\theoremstyle{plain}
\newtheorem{lemmasec}{Lemma}[section]
\newtheorem{theoremsec}{Theorem}[section]
\newtheorem{corollarysec}{Corollary}[section]
\newtheorem{theorem}{Theorem}
\newtheorem{corollary}{Corollary}
\newtheorem{assumption}{Assumption}
\newtheorem{lemma}{Lemma}
\newtheorem{definition}{Definition}
\newtheorem{example}{Example}
\renewcommand{\gg}{\textbf{g}}
\newcommand{\Exp}{\mathbb{E}}
\newcommand{\R}{\mathbb{R}}
\newcommand{\N}{\mathbb{N}}
\newcommand{\dotprod}[2]{\left\langle #1,#2 \right\rangle}
\newcommand{\norms}[1]{\left\| #1 \right\|}
\newcommand{\expect}[1]{\mathbb{E}\left[ #1 \right]}
\newcommand{\algname}[1]{{\sf  #1}\xspace}
\newcommand{\Prob}[1]{\mathbb{P} \left[ #1\right]}
\def\obf{\mathds{1}}
\newcommand{\circledOne}{\text{\ding{172}}}
\newcommand{\circledTwo}{\text{\ding{173}}}
\newcommand{\circledThree}{\text{\ding{174}}}
\newcommand{\circledFour}{\text{\ding{175}}}
\newcommand{\circledFive}{\text{\ding{176}}}
\newcommand{\circledSix}{\text{\ding{177}}}
\def  \R {\mathbb R}
\def\tsc#1{\csdef{#1}{\textsc{\lowercase{#1}}\xspace}}
\begin{document}

\let\WriteBookmarks\relax
\def\floatpagepagefraction{1}
\def\textpagefraction{.001}

\shorttitle{Gradient-free algorithm for saddle point problems under overparametrization}

\shortauthors{E.~Statkevich et al.}

\title [mode = title]{Gradient-free algorithm for saddle point problems under overparametrization}                      



%
\author[1]{Ekaterina Statkevich }[orcid=0009-0002-7775-5762]
\fnmark[1]
\ead{statkevichk@bk.ru}

\author[1]{Sofiya Bondar }[orcid=0009-0009-1853-5656]
\fnmark[1]
\ead{sonbondar@yandex.ru}

\author[2]{Darina Dvinskikh }[orcid=0000-0003-1757-1021]
\ead{dmdvinskikh@hse.ru}

\author[1, 3, 4]{Alexander Gasnikov }[orcid=0000-0002-7386-039X]
\ead{gasnikov@yandex.ru}

\author[1, 3, 4]{Aleksandr Lobanov }[orcid=0000-0003-1620-9581]
\ead{lobbsasha@mail.ru}

\affiliation[1]{organization={Moscow Institute of Physics and Technology, Department of Applied Mathematics and Computer Science},
    addressline={Institutsky lane 9}, 
    city={Dolgoprudny},
    postcode={141701}, 
    country={Russian Federation}}

\affiliation[2]{organization={HSE University},
    addressline={Myasnitskaya st. 20}, 
    city={Moscow},
    postcode={101000}, 
    country={Russian Federation}}

\affiliation[3]{organization={Skolkovo Institute of Science and Technology
},
    addressline={Bolshoy Boulevard, 30, p.1}, 
    city={Moscow},
    postcode={121205}, 
    country={Russian Federation}}

\affiliation[4]{organization={ISP RAS Research Center for Trusted Artificial Intelligence},
    addressline={Alexander Solzhenitsyn st. 25}, 
    city={Moscow},
    postcode={109004}, 
    country={Russian Federation}}

\fntext[fn1]{Equal contribution.}

\begin{abstract}
This paper focuses on solving a stochastic saddle point problem (SPP) under an overparameterized regime for the case, when the gradient computation is impractical. As an intermediate step, we generalize Same-sample Stochastic Extra-gradient algorithm \cite{gorbunov2022stochastic} to a biased oracle and estimate novel convergence rates.  As the result of the paper we introduce an algorithm, which uses gradient approximation instead of a gradient oracle.
We also conduct an analysis to find the maximum admissible level of adversarial noise and the optimal number of iterations at which our algorithm can guarantee achieving the desired accuracy.
\end{abstract}



\begin{keywords}
saddle point problem \sep gradient-free oracle \sep stochastic algorithm \sep bounded noise \sep overparametrization
\end{keywords}

\maketitle

\section{Introduction} \label{sec:Introduction}
The situation when the gradient of the objective function is not available for any reason is actively studied in the literature of optimization for machine learning. This class of problems is often referred to as a black-box optimization problem \cite{kimiaei2022efficient,conn2009introduction}, where the black-box itself is a zero-order oracle \cite{rosenbrock1960automatic}. There are a number of works that focus on the applied setting of the problem, proposing gradient-free optimization algorithms as a solution to a narrower class of black-box optimization problem. For example, the works \cite{Patel_2022,Alashqar_2023} propose gradient-free convex optimization algorithms in the federated learning setting. In particular, \cite{Alashqar_2023} use the concept of an inexact oracle, i.e., a zero-order oracle outputs the function value at the requested point with bounded noise $\Tilde{f} = f(z,\xi) + \delta(z)$. Other works \cite{Gasnikov2023highly,sadykov2023gradient} focus on min and min-max problems, respectively, assuming that the objective function satisfies the Polyak-Lojasiewicz condition. The next few works \cite{akhavan2021distributed,lobanov2023non} consider the setting of distributed learning. And also the authors of \cite{lobanov2023accelerated} proposed a gradient-free algorithm for convex stochastic optimization problem under overparameterization condition.

Typically, perhaps the most common way to create such gradient-free algorithms among theoretical works is to take advantage of first-order algorithms \cite{gasnikov2022randomized}. The basic idea of such a technique is to use a gradient approximation instead of the true gradient of the function in a first-order algorithm. For example, the following first-order algorithms have been used to create the above gradient-free algorithms \cite{woodworth2021min,ajalloeian2020convergence,yang2020global,sayin2017stochastic,kovalev2021lower,woodworth2021even}. Since gradient approximations have not only bounded variance but also bias, we need to base the creation of a gradient-free algorithm on a biased first-order algorithm, i.e., with a biased gradient oracle.

When creating gradient-free algorithms, authors usually consider three optimality criteria: the number of sequential iterations $N$, the number of calls to the zero-order oracle $T$, and finally, the maximum noise level $\Delta$ at which the desired accuracy $\varepsilon$ can still be achieved. It is not hard to see that the first two criteria are also standard for first-order optimization algorithms, while the third criterion is some specificity for zero-order algorithms. There are many motivations for finding the maximum noise level, e.g., $|\delta (z)|\leq \Delta$, from resource conservation (as the noise level in the oracle increases, the cost of accessing it decreases) to cofidentiality (inability to communicate exact data due to company secrecy). As the work \cite{Gasnikov_2022_ICML} shows to achieve optimal estimates on iteration and oracle complexities, it is sufficient to take as a base an accelerated batched first-order algorithm in the desired setting, but the estimate on the maximum noise level depends on the analysis of variance and bias of the chosen gradient approximation. But already work \cite{lobanov2023black} has shown that by using the effect of overbatching, it is possible to improve the maximum noise level estimate (or for a fixed noise level, to improve the error floor or the asymptote to which the algorithm converges) while maintaining the optimal iterative complexity, but sacrificing the optimal oracle complexity.

In our work, we focus on solving the saddle point optimization problem \cite{beznosikov2023smooth}, assuming that the overparameterization condition is satisfied. We choose the Stochastic Extragradient algorithm from \cite{gorbunov2022stochastic} as the first-order algorithm on which we base our new gradient-free algorithm. However, the first-order algorithm in \cite{gorbunov2022stochastic} uses the exact gradient as the gradient oracle, then in this paper we generalize the algorithm to the case with a biased gradient oracle by providing new convergence estimates for different sample settings: Uniform Sampling setup and Importance Sampling setup. Using oracles with different types of adversarial noise (deterministic and stochastic), we provide convergence estimates for a new gradient-free algorithm (\algname{ZOS-SEG}, see Algorithm~\ref{alg:zosseg}). As a competitor, we consider the following works \cite{sadiev2021zeroth,dvinskikh2022noisy,sadykov2023gradient}. We perform comparisons of our algorithm with others on a numerical experiment.

\subsection{Contributions}
\begin{itemize}
    \item \textbf{Generalized analysis of S-SEG.} We generilized S-SEG algorithm to a biased oracle, which is applicable to stochastic optimization problems under overparametrization, and derived convergence rates (Theorem \ref{thm:S-SEG_convergence_rate}). We further specify this result via using Uniform Sampling based algorithm (Corollary \ref{cor:S-SEGUS-o-convergence}) and Importance Sampling based algorithm (Corollary \ref{cor:S-SEGIS-o-convergence}). The theoretical superiority of SEG-IS over SEG-US persists when biased oracle is added.
    
    \item \textbf{New gradient-free method.} By substituting gradient oracle for gradient approximation with a gradient-free oracle, we propose Zero-Order Same-sample Stochastic Extragradient algorithm (Algorithm \ref{alg:zosseg}),a gradient-free algorithm for saddle point problems (SPP) under overparametrized conditions. There are two variants of the realisation the algorithm - different sampling processes: Uniform Sampling setup and Importance Sampling setup. 
    We provide the theoretical base for both mentioned samplings, theoretical results are in Table \ref{tab:all_types}. The  distinction between the US setup and IS setup convergence rates is the fact that the estimation for IS setup will be lower due to the exponential term and terms that are quadratically dependent on $L_{\max}$, as in most cases $\overline{L}$ is less than $L_{\max}$. 
    
    \item \textbf{Deterministic and stochastic noises setup.} We estimated number of iterations of the algorithm, the smoothing parameter, the batch-size, adversarial noise and the total number of calls to the gradient-free oracle, required to reach $\varepsilon-$accuracy for both deterministic (Theorem \ref{th:zosseg-epsilon-acc}) and stochastic (Theorem \ref{th:zosseg-epsilon-acc-stochnoise}) oracle biases. Results might be seen in Table \ref{tab:convergencies}. By using the   stochastic noise in the definition of a gradient-free oracle, we have achieved a smaller number of terms in the convergence rate estimate of our algorithm. However, if we aim to reduce the noise, the case of deterministic noise is more advantageous.
    
    \item \textbf{Numerical evaluation.} In Section~\ref{sec:Experiments}, we corroborate our theoretical results with experimental testing. We use Uniform Sampling setup for both cases of noise. We compared our algorithm with several other algorithms, used for the solution of SPP and achieved much better convergence rate than mentioned algorithms.

\end{itemize}

\begin{table}[width=.9\linewidth,cols=4,pos=ht] 
\caption{Summary of the convergence rates for Algorithm~\ref{alg:zosseg}. Columns <<Deterministic>> and <<Stochastic>> indicate types of bounded noise added to the objective function $f(z)$. For Uniform Sampling we imply that $ \overline{\mu} = \frac{1}{n}\sum\limits_{i: \mu_i \geq 0} \mu_i + \frac{4}{n}\sum\limits_{i: \mu_i < 0} \mu_i$ and for Importance Sampling $\overline{L} = \frac{1}{n}\sum_{i=1}^n L_i$.}
\label{tab:all_types}
\begin{tabular*}{\tblwidth}{@{} LLLL@{} }
\toprule
\multirow{2}{*}{\makecell{Algorithm\\ distribution}} & \multicolumn{2}{c}{Gradient-free oracle noise} \\\\
 & Deterministic & Stochastic\\
\midrule
Uniform Sampling & \makecell[l]{$\frac{L_{\max}R_0^2}{\overline{\mu}}\exp\left(-\frac{\overline{\mu}N}{L_{\max}} \right)$ \\ $+ \frac{d \sigma_{*}^2}{\overline{\mu}^2 N B} + \frac{d L_{\max}^2 \tau^2}{\overline{\mu}^2 N B}$ \\ $+ \frac{d^2 \Delta^2}{\overline{\mu}^2 N B \tau^2} + \frac{L_{\max}^2 \tau^2}{\overline{\mu}^2} + \frac{d^2 \Delta^2}{\overline{\mu}^2 \tau^2}$} & \makecell[l]{$\frac{L_{\max}R_0^2}{\overline{\mu}}\exp\left(-\frac{\overline{\mu}N}{L_{\max}} \right)$ \\ $+ \frac{d \sigma_{*}^2}{\overline{\mu}^2 N B} + \frac{d L_{\max}^2 \tau^2}{\overline{\mu}^2 N B}$ \\ $+ \frac{d^2 \Delta^2}{\overline{\mu}^2 N B \tau^2}$} \\
\midrule
Importance Sampling & \makecell[l]{$\frac{\overline{L} R_0^2}{\overline{\mu}}\exp\left(-\frac{\overline{\mu}N}{\overline{L}} \right)$ \\ $+ \frac{\overline{L} d \sigma_{*}^2}{\overline{\mu}^2 N B L_{min}} + \frac{d \overline{L}^3 \tau^2}{\overline{\mu}^2 N B L_{min}}$ \\ $+ \frac{\overline{L} d^2 \Delta^2}{\overline{\mu}^2 N B \tau^2 L_{min}} + \frac{\overline{L}^2 \tau^2}{\overline{\mu}^2} + \frac{d^2 \Delta^2}{\overline{\mu}^2 \tau^2}$} & \makecell[l]{$\frac{\overline{L} R_0^2}{\overline{\mu}}\exp\left(-\frac{\overline{\mu}N}{\overline{L}} \right)$ \\ $+ \frac{\overline{L} d \sigma_{*}^2}{\overline{\mu}^2 N B L_{min}} + \frac{d \overline{L}^3 \tau^2}{\overline{\mu}^2 N B L_{min}}$ \\ $+ \frac{\overline{L} d^2 \Delta^2}{\overline{\mu}^2 N B \tau^2 L_{min}}$} \\
\bottomrule
\end{tabular*}
\end{table}

\begin{table}[width=.9\linewidth,cols=4,pos=ht]
\caption{Upper-bounds on all parameters required for convergence rate of Algorithm~\ref{alg:zosseg} on Uniform Sampling to achieve $\varepsilon$-accuracy. $N$ is number of iterations of the algorithm, $\tau$ is the smoothing parameter, $B$ is the batch-size, $\Delta$ is adversarial noise and $T$ is the total number of calls to the gradient-free oracle.}
\label{tab:convergencies}
\begin{tabular*}{\tblwidth}{@{} LLL@{} }
\toprule
\makecell{Parameters} & \multicolumn{2}{c}{Uniform sampling setup} \\\\
 & \makecell{Deterministic} & \makecell{Stochastic}\\
\midrule
N & $\mathcal{O}\left( \frac{L_{\max}}{\overline{\mu}} \ln \left( \frac{R_0^2 L_{\max}}{\varepsilon \overline{\mu}} \right) \right)$ & $\mathcal{O}\left( \frac{L_{\max}}{\overline{\mu}} \ln \left( \frac{R_0^2 L_{\max}}{\varepsilon \overline{\mu}} \right) \right)$\\ 
\midrule
$\tau$ & $  \frac{\overline{\mu}}{L_{\max}} \sqrt{\varepsilon}$ &  $ \max \left\{ \frac{\sigma_{*}}{L_{\max}}, \sqrt{\frac{\varepsilon \overline{\mu}}{L_{\max} d}  \ln \left( \frac{R_0^2 L_{\max}}{\varepsilon \overline{\mu}} \right)}\right\}$ \\
\midrule
$B$   & \scriptsize{$ \mathcal{O}\left( \max\left\{ \frac{d \overline{\mu}}{L_{\max} \ln \left( \frac{R_0^2 L_{\max}}{\varepsilon \overline{\mu}} \right)} \max \left\{ \frac{\sigma_{*}^2}{\varepsilon \overline{\mu}^2 } ,  1 \right\}, 1 \right\} \right) $} & \scriptsize{ $\mathcal{O}\left( \max\left\{ \frac{d \sigma_{*}^2}{\varepsilon \overline{\mu} L_{\max} \ln \left( \frac{R_0^2 L_{\max}}{\varepsilon \overline{\mu}} \right)} , 1 \right\} \right)$} \\
\midrule
$\Delta$ & \tiny{$  \frac{\varepsilon \overline{\mu}^2}{L_{\max} \sqrt{d}} \min \left\{ \max \left\{ 1 , \frac{ \sigma_{*}}{\overline{\mu} \sqrt{\varepsilon}}, \sqrt{\frac{L_{\max} \ln \left( \frac{R_0^2 L_{\max}}{\varepsilon \overline{\mu}} \right)}{d \overline{\mu}}} \right\}, \frac{1}{\sqrt{d}} \right\}$} & \tiny{$ \frac{\sqrt{\varepsilon}  \sigma_{*}}{L_{\max} d} \max \left\{ 1 , \frac{\overline{\mu}^2 \sqrt{\varepsilon} \ln \left( \frac{R_0^2 L_{\max}}{\varepsilon \overline{\mu}} \right)}{ \sigma_{*}  \sqrt{d}}, \sqrt{\frac{ \overline{\mu}^3  \ln \left( \frac{R_0^2 L_{\max}}{\varepsilon \overline{\mu}} \right)}{L_{\max}}},  \sqrt{\frac{ \varepsilon \overline{\mu} L_{\max} \ln \left( \frac{R_0^2 L_{\max}}{\varepsilon \overline{\mu}} \right)}{d}} \right\}$} \\
\midrule
$T$ & $ \mathcal{O}\left(\max \left\{ d, \frac{d \sigma_{*}^2}{\varepsilon \overline{\mu}^2}, \frac{L_{\max}}{\overline{\mu}} \ln \left( \frac{R_0^2 L_{\max}}{\varepsilon \overline{\mu}} \right) \right\} \right)$  & $ \mathcal{O}\left(\max \left\{ \frac{d \sigma_{*}^2}{\varepsilon \overline{\mu}^2}, \frac{L_{\max}}{\overline{\mu}} \ln \left( \frac{R_0^2 L_{\max}}{\varepsilon \overline{\mu}} \right) \right\} \right)$ \\
\bottomrule
\end{tabular*}
\end{table}

\newpage
\subsection{Paper Organization}
This paper has the following structure. In Section \ref{sec:biased_gradient}, we provide a SEG-US and SEG-IS algorithms with a biased gradient oracle in the overparameterization setup. In Section \ref{sec:Main_Result}, we present the main result of this paper for saddle point problems: our algorithm \algname{ZOS-SEG} and deduced estimations on convergence rate for deterministic and stochastic oracle noises. Section \ref{sec:Experiments} presents results of our algorithm in comparison with other methods. While Section \ref{sec:Conclusion} concludes this paper. Detailed proofs are presented in the supplementary materials (Appendix).

\subsection{Notations}
We use $\dotprod{z_1}{z_2}:= \sum_{i=1}^{d} z_{1i} z_{2i}$ to denote standard inner product of $z_1, z_2 \in \mathbb{R}^d$, where $z_{1i}$ and $z_{2i}$ are the $i$-th component of $z_1$ and $z_2$ respectively. We denote Euclidean norm ($l_2$-norm) in~$\mathbb{R}^d$ as $\norms{z} = \| z\|_2 := \sqrt{\dotprod{z}{z}}$.
We use the following notation $B^d(r):=\left\{ z \in \mathbb{R}^d : \| z \| \leq r \right\}$ to denote Euclidean ball ($l_2$-ball) and  $S^d(r):=\left\{ z \in \mathbb{R}^d : \| z \| = r \right\}$ to denote Euclidean sphere. Operator $\mathbb{E}[\cdot]$ denotes full mathematical expectation. We notate $\mathcal{O} (\cdot)$ to hide numerical constants. We use $\obf_{\{condition\}}$ to define indicator function from condition in brackets. 
We use $U[a; b]$ to denote continuous uniform distribution on interval $[a; b]$. We use $\{1..n\}$ to denote all natural numbers from 1 to $n$.

\section{Stochastic gradient-based algorithm with bias} \label{sec:biased_gradient}
In this section, we generilize SEG algorithm~\cite{gorbunov2022stochastic} to a biased gradient oracle. We also refer to our problem in the following form of a variational inequality to be more implicit about our results:

\begin{equation}\label{eq:variational-ineq-form}
    \text{Find } z^* \in \R^d \text{, such that } F(z^*) = 0,
\end{equation}
where operator $F: \mathbb{R}^d \rightarrow \mathbb{R}^d $ is a monotone operator, expressed as a finite sum, $F(z)~=~\frac{1}{n}\sum_{i=1}^n F_i(z)$ or more generally as expectation $F(z) = \Exp_{\xi \sim \mathcal{D}} \left[F_{\xi}(z)\right]$. In the case of gradient oracle for SPP operator  $F$ is defined as: $F_{\xi}(z) = \left[ \nabla_x f(x, y, \xi); -\nabla_y f(x, y, \xi)\right]$, where $z = (x, y)$.\\
Firstly, we need to introduce several definitions and assumptions.

\subsection{Preliminaries}
\begin{definition}[Biased oracle]\label{def:b_oracle}
In general, biased oracle $\gg_{\xi}(z)$ with \textit{bounded} noise in a stochastic problem may be viewed as
\begin{equation*}
    \gg_{\xi}(z) := \gg(z,\xi) = F_{\xi}(z) + b(z),
\end{equation*}
where $F_{\xi}: \R^d \rightarrow \mathbb{R}^d$ is a stochastic operator, $\xi$ is a random variable and $b: \R^d \rightarrow \R^d$ such that there exists $\zeta > 0:$ for~all $z \in \R^d:$ $\|b(z)\|^2 \leq \zeta^2$.
\end{definition}

As we are considering a general case, we need to introduce some assumptions on operator $F_{\xi}$. When the operator $F_{\xi}$ is monotone, it is known that the standard gradient method
does not converge without strong monotonicity~\citep{noor2003new} or cocoercivity~\citep{chen1997convergence}. Referring to \citep{gorbunov2022stochastic} we have two main assumptions on operator~$F_{\xi}$.
\begin{assumption}[Lipschitzness]\label{as:lipschitzness}
For~all $\xi$ there exists $L_\xi > 0$ such that operator $F_\xi(z)$ is $L_\xi$-Lipschitz, i.e., for~all $z_1, z_2\in \R^d:$
	\begin{equation*}
		\|F_\xi(z_1) - F_\xi(z_2)\| \le L_\xi\|z_1 - z_2\|.
	\end{equation*}
\end{assumption}
This assumption is widely used in the literature on variational inequalities problems.\\
The next assumption can be considered as a relaxation of standard strong monotonicity allowing $F_\xi(z)$ to be non-monotone with a certain structure.

\begin{assumption}[Strong monotonicity]\label{as:str_monotonicity}
	We assume that for~all $\xi$ operator $F_\xi(z)$ is $(\mu_{\xi},z^*)$-strongly monotone, i.e., there exists \textbf{(possibly negative)} $\mu_\xi \in \R$ such that for~all $z\in \R^d:$
	\begin{equation*}
		\langle F_\xi(z) - F_\xi(z^*), z - z^*\rangle \ge \mu_\xi\|z - z^*\|^2,
	\end{equation*}
 where $z^*$ is the solution of~\eqref{eq:variational-ineq-form}.
\end{assumption}

We emphasize that in some cases $\mu_{\xi}$ might be negative, which can lead to operator $F_\xi$ being non-monotone.

\subsection{Stochastic gradient-based algorithm with bias}

To analyze the convergence of \algname{SEG}, we consider a family of methods
\begin{align}
z^{k+\frac{1}{2}} &= z^k - \gamma_{1, \xi^k} \mathbf{g_{\xi^k}^{k}}, \nonumber \\
z^{k+1} &= z^k - \gamma_{2, \xi^k} \mathbf{g_{\xi^k}^{k+\frac{1}{2}}}, \label{eq:general_seg_method}
\end{align}
where $\gg_{\xi^k}^k := \gg_{\xi^k}(z^k)$ is some stochastic operator evaluated at point $z^k$ and $\xi^k$ encodes the~randomness~/~stochasticity appearing at iteration $k$ (e.g., it can be the sample used at step $k$). The choice of stepsizes varies across previous articles on this topic~\citep{mishchenko2020revisiting}. In our work we propose selecting stepsizes $\gamma_{1, \xi^k}, \gamma_{2, \xi^k}$ so that they will satisfy $0 < \gamma_{2, \xi^k} = \alpha \gamma_{1, \xi^k}$, where $0 < \alpha < 1$. The stepsizes are also allowed to depend on the same sample $\xi^k$.\\
Applying the fact, that we now have a biased oracle (Definition \ref{def:b_oracle}), we substitute $ \mathbf{g_{\xi^k}^{k}}$ so that Same sample \algname{SEG} will be presented in the following form:
\begin{align}
    z^{k+\frac{1}{2}} = z^k - \gamma_{1,\xi^k} F_{\xi^k}(z^k) - \gamma_{1,\xi^k} b(z^k), \nonumber \\
    z^{k+1} = z^k - \gamma_{2,\xi^k} F_{\xi^k}(z^{k+\frac{1}{2}}) - \gamma_{2,\xi^k} b(z^{k+\frac{1}{2}}),   \tag{S-SEG} \label{eq:S_SEG}
\end{align}
where $b(z^k): \R^d \rightarrow \mathbb{R}^d$ is a function, with a norm, bounded with $\zeta$ (see Definition \ref{def:b_oracle}). In each iteration the same sample $\xi^k$ is used for the exploration (computation of $z^{k+\frac{1}{2}}$) and update (computation of $z^{k+1}$) steps.\\
Let us now introduce the main assumptions on operator $\gg_{\xi}(z)$ (Definition \ref{def:b_oracle}) and stepsizes from \ref{eq:S_SEG} update rule which are the extensions of \cite[Assumption 2.1]{gorbunov2022stochastic}.

\begin{assumption}\label{as:unified_assumption_general}
	We assume that there exist non-negative constants $A, C, D_1, D_2, E_1, E_2, H_1, H_2, J, \zeta \geq~0,$ $ \rho \in [0; 1]$, and \textbf{(possibly random)} non-negative sequence $\{G_k\}_{k\ge 0}$ such that :
	\begin{equation}
	\label{eq:second_moment_bound_general} \Exp_{\xi^k}\left[\gamma_{2,\xi^k}^2\|\mathbf{g^{k+\frac{1}{2}}}\|^2\right]\leq 2A P_{k} + C\|z^k - z^*\|^2 + D_1 + E_1\zeta^2 - H_1 \zeta R,
	\end{equation}
		\begin{equation}
		\label{eq:P_k_general}
 P_{k} \geq \rho\|z^k - z^*\|^2 + JG_k - D_2 - E_2\zeta^2 + H_2 \zeta R,
	\end{equation}
	where $\mathbf{g^{k+\frac{1}{2}}} := \mathbf{g}_{\xi^k}(z^{k+\frac{1}{2}})$ is stochastic operator~\eqref{eq:general_seg_method}, $\gamma_{2,\xi^k}$ is a stepsize~(from \ref{eq:S_SEG} update rule),\\$z^*$ is the solution of~\eqref{eq:variational-ineq-form}, $R = \max_{i\in \{1..n\}} \| z^i - z^*\|$ and $P_{k} = \Exp_{\xi^k}\left[\gamma_{2,\xi^k}\langle \mathbf{g^{k+\frac{1}{2}}}, z^k - z^* \rangle\right]$.
\end{assumption}

Although those inequalities may seem unnatural, they are satisfied with certain parameters for several variants of \algname{S-SEG} under reasonable assumptions on the problem and the stochastic noise.\\
Finally, we need to imply the last assumption.
\begin{assumption}\label{as:stepsize_and_mu_conditions}
Operator $F_{\xi^k}$, stepsize $\gamma_{1,\xi^k}$ (from \ref{eq:S_SEG} update rule) and $\mu_{\xi^k}$ (from Assumption \ref{as:str_monotonicity}) satisfy the following conditions:
    \begin{equation}
        \Exp_{\xi^k}[\gamma_{1,\xi^k}F_{\xi^k}(z^*)] = 0, \label{eq:S_SEG_AS_stepsize_1}
    \end{equation}
    \begin{equation}
        \Exp_{\xi^k}[\gamma_{1,\xi^k}\mu_{\xi^k}(\obf_{\{\mu_{\xi^k} \ge 0\}} + 4\cdot\obf_{\{\mu_{\xi^k} < 0\}})] \ge 0, \label{eq:S_SEG_AS_stepsize_and_mu}
    \end{equation}
where $z^*$ is the solution of~\eqref{eq:variational-ineq-form}.
\end{assumption}

Equation~\ref{eq:S_SEG_AS_stepsize_1} is a generalization of unbiasedness at $z^*$, since $F(z^*) = 0$, and the left-hand side of \eqref{eq:S_SEG_AS_stepsize_and_mu} is a generalization of the averaged quasi-strong monotonicity constant multiplied by the stepsize. Moreover, \eqref{eq:S_SEG_AS_stepsize_and_mu} holds when all $\mu_\xi \ge 0$, which is typically assumed in the analysis of \algname{S-SEG}.\\\\
We want to present some specific distributions in \algname{S-SEG} algorithm, and for starters we assume that $F(z) = \frac{1}{n}\sum_{i=1}^n F_i(z)$ and $F_i(z)$ is $(\mu_i, z^*)$-strongly monotone and $L_i$-Lipschitz.

\begin{example}[Uniform Sampling, example from \citep{gorbunov2022stochastic}]\label{ex:uniform_sampling}
    Let $\xi^k$ be sampled from the uniform distribution $U[0; n]$, i.e., for all $i\in \{1..n\}$ we have $\Prob{\xi^k = i} = p_i \equiv \frac{1}{n}$. If
    \begin{equation}\label{eq:average_mu}
        \overline{\mu} = \frac{1}{n}\sum\limits_{i: \mu_i \geq 0} \mu_i + \frac{4}{n}\sum\limits_{i: \mu_i < 0} \mu_i \ge 0
    \end{equation}
    and $\gamma_{1,\xi} \equiv \gamma > 0$, then Assumption~\ref{as:stepsize_and_mu_conditions} holds.
\end{example}

In the example above the oracle is unbiased and, as the result, we use constant stepsize $\gamma_{1,\xi} = \gamma$. This leads us to our goal - to specify the mentioned example to a biased oracle. Moreover, we emphasize that to fulfill Assumption~\ref{as:stepsize_and_mu_conditions} in Example~\ref{ex:uniform_sampling} we only need to assume that parameter $\gamma$ is positive.
We also provide an example of a non-uniform sampling.
\begin{example}[Importance Sampling, example from \citep{gorbunov2022stochastic}]\label{ex:importance_sampling}
    Let $\xi^k$ be sampled from the following distribution:\\for all $i\in \{1..n\}$
    \begin{equation}
        \Prob{\xi^k = i} = p_i = \frac{L_i}{\sum\limits_{j=1}^n L_j}. \label{eq:importance_sampling_prob_main}
    \end{equation}
    If \eqref{eq:average_mu} is satisfied and $\gamma_{1,\xi} = \frac{\gamma \overline{L}}{L_\xi}$, where $\overline{L} = \frac{1}{n}\sum_{i=1}^n L_i$, $\gamma > 0$, then Assumption~\ref{as:stepsize_and_mu_conditions} holds.
\end{example}

\subsection{Convergence rate of S-SEG with biased oracle}

Then we need to undertake some preparatory steps to derive the convergence rate of \algname{S-SEG} algorithm with a biased oracle. If we assume, that Assumptions~\ref{as:lipschitzness},~\ref{as:str_monotonicity}~and~\ref{as:stepsize_and_mu_conditions}~hold, and that stepsize $\gamma_{1,\xi^k}$ from \algname{S-SEG} update rule satisfies the following inequality:
\begin{equation}
	    \gamma_{1,\xi^k} \leq \frac{1}{4|\mu_{\xi^k}| + 2L_{\xi^k}},\label{eq:S_SEG_AS_stepsize_upper_bound}
\end{equation}
then we are able to show that  Assumption~\ref{as:unified_assumption_general} holds (see Appendix for this derivation) for \algname{S-SEG}. Now we can explicitly write down unknown variables from Assumption~\ref{as:unified_assumption_general} and derive convergence guarantees for \algname{S-SEG} with biased oracle.

\begin{theorem}\label{thm:S-SEG_convergence_rate}Let Assumptions~\ref{as:lipschitzness}~and~\ref{as:str_monotonicity} hold. If stepsizes from \ref{eq:S_SEG} update rules satisfy $\gamma_{2,\xi^k} = \alpha \gamma_{1,\xi^k}$, where $\alpha > 0$, and $\gamma_{1,\xi^k}$ satisfies Assumption~\ref{as:stepsize_and_mu_conditions} and \eqref{eq:S_SEG_AS_stepsize_upper_bound}, then $\mathbf{g^{k+\frac{1}{2}}}$ from~\ref{eq:S_SEG} satisfies Assumption~\ref{as:unified_assumption_general}. \\\\
If additionally $\alpha \leq \frac{1}{8}$ and  $ \rho > 0$ ($\rho$ is defined in Assumption \ref{as:unified_assumption_general}), then for all $N > 0$ (number of steps for \ref{eq:S_SEG} algorithm with the output $z^{N}$): 
	\begin{equation*}
		\Exp\left[\|z^{N+1} - z^*\|^2\right] \leq \left(1 - \rho\right)\Exp\left[\|z^N - z^*\|^2\right] + \frac{5\alpha(8\alpha + 1)}{2}\sigma_{\algname{AS}}^2 + \frac{\alpha(20\alpha + 1)}{2} \widehat{\gamma_2} \zeta^2 - \alpha(8\alpha + 1) \widehat{\gamma_1}\zeta R,
	\end{equation*}
where $z^*$ is the solution of~\eqref{eq:variational-ineq-form}, $\sigma_{\algname{AS}}^2 = \Exp_\xi\left[\gamma_{1, \xi}^2\|F_\xi(z^*)\|^2\right]$, $\widehat{\gamma_1} = \Exp_\xi\left[\gamma_{1,\xi^k}\right] $ and $\widehat{\gamma_2} = \Exp_\xi\left[\gamma_{2,\xi^k}\right]$,\\$\zeta > 0$ is the upper-bound for bias norm (see Definition \ref{def:b_oracle}) and $R =\max_{i\in \{0..N\}}\|z^i - z^*\|.$ \\\\
The proof of Theorem \ref{thm:S-SEG_convergence_rate} can be found in supplementary materials (Appendix~\ref{proof:S-SEG_convergence_rate_proof}).
 
\end{theorem}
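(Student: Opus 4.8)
The plan is to prove the statement in two stages that mirror its two parts. In the first stage I show that the point $\mathbf{g}^{k+\frac{1}{2}}$ generated by \ref{eq:S_SEG} satisfies the two abstract inequalities \eqref{eq:second_moment_bound_general} and \eqref{eq:P_k_general} of Assumption~\ref{as:unified_assumption_general}, exhibiting explicit values of $A,C,D_1,D_2,E_1,E_2,H_1,H_2,J,\rho$ built from $\alpha$, $\widehat{\gamma_1}$, $\widehat{\gamma_2}$, $\sigma_{\algname{AS}}^2$ and the averaged monotonicity constant $\overline{\mu}$ of \eqref{eq:average_mu}; this is exactly the first conclusion. In the second stage I substitute these inequalities into the one-step identity for $\|z^{k+1}-z^*\|^2$ and collect terms to obtain the claimed recursion. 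Throughout, the relations $\gamma_{2,\xi^k}=\alpha\gamma_{1,\xi^k}$ and the stepsize bound \eqref{eq:S_SEG_AS_stepsize_upper_bound} keep every coefficient controlled, while Assumption~\ref{as:stepsize_and_mu_conditions} annihilates the cross terms containing $F_{\xi^k}(z^*)$ (via \eqref{eq:S_SEG_AS_stepsize_1}) and supplies the averaged quasi-strong monotonicity (via \eqref{eq:S_SEG_AS_stepsize_and_mu}), including the case of negative $\mu_\xi$, which is absorbed by the $1$-versus-$4$ weighting.

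\textbf{Stage 1.} Write $\mathbf{g}^k=F_{\xi^k}(z^k)+b(z^k)$ and $\mathbf{g}^{k+\frac{1}{2}}=F_{\xi^k}(z^{k+\frac{1}{2}})+b(z^{k+\frac{1}{2}})$, and use $z^{k+\frac{1}{2}}-z^k=-\gamma_{1,\xi^k}\mathbf{g}^k$. For \eqref{eq:second_moment_bound_general} I split $\|\mathbf{g}^{k+\frac{1}{2}}\|^2$ by Young's inequality into an operator part and a bias part $\|b(z^{k+\frac{1}{2}})\|^2\le\zeta^2$; the operator part is transferred from $z^{k+\frac{1}{2}}$ to $z^k$ via Assumption~\ref{as:lipschitzness}, which creates terms proportional to $\gamma_{2,\xi^k}^2 L_{\xi^k}^2\gamma_{1,\xi^k}^2\|\mathbf{g}^k\|^2$, i.e.\ fourth order in the stepsize, which \eqref{eq:S_SEG_AS_stepsize_upper_bound} immediately demotes to ordinary second-order terms since it forces $\gamma_{1,\xi^k}L_{\xi^k}\le\tfrac{1}{2}$; then from $z^k$ to $z^*$ using $F_{\xi^k}(z^*)$ as a pivot, which after taking $\Exp_{\xi^k}[\cdot]$ produces $\sigma_{\algname{AS}}^2$. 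The quantity $P_k$ appears on the right-hand side of \eqref{eq:second_moment_bound_general}, exactly as in \cite{gorbunov2022stochastic}, because the expansion contains $\gamma_{2,\xi^k}\langle\mathbf{g}^{k+\frac{1}{2}},z^k-z^*\rangle$, which I retain rather than discard. For \eqref{eq:P_k_general} I use the extragradient decomposition $\langle\mathbf{g}^{k+\frac{1}{2}},z^k-z^*\rangle=\langle\mathbf{g}^{k+\frac{1}{2}},z^{k+\frac{1}{2}}-z^*\rangle+\gamma_{1,\xi^k}\langle\mathbf{g}^{k+\frac{1}{2}},\mathbf{g}^k\rangle$, lower-bound the first inner product by Assumption~\ref{as:str_monotonicity} together with \eqref{eq:S_SEG_AS_stepsize_and_mu} after averaging, and control the second by Cauchy--Schwarz and Assumption~\ref{as:lipschitzness}. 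The bias enters in two ways: quadratic terms $\|b(\cdot)\|^2\le\zeta^2$ that populate the $E_1\zeta^2$ and $E_2\zeta^2$ slots, and linear cross terms $\langle b(\cdot),z^k-z^*\rangle$, which are bounded by $\zeta\|z^k-z^*\|\le\zeta R$ and thus produce precisely the $-H_1\zeta R$ and $+H_2\zeta R$ terms. Matching the two inequalities against the required form gives the constants; in particular $A$ is a fixed multiple of $\alpha$, which is why $\alpha\le\tfrac{1}{8}$ in the second part is exactly the condition $A\le1$.

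\textbf{Stage 2.} Start from $\|z^{k+1}-z^*\|^2=\|z^k-z^*\|^2-2\gamma_{2,\xi^k}\langle\mathbf{g}^{k+\frac{1}{2}},z^k-z^*\rangle+\gamma_{2,\xi^k}^2\|\mathbf{g}^{k+\frac{1}{2}}\|^2$, take $\Exp_{\xi^k}[\cdot]$ to get $\Exp_{\xi^k}[\|z^{k+1}-z^*\|^2]=\|z^k-z^*\|^2-2P_k+\Exp_{\xi^k}[\gamma_{2,\xi^k}^2\|\mathbf{g}^{k+\frac{1}{2}}\|^2]$, and insert \eqref{eq:second_moment_bound_general}. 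The $2AP_k$ merges with $-2P_k$ into $-2(1-A)P_k$; since $\alpha\le\tfrac{1}{8}$ makes $1-A\ge0$, this $P_k$ may now be lower-bounded by \eqref{eq:P_k_general}. Because $J\ge0$ and $G_k\ge0$, the term $-2(1-A)JG_k\le0$ is discarded; the coefficient of $\|z^k-z^*\|^2$ becomes $1+C-2(1-A)\rho$, which the Stage~1 constants render $\le1-\rho$; and the additive terms collect into $D_1+2(1-A)D_2$, $(E_1+2(1-A)E_2)\zeta^2$ and $-(H_1+2(1-A)H_2)\zeta R$. Substituting the explicit constants from Stage~1 and taking full expectation gives
\[
\Exp\left[\|z^{N+1}-z^*\|^2\right]\le(1-\rho)\Exp\left[\|z^N-z^*\|^2\right]+\tfrac{5\alpha(8\alpha+1)}{2}\sigma_{\algname{AS}}^2+\tfrac{\alpha(20\alpha+1)}{2}\widehat{\gamma_2}\zeta^2-\alpha(8\alpha+1)\widehat{\gamma_1}\zeta R.
\]

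\textbf{Main obstacle.} The step I expect to be hardest is the coefficient bookkeeping in Stage~1: one must track the sign of every bias-induced cross term so that the $\zeta R$ contributions emerge with coefficients that are mutually consistent between \eqref{eq:second_moment_bound_general} and \eqref{eq:P_k_general}, and one must check that \eqref{eq:S_SEG_AS_stepsize_upper_bound} is tight enough to absorb all the fourth-order-in-$\gamma_1$ terms coming from the extrapolation step, so that the resulting contraction factor is genuinely $1-\rho$ with $\rho>0$, and that the threshold $\alpha\le\tfrac{1}{8}$ is indeed what makes $A\le1$ and $1+C-2(1-A)\rho\le1-\rho$ hold at the same time. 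A minor additional point is that $R=\max_i\|z^i-z^*\|$ is a running maximum rather than an adapted quantity, but since it enters only with a negative sign the recursion is unaffected, and if a clean geometric inequality is wanted one may simply drop the $-\zeta R$ terms. Everything else reduces to routine applications of Young's and Cauchy--Schwarz inequalities.
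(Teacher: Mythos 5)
Your plan follows the same architecture as the paper's proof: two lemmas verifying the two inequalities of Assumption~\ref{as:unified_assumption_general} for \ref{eq:S_SEG} with explicit constants (the paper's Lemmas~\ref{lem:second_moment_bound_S_SEG} and~\ref{lem:P_k_bound_S_SEG}), followed by the one-step recursion (Lemma~\ref{lem:main_lemma_on_ass3}). However, as written it does not deliver the theorem's literal constants. In Stage~2 you keep $-2(1-A)P_k$ and push the lower bound \eqref{eq:P_k_general} through with the full weight $2(1-A)$; with the paper's instantiation $A=4\alpha$, $D_1=20\alpha^2\sigma_{\algname{AS}}^2$, $D_2=\tfrac52\alpha\sigma_{\algname{AS}}^2$, $E_1=10\alpha^2\widehat{\gamma_2}$, $E_2=\tfrac{\alpha}{2}\widehat{\gamma_2}$, $H_1=8\alpha^2\widehat{\gamma_1}$, $H_2=\alpha\widehat{\gamma_1}$, your combinations $D_1+2(1-A)D_2=5\alpha\sigma_{\algname{AS}}^2$ and $E_1+2(1-A)E_2=\alpha(6\alpha+1)\widehat{\gamma_2}$ are strictly larger than the stated $\tfrac{5\alpha(8\alpha+1)}{2}\sigma_{\algname{AS}}^2$ and $\tfrac{\alpha(20\alpha+1)}{2}\widehat{\gamma_2}$ whenever $\alpha<\tfrac18$ (they coincide only at $\alpha=\tfrac18$, where $2(1-A)=1$), so your recursion is weaker than the claimed one and does not imply it. The paper instead uses $A\le\tfrac12$ to bound $-2P_k+2AP_k\le -P_k$ and then applies \eqref{eq:P_k_general} with unit weight, which is exactly where the sums $D_1+D_2$, $E_1+E_2$, $H_1+H_2$ in the statement come from; correspondingly, the role of $\alpha\le\tfrac18$ is $A=4\alpha\le\tfrac12$, not ``$A\le1$'' as you assert (and note the paper's step silently uses $P_k\ge0$, which your variant avoids, but at the price of the constant mismatch).

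There is also a concrete risk in Stage~1. For \eqref{eq:P_k_general} you propose to control the cross term $\gamma_{1,\xi^k}\langle\mathbf{g}^{k+\frac12},\mathbf{g}^k\rangle$ by Cauchy--Schwarz, but the resulting factor $\|\mathbf{g}^{k+\frac12}\|$ has no admissible slot on the right-hand side of \eqref{eq:P_k_general}. The paper instead applies the polarization identity \eqref{eq:inner_product_representation} to $\langle F_{\xi^k}(z^{k+\frac12})-F_{\xi^k}(z^*),\mathbf{g}^k\rangle$, so that the $-\tfrac12\|\mathbf{g}^k\|^2$ term and the Lipschitz-controlled difference recombine into the non-negative quantity $\widehat{G}_k$ with factor $\bigl(1-4|\mu_{\xi^k}|\gamma_{1,\xi^k}-4L_{\xi^k}^2\gamma_{1,\xi^k}^2\bigr)$ — this is precisely where the stepsize bound \eqref{eq:S_SEG_AS_stepsize_upper_bound} is consumed and where $J=\tfrac12$ comes from. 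Similarly, in the second-moment bound the appearance of $P_k$ is not just ``retaining the inner product'': the paper derives an auxiliary upper bound on $\Exp_{\xi^k}\bigl[\|z^k-\gamma_{1,\xi^k}\mathbf{g}^{k+\frac12}-z^*\|^2\bigr]$ whose right-hand side contains only $\tfrac34\Exp_{\xi^k}\bigl[\gamma_{1,\xi^k}^2\|\mathbf{g}^{k+\frac12}\|^2\bigr]$, and then subtracts the exact identity so the remaining $\tfrac14$ of the second moment can be solved for; without that strict-contraction coefficient the rearrangement fails. Both issues are repairable, but your plan as stated proves a statement with different (worse for $\alpha<\tfrac18$) constants and leaves the decisive Stage-1 estimates unestablished.
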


It is noticeable, that this result differs from the case of unbiased oracle only with coefficient in the middle term and with addition of the last two terms, with quadratic and linear dependence from the noise level accordingly.

\subsection{Convergence rates comparison in Uniform Sampling setup and Importance Sampling setup}

The result obtained above can be specified on the Uniform Sampling (see Example~\ref{ex:uniform_sampling}) when we substitute abstract distribution $\mathcal{D}$ for $U[0; n]$ in \eqref{eq:variational-ineq-form} and as a direct corollary of Theorem~\ref{thm:S-SEG_convergence_rate} we can easily derive convergence rate for diminishing stepsize.

\begin{corollary}[\algname{S-SEG} with Uniform Sampling setup]\label{cor:S-SEGUS-o-convergence}
Consider the setup from Example~\ref{ex:uniform_sampling} with $\overline{\mu} > 0$. Let stepsizes from \ref{eq:S_SEG} update rules satisfy $\gamma_{2,\xi^k} = \alpha \gamma_{1,\xi^k}$, where $\alpha = \frac{1}{8}$, and stepsize $\gamma_{1,\xi^k} = \beta_k\gamma = \frac{\beta_k}{6L_{\max}}$, where $L_{\max} = \max_{i\in \{1..n\}}L_i$ and $0 < \beta_k \leq 1$.\\Then for all $N > 0$ (number of steps for \ref{eq:S_SEG} algorithm with the output $z^{N}$):
        \begin{equation*}
	    \Exp\left[\|z^N - z^*\|^2\right] = \mathcal{O}\left( \frac{L_{\max}R_0^2}{\overline{\mu}}\exp \left( -\frac{\overline{\mu} N}{L_{\max}} \right) + \frac{\sigma_{\algname{US}*}^2}{\overline{\mu}^2 N} + \frac{\zeta^2}{\overline{\mu}^2}\right),
	\end{equation*} 
where $z^*$ is the solution of~\eqref{eq:variational-ineq-form}, $R_0^2 = \|z^0 - z^*\|^2, \sigma_{\algname{US}*}^2 = \frac{1}{n}\sum_{i=1}^n \|F_i(z^*)\|^2$ , $\overline{\mu}$ is defined in Example~\ref{ex:uniform_sampling}\\and $\zeta > 0$ is the upper-bound for bias norm (see Definition \ref{def:b_oracle}).\\\\
The proof of Corollary \ref{cor:S-SEGUS-o-convergence} can be found in supplementary materials (Appendix~\ref{cor:S-SEGUS-o-convergence-proof}).

\end{corollary}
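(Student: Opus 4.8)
The plan is to specialize Theorem~\ref{thm:S-SEG_convergence_rate} to the Uniform Sampling setup of Example~\ref{ex:uniform_sampling} and then unroll the resulting one-step recursion with the diminishing stepsize schedule $\gamma_{1,\xi^k}=\beta_k/(6L_{\max})$. First I would check that the hypotheses of Theorem~\ref{thm:S-SEG_convergence_rate} are met: in the Uniform Sampling case $\gamma_{1,\xi}\equiv\gamma$ (up to the $\beta_k$ factor) is constant across $\xi$, so \eqref{eq:S_SEG_AS_stepsize_1} and \eqref{eq:S_SEG_AS_stepsize_and_mu} hold by Example~\ref{ex:uniform_sampling} with $\overline{\mu}>0$; the bound \eqref{eq:S_SEG_AS_stepsize_upper_bound} holds because $\gamma=1/(6L_{\max})\le 1/(4|\mu_i|+2L_i)$ (using $|\mu_i|\le L_i$, which follows from Assumptions~\ref{as:lipschitzness} and~\ref{as:str_monotonicity}), and $\alpha=\tfrac18$ satisfies $\alpha\le\tfrac18$. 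The key quantitative point is to identify $\rho$: from the derivation behind Assumption~\ref{as:unified_assumption_general} for S-SEG (Appendix), in the Uniform Sampling setup one gets $\rho = \Theta(\gamma\,\overline{\mu}\,\beta_N) = \Theta(\beta_N\overline{\mu}/L_{\max})$, and similarly $\sigma_{\algname{AS}}^2 = \Theta(\gamma^2\beta_N^2\sigma_{\algname{US}*}^2) = \Theta(\beta_N^2\sigma_{\algname{US}*}^2/L_{\max}^2)$ while $\widehat{\gamma_1},\widehat{\gamma_2}=\Theta(\beta_N/L_{\max})$. Substituting these into the theorem gives a recursion of the shape
\begin{equation*}
\Exp[\|z^{N+1}-z^*\|^2] \le \Bigl(1 - c_1\tfrac{\beta_N\overline{\mu}}{L_{\max}}\Bigr)\Exp[\|z^N-z^*\|^2] + c_2\tfrac{\beta_N^2\sigma_{\algname{US}*}^2}{L_{\max}^2} + c_3\tfrac{\beta_N\zeta^2}{L_{\max}},
\end{equation*}
where I have dropped the nonpositive $-\alpha(8\alpha+1)\widehat{\gamma_1}\zeta R$ term (it only helps) and absorbed the $\zeta R$ cross-term into $\zeta^2$ plus a $\|z^k-z^*\|^2$ contribution via Young's inequality, adjusting $c_1$; alternatively one keeps $R$ explicit and notes it is dominated by the recursion's own iterates.

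Next I would run the standard diminishing-stepsize lemma for such contractions (as in the stochastic (extra)gradient literature). Two regimes: with a constant choice $\beta_k\equiv 1$ one gets linear convergence to a neighborhood, i.e. the $\tfrac{L_{\max}R_0^2}{\overline{\mu}}\exp(-\overline{\mu}N/L_{\max})$ term plus a $\tfrac{\sigma_{\algname{US}*}^2}{\overline{\mu}L_{\max}}$ floor plus $\tfrac{\zeta^2}{\overline{\mu}^2}$; to kill the $\sigma$-floor down to the $O(1/N)$ rate one switches, after the exponential burn-in phase $N/2$, to $\beta_k = \Theta(L_{\max}/(\overline{\mu}k))$ (clipped at $1$). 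The classical computation — splitting the horizon, bounding $\prod(1-c_1\beta_j)$ and the accumulated noise $\sum \beta_k^2 \prod_{j>k}(1-c_1\beta_j)$ — yields $\Exp[\|z^N-z^*\|^2] = O\!\bigl(\tfrac{L_{\max}R_0^2}{\overline{\mu}}e^{-\overline{\mu}N/L_{\max}} + \tfrac{\sigma_{\algname{US}*}^2}{\overline{\mu}^2 N} + \tfrac{\zeta^2}{\overline{\mu}^2}\bigr)$, exactly the claimed bound; the additive $\zeta^2/\overline{\mu}^2$ bias term survives because the linear-in-$\beta_k$ noise contribution $\sum \beta_k\zeta^2\prod_{j>k}(1-c_1\beta_j)$ telescopes to $\Theta(\zeta^2/(c_1\overline{\mu}/L_{\max})\cdot L_{\max}^{-1}) = \Theta(\zeta^2/\overline{\mu}^2)$ up to constants, independent of $N$.

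The main obstacle is bookkeeping rather than conceptual: (i) correctly reading off $\rho$, $\sigma_{\algname{AS}}^2$, $\widehat{\gamma_1}$, $\widehat{\gamma_2}$ for the Uniform Sampling instantiation from the Appendix derivation of Assumption~\ref{as:unified_assumption_general}, so that the $\overline{\mu}$ and $L_{\max}$ powers come out right; and (ii) handling the linear noise term $-\alpha(8\alpha+1)\widehat{\gamma_1}\zeta R$ cleanly — since $R=\max_{i\le N}\|z^i-z^*\|$ couples all iterates, one must either argue monotone-in-expectation control of $R$ or simply discard this nonpositive term and separately bound the $H_1,H_2$ contributions hidden inside $\rho$, $\sigma_{\algname{AS}}^2$ by the $\zeta^2$ term via Young, which is what forces the final $\zeta^2/\overline{\mu}^2$ (and not a smaller power). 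Everything else is the routine telescoping of a contractive recursion with square-summable-but-not-summable steps, so I would state the diminishing-step lemma, verify its hypotheses, and quote the resulting three-term bound.
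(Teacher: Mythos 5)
Your overall route is the same as the paper's: instantiate the one-step recursion of Theorem~\ref{thm:S-SEG_convergence_rate} in the Uniform Sampling setup and unroll it with the two-phase (constant, then $\sim 1/k$) stepsize schedule via a Stich-type diminishing-stepsize lemma --- this is exactly what the paper does through the generic Corollary~\ref{cor:S-SEG-corollary-generic} and Lemma~\ref{lem:stich_lemma}. Discarding the non-positive term $-\alpha(8\alpha+1)\widehat{\gamma_1}\zeta R$ is also what the paper effectively does (it is fed into the $-b\gamma_k$ slot of Lemma~\ref{lem:stich_lemma}), so no Young-type absorption is needed; and your check of \eqref{eq:S_SEG_AS_stepsize_upper_bound} via $|\mu_i|\le L_i$, giving $1/(6L_{\max})\le 1/(4|\mu_i|+2L_i)$, is fine.

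The one concrete flaw is the bookkeeping of the bias term. You write the recursion with a $\zeta^2$-contribution that is linear in the stepsize factor, $c_3\beta_k\zeta^2/L_{\max}$, and then assert the accumulated bias is $\Theta(\zeta^2/\overline{\mu}^2)$; but a recursion $r_{k+1}\le\bigl(1-c_1\tfrac{\beta_k\overline{\mu}}{L_{\max}}\bigr)r_k+c_3\tfrac{\beta_k\zeta^2}{L_{\max}}$ has fixed point $c_3\zeta^2/(c_1\overline{\mu})$, i.e.\ your own computation yields $\Theta(\zeta^2/\overline{\mu})$, not $\zeta^2/\overline{\mu}^2$ (and $\zeta^2/\overline{\mu}$ is also dimensionally inconsistent with a squared distance). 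The source of the slip is $\widehat{\gamma_2}$: in the appendix proofs it is the second moment of the stepsize, $\widehat{\gamma_2}=\Exp[\gamma_{1,\xi}^2]=1/(36L_{\max}^2)$ (the statement of Theorem~\ref{thm:S-SEG_convergence_rate} writes $\Exp[\gamma_{2,\xi^k}]$, which is an inconsistency of the paper), so with $\gamma_{1,\xi^k}=\beta_k/(6L_{\max})$ the bias contribution is quadratic, $\tfrac{7}{32}\beta_k^2\zeta^2\widehat{\gamma_2}\sim\beta_k^2\zeta^2/L_{\max}^2$. Feeding this into Lemma~\ref{lem:stich_lemma} as part of the $c\gamma_k^2$ term together with the $\sigma_{\algname{US}*}^2$ part, with $a=\widetilde{\rho}=\overline{\mu}/(96L_{\max})$, gives a contribution $36c/(a^2N)\lesssim \sigma_{\algname{US}*}^2/(\overline{\mu}^2N)+\zeta^2/\overline{\mu}^2$, which is the claimed bound. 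So your conclusion is correct and the strategy matches the paper, but the scaling of the $\zeta^2$ term must be corrected (quadratic in $\beta_k$, with $1/L_{\max}^2$ rather than $1/L_{\max}$) for the power of $\overline{\mu}$ in the bias floor to come out right.
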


We can also provide an example of \algname{S-SEG} usage on non-uniform distribution. Let us consider Importance Sampling (see Example~\ref{ex:importance_sampling}) with probabilities depending on the Lipschitz constants. The next corollary will show the convergence rate in this particular case, as well as underline the difference between the usage of two mentioned distributions.

\begin{corollary}[\algname{S-SEG} with Importance Sampling setup]\label{cor:S-SEGIS-o-convergence}
Consider the setup from Example~\ref{ex:importance_sampling} with $\overline{\mu} > 0$. Let stepsizes from \ref{eq:S_SEG} update rules satisfy\\ $\gamma_{2,\xi^k} = \alpha \gamma_{1,\xi^k}$, $\alpha = \frac{1}{8}$, and $\gamma_{1,\xi^k} = \frac{\beta_k\gamma \overline{L}}{L_{\xi^k}} = \frac{\beta_k}{6L_{\xi^k}}$, where $\overline{L} = \frac{1}{n}\sum_{i=1}^n L_i$ and $0 < \beta_k \leq 1$.\\Then for all $N > 0$ (number of steps for \ref{eq:S_SEG} algorithm with the output $z^{N}$):
        \begin{equation*}
	\Exp\left[\|z^N - z^*\|^2\right] = \mathcal{O}\left( \frac{\overline{L}R_0^2}{\overline{\mu}}\exp \left( -\frac{\overline{\mu} N}{\overline{L}} \right) + \frac{\sigma_{\algname{IS}*}^2}{\overline{\mu}^2 N} + \frac{\zeta^2}{\overline{\mu}^2}\right),
	\end{equation*} 
where $z^*$ is the solution of~\eqref{eq:variational-ineq-form}, $R_0^2 = \|z^0 - z^*\|^2, \sigma_{\algname{IS}*}^2 = \frac{1}{n}\sum_{i=1}^n \frac{\overline{L}}{L_i}\|F_i(z^*)\|^2$ , $\overline{\mu}$ is defined in Example~\ref{ex:importance_sampling}\\and $\zeta > 0$ is the upper-bound for bias norm (see Definition \ref{def:b_oracle}).\\\\
The proof of Corollary \ref{cor:S-SEGIS-o-convergence} can be found in supplementary materials (Appendix Corollary~\ref{cor:S-SEGIS-o-convergence-proof}).
\end{corollary}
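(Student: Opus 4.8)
The plan is to derive Corollary~\ref{cor:S-SEGIS-o-convergence} as a direct specialization of Theorem~\ref{thm:S-SEG_convergence_rate}, so the first task is to check that the Importance Sampling setup of Example~\ref{ex:importance_sampling} with the prescribed stepsizes meets every hypothesis of that theorem. Assumptions~\ref{as:lipschitzness} and~\ref{as:str_monotonicity} hold by the standing assumptions on the summands $F_i$; Assumption~\ref{as:stepsize_and_mu_conditions} holds by Example~\ref{ex:importance_sampling} since $\overline{\mu}>0\ge 0$ and $\gamma_{1,\xi}=\tfrac{\beta_k\gamma\overline{L}}{L_\xi}>0$. For the stepsize bound~\eqref{eq:S_SEG_AS_stepsize_upper_bound}, I would combine $L_\xi$-Lipschitzness with $(\mu_\xi,z^*)$-strong monotonicity through Cauchy--Schwarz to get $|\mu_\xi|\le L_\xi$, whence $4|\mu_\xi|+2L_\xi\le 6L_\xi$ and $\gamma_{1,\xi}=\tfrac{\beta_k}{6L_\xi}\le\tfrac{1}{6L_\xi}\le\tfrac{1}{4|\mu_\xi|+2L_\xi}$; also $\alpha=\tfrac18\le\tfrac18$. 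Thus the theorem applies, and the remaining condition $\rho>0$ will be recovered below from $\overline{\mu}>0$.

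Next I would make the constants in the one-step recursion of Theorem~\ref{thm:S-SEG_convergence_rate} explicit in this parametrization. Using $p_i=\tfrac{L_i}{n\overline{L}}$ and $\gamma_{1,i}=\tfrac{\beta_k}{6L_i}$ one computes
\[
\sigma_{\algname{AS}}^2=\Exp_\xi\!\left[\gamma_{1,\xi}^2\|F_\xi(z^*)\|^2\right]=\frac{\beta_k^2}{36\,\overline{L}^2}\,\sigma_{\algname{IS}*}^2,\qquad \widehat{\gamma_1}=\Exp_\xi[\gamma_{1,\xi}]=\frac{\beta_k}{6\overline{L}},\qquad \widehat{\gamma_2}=\alpha\widehat{\gamma_1}=\frac{\beta_k}{48\overline{L}},
\]
while the Appendix derivation of Assumption~\ref{as:unified_assumption_general} for \algname{S-SEG} supplies the contraction exponent $\rho=\Theta\!\big(\beta_k\overline{\mu}/\overline{L}\big)>0$. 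Substituting these values (and $\alpha=\tfrac18$, which turns $\tfrac{5\alpha(8\alpha+1)}{2}$, $\tfrac{\alpha(20\alpha+1)}{2}$, $\alpha(8\alpha+1)$ into absolute constants) into Theorem~\ref{thm:S-SEG_convergence_rate} and writing $r_k:=\Exp[\|z^k-z^*\|^2]$ gives a recursion of the form
\[
r_{k+1}\le\Big(1-c_0\tfrac{\beta_k\overline{\mu}}{\overline{L}}\Big)r_k+c_1\tfrac{\beta_k^2}{\overline{L}^2}\sigma_{\algname{IS}*}^2+c_2\tfrac{\beta_k}{\overline{L}}\zeta^2-c_3\tfrac{\beta_k}{\overline{L}}\zeta R,
\]
with absolute constants $c_0,\dots,c_3>0$; the last term is non-positive and may be discarded for an upper bound.

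Finally I would unroll this recursion with a two-phase stepsize schedule: keep $\beta_k\equiv 1$ for the first $\Theta(\overline{L}/\overline{\mu})$ iterations, so the pure contraction $(1-c_0\overline{\mu}/\overline{L})^k$ drives the initial error to the exponential term $\tfrac{\overline{L}R_0^2}{\overline{\mu}}\exp(-\Theta(\overline{\mu}N/\overline{L}))$, then switch to the diminishing rule $\beta_k\asymp\overline{L}/(\overline{\mu}k)$ (truncated at $1$), which balances the $\beta_k^2\sigma_{\algname{IS}*}^2$ term against the contraction and yields the $\tfrac{\sigma_{\algname{IS}*}^2}{\overline{\mu}^2N}$ decay; the linear-in-$\beta_k$ bias term $c_2\tfrac{\beta_k}{\overline{L}}\zeta^2$ divided by the contraction rate $c_0\tfrac{\beta_k\overline{\mu}}{\overline{L}}$ leaves a fixed floor $\Theta(\zeta^2/\overline{\mu})$, absorbed into $\mathcal{O}(\zeta^2/\overline{\mu}^2)$. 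Equivalently one may invoke the recursion lemma already used for the unbiased case in \citep{gorbunov2022stochastic}, now carrying the extra $\mathcal{O}(\beta_k\zeta^2)$ term. The comparison claim then follows since passing from Corollary~\ref{cor:S-SEGUS-o-convergence} to Corollary~\ref{cor:S-SEGIS-o-convergence} amounts to replacing $L_{\max}$ by $\overline{L}$ in the contraction and $\sigma_{\algname{US}*}^2$ by $\sigma_{\algname{IS}*}^2$ in the variance term, together with $\overline{L}\le L_{\max}$. I expect the main obstacle to be the bookkeeping of the two bias terms through the unrolling: keeping the negative $-c_3\beta_k\zeta R$ term (or running a separate argument that the iterates, hence $R=\max_{i\le N}\|z^i-z^*\|$, remain bounded) so that the $\zeta$-dependence collapses to a single clean $\zeta^2/\overline{\mu}^2$ floor without degrading the $1/N$ rate of the stochastic term.
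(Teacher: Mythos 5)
Your overall route is the same as the paper's: check that the Importance Sampling stepsizes satisfy the hypotheses of Theorem~\ref{thm:S-SEG_convergence_rate} (your observation $|\mu_\xi|\le L_\xi$, hence $\gamma_{1,\xi}=\tfrac{\beta_k}{6L_\xi}\le\tfrac{1}{4|\mu_\xi|+2L_\xi}$, is exactly what is implicitly used), compute the IS-specific constants ($\widetilde{\rho}=\Theta(\beta_k\overline{\mu}/\overline{L})$ and $\sigma_{\algname{AS}}^2=\tfrac{\beta_k^2}{36\overline{L}^2}\sigma_{\algname{IS}*}^2$ agree with Corollary~\ref{cor:S-SEGIS-o-convergence-proof}), discard the nonpositive $-\zeta R$ term, and unroll with the two-phase constant-then-$1/k$ schedule, i.e.\ the Stich-type Lemma~\ref{lem:stich_lemma} through the generic Corollary~\ref{cor:S-SEG-corollary-generic}.

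There is, however, a genuine gap in your bookkeeping of the $\zeta^2$ term. You take $\widehat{\gamma_2}=\alpha\widehat{\gamma_1}=\Theta(\beta_k/\overline{L})$ (following the main-text statement of Theorem~\ref{thm:S-SEG_convergence_rate} literally), so your recursion carries a bias term $c_2\tfrac{\beta_k}{\overline{L}}\zeta^2$ that is \emph{linear} in $\beta_k$; dividing it by the contraction $c_0\tfrac{\beta_k\overline{\mu}}{\overline{L}}$ gives a floor of order $\zeta^2/\overline{\mu}$, and your final claim that this is ``absorbed into $\mathcal{O}(\zeta^2/\overline{\mu}^2)$'' is false in general ($\zeta^2/\overline{\mu}\not=\mathcal{O}(\zeta^2/\overline{\mu}^2)$ unless $\overline{\mu}\lesssim 1$), so the stated bound is not reached. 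The resolution is that in the paper's actual proof the coefficient of $\zeta^2$ is the \emph{second moment} of the stepsize: the parameters in Theorem~\ref{proof:S-SEG_convergence_rate_proof} are $E_1=10\alpha^2\Exp_\xi[\gamma_{1,\xi}^2]$ and $E_2=\tfrac{\alpha}{2}\Exp_\xi[\gamma_{1,\xi}^2]$, i.e.\ $\widehat{\gamma_2}:=\Exp_\xi[\gamma_{1,\xi}^2]=\tfrac{\beta_k^2}{36\overline{L}^2}$ for IS (the main-text ``$\widehat{\gamma_2}=\Exp_\xi[\gamma_{2,\xi^k}]$'' is a misprint you inherited). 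With that, the $\zeta^2$ contribution scales as $\beta_k^2$, sits in the $c\gamma_k^2$ slot of Lemma~\ref{lem:stich_lemma} alongside $\sigma_{\algname{AS}}^2$, and unrolls to $\Theta\!\left(\tfrac{\widehat{\gamma_2}}{\widetilde{\rho}^2}\zeta^2\right)=\Theta\!\left(\tfrac{\zeta^2}{\overline{\mu}^2}\right)$ with $\widetilde{\rho}=\tfrac{\overline{\mu}}{96\overline{L}}$, which is exactly the $\tfrac{2016\zeta^2}{\overline{\mu}^2}$ term in Corollary~\ref{cor:S-SEGIS-o-convergence-proof}. Once you replace your linear-in-$\beta_k$ bias term by this quadratic one, the rest of your argument goes through unchanged.
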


It is worth mentioning, that unlike the rate for \algname{S-SEG-US} (\algname{S-SEG} with Uniform Sampling, Corollary~\ref{cor:S-SEGUS-o-convergence}), the rate obtained in Corollary~\ref{cor:S-SEGIS-o-convergence} is influenced by the average Lipschitz constant $\overline{L}$, which can be significantly lower than the largest constant $L_{\max}$. This leads to decreasing the exponential term for convergence rate in \algname{S-SEG-IS} (\algname{S-SEG} with Importance Sampling) compared to that of \algname{S-SEG-US}. 
Moreover, the theoretical foundations of \algname{S-SEG-IS} allow for the adoption of a significantly elevated value of $\gamma$.\\
Additionally, a greater magnitude of $F_i(x^*)$ suggests an increased $L_i$, exemplified by the relationship $|F_i(x^*)|^2 \approx L_i^2$. In such instances, $\sigma_{\algname{IS}}^2$ is approximately proportional to $\overline{L}^2$, while $\sigma_{\algname{US}}^2$ aligns with $\overline{L^2} = \frac{1}{n}\sum_{i=1}^n L_i^2$, which is greater than or equal to $\overline{L}^2$, thus leading to lower estimation for second term for convergence in \algname{S-SEG-IS} compared to that of \algname{S-SEG-US}.\\
In the subsequent section, we will extrapolate these convergence rates to the context of gradient approximation employing a gradient-free oracle, thereby deriving more specialized outcomes. Nonetheless, it is pertinent to acknowledge that the discussions in this section, being predicated on the unspecified operator $F$, render these findings applicable to a substantially broader spectrum of problems.

\section{Gradient-free algorithm for SPP} \label{sec:Main_Result}
In this section, we present the main result of this work, namely a gradient-free algorithm for solving a stochastic SPP in an overparameterized setup. We study it in the following form:
\begin{equation}
    \label{eq:main-spp-problem}
    \min_{x \in \mathbb{R}^{d_x}}\max_{y \in \mathbb{R}^{d_y}}\left\{ f(x, y) := \expect{f(x, y, \xi)} \right\},
\end{equation}
where $f: \mathbb{R}^d \rightarrow \mathbb{R},\quad d := d_x + d_y$ is $L$-smooth convex-concave function. Our approach to develop gradient-free algorithm is based on the biased \algname{S-SEG-US} algorithm described in the previous section. Instead of a biased gradient oracle we use the gradient~approximation with certain assumptions, so let us introduce them together with the main definitions.

\subsection{Definitions and assumptions on the objective function}
Instead of stochastic operator $F_\xi$, like in the section above, we will now use gradient approximation of a gradient-free oracle of our objective function. Firstly, we need to introduce gradient-free oracle.
\begin{definition}[Gradient-free oracle]\label{def:gradientfreeoracle}

Gradient-free oracle is defined via adding bounded bias to objective function $f$:
    \begin{equation*}
    \Tilde{f}(z, \xi) = f(z, \xi) + \delta(z),
    \end{equation*}
where $\delta(z)$ is such that there exists $\Delta > 0$: for all $z \in \R^d$ $|\delta(z)| \leq \Delta .$ 
\end{definition}

It is noticeable, that since we have $l_2$-norm, if for all $z \in \R^d$ $|\delta(z)| = 0,$ then the gradient-free oracle value equals the true value of function $f(z, \xi)$.\\
We will use a gradient~approximation via $l_2$ randomization in a following form:
\begin{definition}[Gradient approximation]\label{def:gradientfreeapprox}
\begin{equation*}
        g(z,\xi,e) = \frac{d}{2 \tau} \left( \Tilde{f}(z + \tau e, \xi) - \Tilde{f}(z - \tau e, \xi) \right)e,
    \end{equation*}
where $e :=  \left(e_x, e_y \right)^T$ is a vector uniformly distributed on unit sphere $S^d(1)$ and $\tau$ is a smoothing parameter.
\end{definition}

Then we assume that gradient noise is bounded as follows.
\begin{assumption} \label{ass:stoch_noise}
    There exists $\sigma^2_* \geq 0$ such that for all $z \in \mathbb{R}^d:$
    \begin{equation*}
        \expect{\norms{\nabla f(z^*, \xi) - \nabla f(z^*)}^2} \leq \sigma^2_*.
    \end{equation*}
\end{assumption}

In this section we also need to assume strong-convexity and $L$-smoothness of the function $f(z, \xi)$. However, it is worth noting that these assumptions are just subcases of more general Assumptions \ref{as:lipschitzness} and \ref{as:str_monotonicity}, for operator $F_{\xi}(z) := \left[ \nabla_x f(x, y, \xi); -\nabla_y f(x, y, \xi)\right]$, so we can refer to them. In the following part of the article we will refer to some parameters like $L_{\max} = \max_{i\in \{1..n\}}L_i$ or $\overline{\mu}$ that are derived from Assumptions \ref{as:lipschitzness} and \ref{as:str_monotonicity} and were defined in the section above.

\subsection{ZOS-SEG algorithm}
In this subsection we present a gradient-free algorithm for solving a stochastic SPP~\eqref{eq:main-spp-problem}  in an overparametrized setup. Our approach is based on the biased \algname{S-SEG} algrorithm. Instead of gradient oracle we use gradient approximation via $l_2$ randomization (Definition \ref{def:gradientfreeapprox}).

\begin{algorithm}[H]
    \caption{Zero-Order Same-sample Stochastic Extragradient algorithm (\algname{ZOS-SEG})}
    \label{alg:zosseg}
        \textbf{Input}: Starting point $z^0 \in \R^d$, stepsize $\gamma_1 > 0$ satisfies \eqref{eq:S_SEG_AS_stepsize_upper_bound}, stepsize $\gamma_2 := \alpha \gamma_1, \alpha = \frac{1}{8},$\\number of iterations $N \in \mathbb{Z}_+$, parameter $\tau > 0$, batch size $B \in \mathbb{Z}_+$. 
        \hspace*{\algorithmicindent}
            \begin{algorithmic}[1] 
            	\For{$k = 0, 1, 2, \ldots, N - 1$}
            	\State Sample uniformly distributed on $S^d(1)$ vectors $\{e_1,...,e_{B}\}$ and i.i.d $\xi_1,...,\xi_{B} \sim \mathcal{D} $
                    \State Define $g_k^B = \frac{1}{B} \sum_{i=1}^{B} g(z^k, \xi_i, e_i)$ using Definition \ref{def:gradientfreeapprox}
            	\State $z^{k + \frac{1}{2}} = z^k - \gamma_1 g_k^B$
                    \State Define $g_{k + \frac{1}{2}}^B = \frac{1}{B} \sum_{i=1}^{B} g(z^{k + \frac{1}{2}}, \xi_i, e_i)$ using Definition \ref{def:gradientfreeapprox}
            	\State $z^{k + 1} = z^k - \gamma_2 g_{k + \frac{1}{2}}^B$
            	\EndFor
            \end{algorithmic}
        \textbf{Output}: $z^N$
\end{algorithm}
It is obvious that gradient approximation satisfies the form of biased oracle (see Definition~\ref{def:b_oracle}), so we can simply reduce the estimation of the convegence of proposed algorithm to the case of biased oracle by substituting upper bounds on the bias and second moment for the gradient approximation (see Appendix for these derivations) in the convergence of \algname{S-SEG} algorithm. We also use batch-size $B$ in the algorithm, let us formally define a batched biased gradient approximation:
    \begin{equation*}
        g^B(z, \xi, e) := \frac{1}{B} \sum_{i=1}^{B} g(z, \xi_i, e_i) \quad \text{for i.i.d. } \xi_1, \xi_2, ..., \xi_B \sim  U[0; n], 
    \end{equation*} \\
where $e = \{e_1,...,e_{B}\}$ are uniformly distributed on $S^d(1)$ vectors.
Then we are able to derive convergence rate of our algorithm.

\subsection{Convergence in terms of the deterministic noise}\label{subsec:determ_noise}

\begin{lemma} \label{lem:zosseg-convergence-rate}
    Consider the setup from Example~\ref{ex:uniform_sampling}. Let Assumptions \ref{as:lipschitzness} and \ref{as:str_monotonicity} hold and let gradient approximation (Definition~\ref{def:gradientfreeapprox}) satisfy Assumption \ref{ass:stoch_noise}. Then for the number of iterations $N$, batch size $B$ and the output $z^N = (x^N,y^N)$ Zero-Order~Same-sample~Stochastic~Extragradient algorithm with Uniform Sampling ($\mathcal{D} \sim U[0; n]$, see Algorithm~\ref{alg:zosseg}) converges with the following rate: 
    \begin{eqnarray*}
       \expect{ \| z^N - z^* \| ^ 2} 
        & \lesssim \frac{L_{\max}R_0^2}{\overline{\mu}}\exp\left( -\frac{\overline{\mu}N}{L_{\max}} \right)  + \frac{ d \sigma_{*}^2}{\overline{\mu}^2 N B} + \frac{d L_{\max}^2 \tau^2}{\overline{\mu}^2 N B} + \frac{d^2 \Delta^2}{\overline{\mu}^2 N B \tau^2} + \frac{L_{\max}^2 \tau^2}{\overline{\mu}^2 } +  \frac{d^2 \Delta^2}{\overline{\mu}^2 \tau^2} ,
    \end{eqnarray*}
   where $L_{\max} = \max_{i\in \{1..n\}}L_i$, $R_0^2 = \|z^0 - z^*\|^2$, $\Delta$ is the upper-bound for noise norm (see Definition \ref{def:gradientfreeoracle}),\\ $\tau$ is the smoothing parameter (see Definition \ref{def:gradientfreeapprox}) and d is the dimension of space.\\\\
   The proof of Lemma \ref{lem:zosseg-convergence-rate} can be found in supplementary materials (Appendix Lemma~\ref{proof:zosseg-convergence-rate-lemma}).
\end{lemma}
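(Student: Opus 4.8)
The plan is to reduce Lemma~\ref{lem:zosseg-convergence-rate} to Corollary~\ref{cor:S-SEGUS-o-convergence} by verifying that the batched gradient approximation $g_k^B$ from Algorithm~\ref{alg:zosseg} fits the biased-oracle template of Definition~\ref{def:b_oracle}, then tracking how the three quantities appearing in the Uniform-Sampling rate ($\sigma_{\algname{US}*}^2$, $\zeta^2$, and implicitly $L_{\max}$) transform under the smoothing and batching. Concretely, the first step is to write $g_k^B = F^B_{\xi^k}(z^k) + b(z^k)$, where $F^B_{\xi^k}$ plays the role of the stochastic operator built from the finite-sum structure of the SPP (with $F_\xi(z) = [\nabla_x f(x,y,\xi); -\nabla_y f(x,y,\xi)]$) and $b$ collects everything else: the $l_2$-randomization bias of the two-point estimator, the bias coming from the adversarial noise $\delta$, and the averaging over the batch. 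The standard estimates for an $l_2$-randomized two-point gradient approximation on an $L$-smooth function then give a bias bound of the form $\|b(z)\|^2 \lesssim L_{\max}^2\tau^2 + \frac{d^2\Delta^2}{\tau^2}$, so that one sets $\zeta^2 = \mathcal{O}\!\left(L_{\max}^2\tau^2 + \frac{d^2\Delta^2}{\tau^2}\right)$ in Corollary~\ref{cor:S-SEGUS-o-convergence}. This explains the two terms $\frac{L_{\max}^2\tau^2}{\overline{\mu}^2}$ and $\frac{d^2\Delta^2}{\overline{\mu}^2\tau^2}$ — they are exactly $\frac{\zeta^2}{\overline{\mu}^2}$.

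The second step is to control the "variance at the solution" term. In Corollary~\ref{cor:S-SEGUS-o-convergence} this is $\frac{\sigma_{\algname{US}*}^2}{\overline{\mu}^2 N}$ with $\sigma_{\algname{US}*}^2 = \frac{1}{n}\sum_i\|F_i(z^*)\|^2$; here the operator is the zero-order estimator, so I would bound $\expect{\|g^B(z^*,\xi,e)\|^2}$ — or more precisely the quantity $\sigma_{\algname{AS}}^2 = \Exp[\gamma_{1,\xi}^2\|F_\xi(z^*)\|^2]$ with $F_\xi$ replaced by the batched approximation at $z^*$. The key facts are: (i) the two-point estimator's second moment on an $L$-smooth function with noisy function values scales like $d\,\expect{\|\nabla f(z^*,\xi)\|^2} + dL_{\max}^2\tau^2 + \frac{d^2\Delta^2}{\tau^2}$; (ii) Assumption~\ref{ass:stoch_noise} lets us replace $\expect{\|\nabla f(z^*,\xi)\|^2}$ by $\sigma_*^2$ (using $\nabla f(z^*)=0$ at the saddle point, i.e. $F(z^*)=0$); (iii) batching by $B$ reduces the part that behaves like a variance by a factor $B$. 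Putting this into the $\frac{1}{\overline{\mu}^2 N}$ slot yields $\frac{d\sigma_*^2}{\overline{\mu}^2 NB} + \frac{dL_{\max}^2\tau^2}{\overline{\mu}^2 NB} + \frac{d^2\Delta^2}{\overline{\mu}^2 NB\tau^2}$, matching the three middle terms of the claimed bound. The exponential term $\frac{L_{\max}R_0^2}{\overline{\mu}}\exp(-\overline{\mu}N/L_{\max})$ then carries over verbatim from Corollary~\ref{cor:S-SEGUS-o-convergence} because the stepsize choice $\gamma_1 = \frac{1}{6L_{\max}}$ is unchanged and the contraction factor $1-\rho$ with $\rho = \Theta(\overline{\mu}/L_{\max})$ is untouched by replacing the exact operator with a biased one (bias only affects the additive terms, as remarked after Theorem~\ref{thm:S-SEG_convergence_rate}).

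The third step is bookkeeping: one must check that with $\gamma_1 = \frac{1}{6L_{\max}}$ the stepsize condition \eqref{eq:S_SEG_AS_stepsize_upper_bound} still holds (it does, as in Example~\ref{ex:uniform_sampling}/Corollary~\ref{cor:S-SEGUS-o-convergence}), that Assumption~\ref{as:stepsize_and_mu_conditions} holds for the Uniform-Sampling setup with $\overline{\mu}>0$ (again inherited from Example~\ref{ex:uniform_sampling}, using that the zero-order estimator is constructed so that $\Exp_e[g(z,\xi,e)]$ is a smoothed version of $F_\xi$ whose value-at-$z^*$ contribution averages to zero up to the controlled bias), and that the "same sample'' structure is respected — the same $\xi_i$ and $e_i$ are used for the exploration and update half-steps, exactly as in \eqref{eq:S_SEG}. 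Finally, unrolling the one-step recursion of Corollary~\ref{cor:S-SEGUS-o-convergence} over $N$ steps and absorbing constants into $\lesssim$ gives the stated bound. I expect the main obstacle to be the precise second-moment and bias estimates for the $l_2$-randomized two-point estimator under \emph{both} a stochastic operator sample $\xi$ and bounded adversarial noise $\delta$ simultaneously — in particular, keeping the $d$, $d^2$, $\tau$, $1/\tau$ and $B$ dependencies exactly consistent with the six terms claimed, and making sure the cross terms between the randomization bias, the noise, and the stochasticity of $\xi$ are handled (these are the computations deferred to the Appendix); everything else is a direct specialization of the already-established Corollary~\ref{cor:S-SEGUS-o-convergence}.
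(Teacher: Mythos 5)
Your proposal is correct and is essentially the paper's own proof: the paper establishes the bias bound $\|\Exp[g(z,\xi,e)]-\nabla f(z)\|\le L_{\xi}\tau+\tfrac{d\Delta}{\tau}$ and the second-moment bound $\Exp\|g(z^*,\xi,e)\|^2\le 4d\sigma_*^2+4dL_{\xi}^2\tau^2+\tfrac{d^2\Delta^2}{\tau^2}$ in two appendix lemmas and then substitutes them into the biased \algname{S-SEG} rate (Theorem~\ref{thm:S-SEG_convergence_rate}/Corollary~\ref{cor:S-SEGUS-o-convergence}), with batching dividing the variance-at-solution contribution by $B$ — exactly your identification of $\zeta^2=\mathcal{O}\bigl(L_{\max}^2\tau^2+\tfrac{d^2\Delta^2}{\tau^2}\bigr)$ and of the three middle terms. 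The computations you defer are precisely those two appendix lemmas, so both the route and the bookkeeping coincide with the paper's.
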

 
We then can proceed to estimating the unknown variables. In order to do so, we will upper-bound $\expect{ \| z^N - z^* \| ^ 2}$ with $\varepsilon$ and derive the main result for this section.

  \begin{theorem}
      \label{th:zosseg-epsilon-acc}
        Consider the setup from Example~\ref{ex:uniform_sampling}. Let Assumptions \ref{as:lipschitzness} and \ref{as:str_monotonicity} hold and\\let gradient approximation (Definition~\ref{def:gradientfreeapprox}) satisfy Assumption \ref{ass:stoch_noise}. Let $L_{\max} = \max_{i\in \{1..n\}}L_i$, $R_0^2 = \|z^0 - z^*\|^2$.\\Let the smoothing parameter $\tau$ satisfy $\tau \leq  \frac{\overline{\mu}}{L_{\max}} \sqrt{\varepsilon}$, adversarial noise $\Delta$ and batch size $B$ satisfy:
        \begin{eqnarray*}
        \Delta &\leq& \frac{\varepsilon \overline{\mu}^2}{L_{\max} \sqrt{d}} \min \left\{ \max \left\{ 1 , \frac{ \sigma_{*}}{\overline{\mu} \sqrt{\varepsilon}}, \sqrt{\frac{L_{\max} \ln \left( \frac{R_0^2 L_{\max}}{\varepsilon \overline{\mu}} \right)}{d \overline{\mu}}} \right\}, \frac{1}{\sqrt{d}} \right\},\\
        B &=& \mathcal{O}\left( \max\left\{ \frac{d \overline{\mu}}{L_{\max} \ln \left( \frac{R_0^2 L_{\max}}{\varepsilon \overline{\mu}} \right)} \max \left\{ \frac{\sigma_{*}^2}{\varepsilon \overline{\mu}^2 } ,  1 \right\}, 1 \right\} \right).
        \end{eqnarray*}
        Then Zero-Order~Same-sample~Stochastic~Extragradient algorithm~with Uniform Sampling ($\mathcal{D} \sim U[0, n]$, see Algorithm~\ref{alg:zosseg})
        achieves $\varepsilon$-accuracy: $\expect{ \| z^N - z^* \| ^ 2}~\leq~ \varepsilon$ with number of iterations $N$ being: 
        \begin{eqnarray*}
           N = \mathcal{O}\left( \frac{L_{\max}}{\overline{\mu}} \ln \left( \frac{R_0^2 L_{\max}}{\varepsilon \overline{\mu}} \right) \right).
        \end{eqnarray*}
        The total number of calls to the gradient-free oracle $T$ is:
        \begin{eqnarray*}
             T = N \cdot B = \mathcal{O}\left(\max \left\{ d, \frac{d \sigma_{*}^2}{\varepsilon \overline{\mu}^2}, \frac{L_{\max}}{\overline{\mu}} \ln \left( \frac{R_0^2 L_{\max}}{\varepsilon \overline{\mu}} \right) \right\} \right).
        \end{eqnarray*}
    \end{theorem}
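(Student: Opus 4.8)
The strategy is to start from the convergence bound of Lemma~\ref{lem:zosseg-convergence-rate} and choose each free parameter so that every one of the six terms on the right-hand side is at most $\varepsilon/6$ (or, more loosely, $\mathcal{O}(\varepsilon)$). The six terms are, in order, the exponentially decaying optimization term $\frac{L_{\max}R_0^2}{\overline\mu}\exp(-\frac{\overline\mu N}{L_{\max}})$, the stochastic-gradient term $\frac{d\sigma_*^2}{\overline\mu^2 NB}$, the smoothing-variance term $\frac{dL_{\max}^2\tau^2}{\overline\mu^2 NB}$, the noise-variance term $\frac{d^2\Delta^2}{\overline\mu^2 NB\tau^2}$, the smoothing-bias floor $\frac{L_{\max}^2\tau^2}{\overline\mu^2}$, and the noise-bias floor $\frac{d^2\Delta^2}{\overline\mu^2\tau^2}$. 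The two "floor" terms (the last two) do not improve with $N$ or $B$, so they must be controlled purely by the choice of $\tau$ and $\Delta$; this forces the ordering in which the parameters are fixed.

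\textbf{Order of the steps.} First I would fix $\tau$ from the smoothing-bias floor: requiring $\frac{L_{\max}^2\tau^2}{\overline\mu^2}\le\varepsilon$ gives exactly $\tau\le\frac{\overline\mu}{L_{\max}}\sqrt\varepsilon$, the hypothesis of the theorem. Second, with $\tau$ now pinned (take it at the stated upper bound, so $\tau^2 = \frac{\overline\mu^2}{L_{\max}^2}\varepsilon$), I would impose $\frac{d^2\Delta^2}{\overline\mu^2\tau^2}\lesssim\varepsilon$, i.e. $\Delta^2\lesssim\frac{\overline\mu^2\tau^2}{d^2}\varepsilon = \frac{\overline\mu^4\varepsilon^2}{L_{\max}^2 d^2}$, which is the "$1$" branch inside the $\min\{\max\{\dots\},\frac{1}{\sqrt d}\}$; the $\max$ factors arise because the batch/iteration terms below will permit a larger $\Delta$ in some parameter regimes, and the outer $\min$ with $\frac{1}{\sqrt d}$ is what is actually needed once $B$ and $N$ are substituted. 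Third, I would fix $N$ from the exponential term: $\frac{L_{\max}R_0^2}{\overline\mu}\exp(-\frac{\overline\mu N}{L_{\max}})\le\varepsilon$ gives $N = \mathcal{O}(\frac{L_{\max}}{\overline\mu}\ln\frac{R_0^2 L_{\max}}{\varepsilon\overline\mu})$, noting (as Corollary~\ref{cor:S-SEGUS-o-convergence} does) that $\beta_k\equiv 1$ is admissible. Fourth, with $N$ fixed, I would solve for $B$ so that the three $\frac{1}{NB}$-terms are each $\mathcal{O}(\varepsilon)$: substituting $N$ and $\tau^2$, the dominant requirement is $B = \mathcal{O}(\max\{\frac{d\overline\mu}{L_{\max}\ln(\cdot)}\max\{\frac{\sigma_*^2}{\varepsilon\overline\mu^2},1\},1\})$, which is the stated value — the $\tau^2$ term contributes the inner "$1$" and the $\sigma_*^2$ term the other branch, while the $\Delta^2$ term is already subsumed once $\Delta$ is at its chosen level. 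Finally, $T = N\cdot B$ is obtained by multiplying, and simplifying $\frac{L_{\max}}{\overline\mu}\ln(\cdot)\cdot\frac{d\overline\mu}{L_{\max}\ln(\cdot)} = d$ yields the three-way max $\mathcal{O}(\max\{d,\frac{d\sigma_*^2}{\varepsilon\overline\mu^2},\frac{L_{\max}}{\overline\mu}\ln\frac{R_0^2L_{\max}}{\varepsilon\overline\mu}\})$.

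\textbf{Main obstacle.} The routine part is the individual term-by-term bounding; the genuinely delicate part is verifying that the $\max$/$\min$ nesting in the stated expressions for $\Delta$ and $B$ is exactly right across all parameter regimes — in particular that the $\frac{1}{\sqrt d}$ cap on $\Delta$ and the outer $\min$ are both necessary and sufficient, and that after substituting the chosen $B$ back into the $\frac{d^2\Delta^2}{\overline\mu^2 NB\tau^2}$ term one still gets $\mathcal{O}(\varepsilon)$ rather than a hidden extra constraint. I would handle this by treating the comparison $\sigma_*^2$ vs. $\varepsilon\overline\mu^2$ as a case split: in the "large noise" regime $\sigma_*^2\ge\varepsilon\overline\mu^2$ the $\sigma_*$-branches of $\Delta$, $B$, $T$ are active, and in the "small noise" regime the "$1$"-branches are, and in each case checking all six terms reduce to $\mathcal{O}(\varepsilon)$ with the claimed $N$. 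One should also double-check that the stepsize constraint \eqref{eq:S_SEG_AS_stepsize_upper_bound} is compatible with $\gamma_1 = \frac{1}{6L_{\max}}$ under Assumption~\ref{as:str_monotonicity} (it is, since $4|\mu_i|+2L_i \le 6L_{\max}$ when $|\mu_i|\le L_i$), so that Lemma~\ref{lem:zosseg-convergence-rate} genuinely applies.
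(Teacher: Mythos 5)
Your proposal is correct and takes essentially the same route as the paper's proof (Appendix Theorem~\ref{proof:zosseg-epsilon-acc-proof}): bound each of the six terms of Lemma~\ref{lem:zosseg-convergence-rate} by $\varepsilon$, reading $N$ off the exponential term, $\tau$ off the smoothing-bias floor $\frac{L_{\max}^2\tau^2}{\overline{\mu}^2}$, $B$ off the two $\frac{1}{NB}$ variance terms, $\Delta$ off the two remaining noise terms (the inner $\max$ branches coming from $\frac{d^2\Delta^2}{\overline{\mu}^2 N B \tau^2}$ after substituting $N$ and $B$, the $\frac{1}{\sqrt{d}}$ cap from the floor $\frac{d^2\Delta^2}{\overline{\mu}^2\tau^2}$), and finally $T = N\cdot B$. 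The only slip is cosmetic: the bound $\Delta \le \frac{\varepsilon\overline{\mu}^2}{d L_{\max}}$ you derive from the noise floor is the outer $\frac{1}{\sqrt{d}}$ branch of the $\min$, not the inner ``1'' branch, but the quantity itself is exactly the paper's.
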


\textit{Sketch of the proof (for full derivation see Appendix Theorem~\ref{proof:zosseg-epsilon-acc-proof}):}\\
We upper-bound each term with $\varepsilon$, and from that, we are able to get explicit estimations for $N$ and $\tau$, while estimations for $\Delta$ and $B$ are derived from several inequalities.\\
The outcome of Theorem \ref{th:zosseg-epsilon-acc} demonstrates the effective iterative complexity $N$. It is important to mention that both the batch size $B$ and the maximum admissible level of adversarial noise $\Delta$ can vary over time. This means they directly depend on $\sigma^2_*$, resulting in an optimal estimate for the number of gradient-free oracle calls $T$. Furthermore, it should be noted that we do not claim that this assessment is definitive. We propose an open question for future research: enhancing the estimate of the maximum admissible noise level, as well as identifying upper bounds for the noise level, beyond which convergence in the overparameterized setup cannot be ensured.

\subsubsection{Comparison in Uniform Sampling setup and Importance Sampling setup}

If we want to use Importance Sampling (see Example~\ref{ex:importance_sampling}) in Algorithm~\ref{alg:zosseg} we can notice that due to the results of Corollaries~\ref{cor:S-SEGUS-o-convergence}~and~\ref{cor:S-SEGIS-o-convergence}, which derive the same estimation of convergence rate, except that $L_{\max}$ is replaced with $\overline{L}$, which in most scenarios leads to better estimation.\\\\
The most significant distinction between the Uniform Sampling setup and Importance Sampling setup convergence rates is the fact that the estimation for Importance Sampling setup will be lower due to the exponential term and terms that are quadratically dependent on $L_{\max}$, as in most cases $\overline{L}$ is less than $L_{\max}$. Consequently, the per-item estimates for number of iterations $N$ and total number of calls $T$ will also be better. On the other hand, we impair the estimates for adversarial noise $\Delta$ and batch size $B$ due to the inverse dependence on $L_{\max}$, which may affect the overall reliability of the statistical model or computational algorithm being implemented.

\subsection{ZOS-SEG algorithm and convergence in terms of the stohastic noise}\label{subsec:stoch_noise}

We then substitute the deterministic noise in the gradient-free oracle (see Definition~\ref{def:gradientfreeoracle}) to a stochastic noise with bounded second moment. This will lead to a slight change in the convergence rate of Algorithm~\ref{alg:zosseg} and the estimation on the adversarial noise $\Delta$. However, other estimations on number of iterations $N$, batch size $B$ and the smoothing parameter $\tau$ will remain exactly the same. Let us define a gradient-free oracle with stochastic noise.

\begin{definition}[Gradient-free oracle with stochastic noise]\label{def:gradientfreeoracle-stochnoise}
Gradient-free oracle with stochastic noise is defined via adding random variable with bounded second moment to the true value of the objective function $f$:
    \begin{equation*}
    \Tilde{f}(z, \xi) = f(z) + \xi,
    \end{equation*}
where $\xi$ is a random variable that there exists $\Delta > 0$: $\expect \xi^2 \leq \Delta^2.$ 
\end{definition}

The definition of gradient approximation will also change, due to the fact, that query points must have different realizations of independent random variables from the same distribution.
\begin{definition}[Gradient approximation with stochastic noise]\label{def:gradientfreeapprox-stochnoise}
\begin{equation*}
        g(z,\xi, \xi', e) = \frac{d}{2 \tau} \left( \Tilde{f}(z + \tau e, \xi) - \Tilde{f}(z - \tau e, \xi') \right)e,
    \end{equation*}
where $\Tilde{f}$ is gradient-free oracle with stochastic noise (see Definition~\ref{def:gradientfreeoracle-stochnoise}), $\xi$ and $\xi'$ are i.i.d, $e :=  \left(e_x, e_y \right)^T$ is a vector uniformly distributed on unit sphere $S^d(1)$ and $\tau$ is a smoothing parameter.
\end{definition}
This realization of gradient approximation differs from Definition~\ref{def:gradientfreeapprox} with dependence on two i.i.d random variables. The implementation of Algorithm~\ref{alg:zosseg} will remain almost the same, but on each iteration we will need to sample i.i.d $\xi_1, \xi'_1,..., \xi_B, \xi'_B \sim \mathcal{D}$. Then $g_k^B = \frac{1}{B} \sum_{i=1}^{B} g(z^k, \xi_i, \xi'_i, e_i)$ will be defined using Definition \ref{def:gradientfreeapprox-stochnoise}.\\
Using the results obtained above we can estimate convergence rate in case of the gradient approximation with stochastic noise. The proof of the decent lemmas is in the Appendix. The main difference from the proof for deterministic noise is considering the fact, that $\xi$ and $\xi'$ are i.i.d and also are independent with vector $e$, uniformly distributed on unit sphere $S^d(1)$ (see Definition~\ref{def:gradientfreeapprox-stochnoise}).

\begin{lemma} \label{lem:zosseg-convergence-rate-stoch-noise}
    Consider the setup from Example~\ref{ex:uniform_sampling}. Let Assumptions \ref{as:lipschitzness} and \ref{as:str_monotonicity} hold and let gradient approximation (Definition~\ref{def:gradientfreeapprox-stochnoise}) satisfy Assumption \ref{ass:stoch_noise}. Then for the number of iterations $N$, batch size $B$ and the output $z^N = (x^N,y^N)$ Zero-Order~Same-sample~Stochastic~Extragradient algorithm with Uniform~Sampling (see Algorithm~\ref{alg:zosseg}) converges with the following rate: 
    \begin{eqnarray*}
       \expect{ \| z^N - z^* \| ^ 2} 
        & \lesssim \frac{L_{\max}R_0^2}{\overline{\mu}}\exp\left( -\frac{\overline{\mu}N}{L_{\max}} \right)  + \frac{ d \sigma_{*}^2}{\overline{\mu}^2 N B} + \frac{d L_{\max}^2 \tau^2}{\overline{\mu}^2 N B} + \frac{d^2 \Delta^2}{\overline{\mu}^2 N B \tau^2} + \frac{L_{\max}^2 \tau^2}{\overline{\mu}^2 },
    \end{eqnarray*}
   where $L_{\max} = \max_{i\in \{1..n\}}L_i$, $R_0^2 = \|z^0 - z^*\|^2$, $\Delta$ is the adversarial noise (see Definition \ref{def:gradientfreeoracle-stochnoise}), $\tau$ is the smoothing parameter (see Definition \ref{def:gradientfreeapprox-stochnoise}) and d is the dimension of space.\\\\
   The proof of Lemma \ref{lem:zosseg-convergence-rate-stoch-noise} can be found in supplementary materials (Appendix Lemma~\ref{proof:zosseg-convergence-rate-lemma-stoch-noise}).
\end{lemma}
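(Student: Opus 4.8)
The plan is to reduce the stochastic-noise case to the biased-oracle framework of Section~\ref{sec:biased_gradient} in exactly the same manner as Lemma~\ref{lem:zosseg-convergence-rate}, but now tracking carefully where the independence of $\xi,\xi'$ and $e$ removes one of the error terms. First I would observe that the batched gradient approximation $g_k^B$ from Definition~\ref{def:gradientfreeapprox-stochnoise} plays the role of the biased oracle $\gg_{\xi^k}$ of Definition~\ref{def:b_oracle}, and then invoke Corollary~\ref{cor:S-SEGUS-o-convergence} (Uniform Sampling, $\alpha=\tfrac18$, $\gamma_{1,\xi^k}=\tfrac{\beta_k}{6L_{\max}}$), which gives
\begin{equation*}
\Exp\!\left[\|z^N-z^*\|^2\right] = \mathcal{O}\!\left(\frac{L_{\max}R_0^2}{\overline{\mu}}\exp\!\left(-\frac{\overline{\mu}N}{L_{\max}}\right) + \frac{\sigma_{\algname{US}*}^2}{\overline{\mu}^2 N} + \frac{\zeta^2}{\overline{\mu}^2}\right),
\end{equation*}
provided one supplies (i) an upper bound $\zeta^2$ on the squared norm of the bias $b(z)=\Exp_{\xi,\xi',e}[g^B(z,\cdot)] - F(z)$ of the gradient approximation, and (ii) the effective noise-at-optimum term $\sigma_{\algname{US}*}^2$, which here becomes the second moment $\Exp[\|g^B(z^*,\cdot)\|^2]$ divided out appropriately by the batch size $B$.

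The core of the argument is therefore two standard smoothing estimates for the $l_2$-randomized approximation, both of which are derived in the Appendix and which I would simply quote: the bias satisfies $\|b(z)\|^2 \lesssim L_{\max}^2\tau^2 d + \tfrac{d^2\Delta^2}{\tau^2}$ (Lipschitz-gradient smoothing bias $\sim L\tau$ plus the noise-amplification term $\sim d\Delta/\tau$ from finite differencing), and the second moment of a single-sample approximation at $z^*$ is $\Exp[\|g(z^*,\cdot)\|^2] \lesssim d\sigma_*^2 + dL_{\max}^2\tau^2 + \tfrac{d^2\Delta^2}{\tau^2}$, so that batching by $B$ contributes $\tfrac1B(d\sigma_*^2 + dL_{\max}^2\tau^2 + \tfrac{d^2\Delta^2}{\tau^2})$ to $\sigma_{\algname{US}*}^2$. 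Substituting $\zeta^2$ and this batched $\sigma_{\algname{US}*}^2$ into the Corollary bound and absorbing $d\sigma_*^2/B$, $dL_{\max}^2\tau^2/B$, $d^2\Delta^2/(B\tau^2)$ under the $1/N$ factor, and $L_{\max}^2\tau^2 d$, $d^2\Delta^2/\tau^2$ under the $1/\overline{\mu}^2$ factor of the $\zeta^2$ term, yields precisely the six-term bound claimed — except that, compared with Lemma~\ref{lem:zosseg-convergence-rate}, the stand-alone term $\tfrac{d^2\Delta^2}{\overline{\mu}^2\tau^2}$ is absent.

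The point requiring genuine care — and the step I expect to be the main obstacle — is explaining why that term drops. In the deterministic case $\Tilde f(z\pm\tau e) = f(z\pm\tau e) + \delta(z\pm\tau e)$ with a fixed function $\delta$, so the finite difference retains a residual $\sim \tfrac{d\Delta}{\tau}$ \emph{bias} that does not decay with batching, forcing the $\tfrac{d^2\Delta^2}{\overline{\mu}^2\tau^2}$ term via $\zeta^2$. With stochastic noise $\Tilde f(z\pm\tau e,\xi) = f(z\pm\tau e) + \xi^{\pm}$ where $\xi^+,\xi^-$ are i.i.d.\ with $\Exp[\xi^{\pm}]=0$ and independent of $e$: taking $\Exp_{\xi,\xi'}$ first kills the noise contribution to the \emph{mean}, so $b(z)$ carries only the $L_{\max}^2\tau^2 d$ smoothing bias and the noise enters solely through the \emph{variance} $\Exp[\|g(z,\cdot)\|^2]$, i.e.\ through the $\tfrac{d^2\Delta^2}{B\tau^2}$ term alone. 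I would state this explicitly using the conditional-expectation / tower-property argument (condition on $e$, take expectation over $(\xi,\xi')$, use zero-mean and independence), verify that $H_1=H_2=0$, $E_1,E_2$ pick up only the non-noise part, and that the effective $\zeta^2$ in Assumption~\ref{as:unified_assumption_general} is $\lesssim L_{\max}^2\tau^2 d$, after which the rest is the bookkeeping already described. The remaining routine verifications — that $\gamma_{1,\xi^k}\le \tfrac{1}{4|\mu_{\xi^k}|+2L_{\xi^k}}$ holds for $\gamma_{1,\xi^k}=\tfrac{\beta_k}{6L_{\max}}$, and that $\overline{\mu}>0$ makes $\rho>0$ — are identical to those in Corollary~\ref{cor:S-SEGUS-o-convergence} and Lemma~\ref{lem:zosseg-convergence-rate} and need only be cited.
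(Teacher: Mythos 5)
Your overall route is the same as the paper's: treat the batched two-point $l_2$ estimator as a biased oracle, plug its bias and second-moment bounds into the biased \algname{S-SEG} rate (Theorem~\ref{thm:S-SEG_convergence_rate} specialized as in Corollary~\ref{cor:S-SEGUS-o-convergence} for Uniform Sampling), and observe that with i.i.d.\ noise at the two query points the noise no longer contributes to the bias, only to the second moment, which is exactly why the stand-alone $d^2\Delta^2/(\overline{\mu}^2\tau^2)$ term disappears. This is precisely how the paper proceeds (Lemmas~\ref{lem:bias-of-gradapprox-stoch-noise} and~\ref{lem:bounding-sec-moment-grad-approx-stoch-noise} substituted into the first-order convergence result), and your identification of the cancellation mechanism is the correct key idea.

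Two slips need fixing before your sketch actually yields the stated bound. First, you quote the smoothing bias as $\|b(z)\|^2 \lesssim d\,L_{\max}^2\tau^2$; the correct bound, and the one proved in Lemma~\ref{lem:bias-of-gradapprox-stoch-noise}, is $\|\Exp[g]-\nabla f(z)\|\le L_{\xi}\tau$, i.e.\ $\zeta^2\lesssim L_{\max}^2\tau^2$ with no dimension factor. With your extra $d$ the final term of the rate becomes $d L_{\max}^2\tau^2/\overline{\mu}^2$, which is weaker by a factor $d$ than the $L_{\max}^2\tau^2/\overline{\mu}^2$ claimed in the lemma, so the bookkeeping as written does not recover the statement (the same spurious $d$ appears where you summarize the "effective $\zeta^2$"). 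Second, Definition~\ref{def:gradientfreeoracle-stochnoise} does not assume $\Exp[\xi]=0$; it only bounds $\Exp[\xi^2]$. The cancellation must therefore be argued, as in the paper, from the facts that $\xi$ and $\xi'$ are i.i.d.\ (hence have equal means) and are independent of $e$, so that $\Exp[(\xi-\xi')e]=(\Exp[\xi]-\Exp[\xi'])\,\Exp[e]=0$; invoking zero-mean noise uses a hypothesis you have not been given. Both fixes are local, and the rest of your reduction (batching dividing the second-moment bound by $B$, the stepsize condition \eqref{eq:S_SEG_AS_stepsize_upper_bound}, and $\overline{\mu}>0$ giving $\rho>0$) matches the paper's argument.
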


We can notice, that convergence rate for Algorithm~\ref{alg:zosseg}
with the gradient approximation with stochastic noise has equal terms as with the deterministic noise, except for the abundance of the last term $\frac{d^2 \Delta^2}{\overline{\mu}^2 \tau^2}$. Not only it reduces the total upper-bound on the convergence rate, but also implies less restrictions on the adversarial noise $\Delta$ as we can proof with the following theorem.

\begin{theorem}
  \label{th:zosseg-epsilon-acc-stochnoise}
    Consider the setup from Example~\ref{ex:uniform_sampling}. Let Assumptions \ref{as:lipschitzness} and \ref{as:str_monotonicity} hold and\\let gradient approximation (Definition~\ref{def:gradientfreeapprox-stochnoise}) satisfy Assumption \ref{ass:stoch_noise}. Let $L_{\max} = \max_{i\in \{1..n\}}L_i$, $R_0^2 = \|z^0 - z^*\|^2$.\\\\
    Let the smoothing parameter $\tau$ satisfy $\tau \leq \max \left\{ \frac{\sigma_{*}}{L_{\max}}, \sqrt{\frac{\varepsilon\overline{\mu} \ln \left( \frac{R_0^2 L_{\max}}{\varepsilon \overline{\mu}} \right)}{L_{\max} d }}\right\}$, adversarial noise $\Delta$ and batch size B satisfy:
    \begin{eqnarray*}
    \Delta &\leq& \frac{\sqrt{\varepsilon}  \sigma_{*}}{L_{\max} d} \max \left\{ 1 , \frac{\overline{\mu}^2 \sqrt{\varepsilon} \ln \left( \frac{R_0^2 L_{\max}}{\varepsilon \overline{\mu}} \right)}{ \sigma_{*}  \sqrt{d}}, \sqrt{\frac{ \overline{\mu}^3  \ln \left( \frac{R_0^2 L_{\max}}{\varepsilon \overline{\mu}} \right)}{L_{\max}}},     \sqrt{\frac{ \varepsilon \overline{\mu} L_{\max} \ln \left( \frac{R_0^2 L_{\max}}{\varepsilon \overline{\mu}} \right)}{d}} \right\},  \\
    B &=& \mathcal{O}\left( \max\left\{ \frac{d \sigma_{*}^2}{\varepsilon \overline{\mu} L_{\max} \ln \left( \frac{R_0^2 L_{\max}}{\varepsilon \overline{\mu}} \right)} , 1 \right\} \right).
    \end{eqnarray*}
    Then Zero-Order~Same-sample~Stochastic~Extragradient algorithm with Uniform~Sampling~(see Algorithm~\ref{alg:zosseg})
    achieves $\varepsilon$-accuracy: $\expect{ \| z^N - z^* \| ^ 2}~\leq~ \varepsilon$ with number of iterations $N$ being: 
    \begin{eqnarray*}
       N = \mathcal{O}\left( \frac{L_{\max}}{\overline{\mu}} \ln \left( \frac{R_0^2 L_{\max}}{\varepsilon \overline{\mu}} \right) \right).
    \end{eqnarray*}
    The total number of calls to the gradient-free oracle $T$ is:
    \begin{eqnarray*}
         T = N \cdot B = \mathcal{O}\left(\max \left\{ \frac{d \sigma_{*}^2}{\varepsilon \overline{\mu}^2}, \frac{L_{\max}}{\overline{\mu}} \ln \left( \frac{R_0^2 L_{\max}}{\varepsilon \overline{\mu}} \right) \right\} \right).
    \end{eqnarray*}
\end{theorem}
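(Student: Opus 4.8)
The plan is to derive the $\varepsilon$-accuracy statement directly from the convergence bound of Lemma~\ref{lem:zosseg-convergence-rate-stoch-noise} by requiring each of its five summands to be $\mathcal{O}(\varepsilon)$ and reading off the resulting restrictions on $N$, $\tau$, $B$ and $\Delta$; this is the same route used for Theorem~\ref{th:zosseg-epsilon-acc}, only now the expression to be balanced has five terms instead of six, since the standalone bias term $d^2\Delta^2/(\overline{\mu}^2\tau^2)$ is absent in the stochastic-noise regime — which is precisely what relaxes the admissible noise level and makes the whole argument a bit shorter.

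Concretely, I would proceed term by term. First, the only exponentially decaying summand is $\frac{L_{\max}R_0^2}{\overline{\mu}}\exp\left(-\frac{\overline{\mu}N}{L_{\max}}\right)$; setting it $\le\varepsilon$ and solving for $N$ forces $N=\mathcal{O}\left(\frac{L_{\max}}{\overline{\mu}}\ln\left(\frac{R_0^2L_{\max}}{\varepsilon\overline{\mu}}\right)\right)$, unchanged from the deterministic case (write $\ell:=\ln\left(\frac{R_0^2L_{\max}}{\varepsilon\overline{\mu}}\right)$ henceforth). Next, the two \emph{smoothing} terms $\frac{dL_{\max}^2\tau^2}{\overline{\mu}^2NB}$ and $\frac{L_{\max}^2\tau^2}{\overline{\mu}^2}$ are controlled by choosing $\tau$ so that at least one of two sufficient conditions holds: either $L_{\max}\tau\lesssim\sigma_*$, which makes the first of these no larger than the stochastic term $\frac{d\sigma_*^2}{\overline{\mu}^2NB}$, or $\tau\lesssim\sqrt{\varepsilon\overline{\mu}\ell/(L_{\max}d)}$, which makes it $\mathcal{O}(\varepsilon)$ outright because $NB\ge N\gtrsim L_{\max}\ell/\overline{\mu}$; keeping the larger admissible value gives the stated bound $\tau\le\max\left\{\frac{\sigma_*}{L_{\max}},\sqrt{\frac{\varepsilon\overline{\mu}\ell}{L_{\max}d}}\right\}$, and one checks that under the overparametrized scaling (where $d$ is the dominant quantity, so $L_{\max}\ell\lesssim d\overline{\mu}$) the second smoothing term is then also $\mathcal{O}(\varepsilon)$. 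Finally, $\frac{d\sigma_*^2}{\overline{\mu}^2NB}\le\varepsilon$ forces $NB\gtrsim\frac{d\sigma_*^2}{\varepsilon\overline{\mu}^2}$, and combined with the value of $N$ already fixed this yields $B=\mathcal{O}\left(\max\left\{\frac{d\sigma_*^2}{\varepsilon\overline{\mu}L_{\max}\ell},1\right\}\right)$ and hence $T=NB=\mathcal{O}\left(\max\left\{\frac{d\sigma_*^2}{\varepsilon\overline{\mu}^2},\frac{L_{\max}}{\overline{\mu}}\ell\right\}\right)$.

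It remains to pin down $\Delta$. The only term carrying the adversarial noise is $\frac{d^2\Delta^2}{\overline{\mu}^2NB\tau^2}$; requiring it to be $\mathcal{O}(\varepsilon)$ gives $\Delta\lesssim\frac{\sqrt{\varepsilon}\,\overline{\mu}\,\tau\sqrt{NB}}{d}$, and substituting the values of $N$, $B$, $\tau$ fixed above and expanding according to which branch of the $\tau$-maximum and of the $B$-maximum is active produces the four-way maximum claimed for $\Delta$. I expect this last case analysis to be the main obstacle: one must verify that the single chosen $\tau$ simultaneously satisfies all of its constraints, keep track of how the $\max$ over $\tau$ multiplies against the $\max$ over $B$ (a product of two two-term maxima, i.e.\ four candidate expressions to compare), and confirm that the stated bound is the largest value of $\Delta$ compatible with every branch. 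Once the constants hidden in the $\mathcal{O}(\cdot)$ and $\lesssim$ notation are accounted for, the theorem follows.
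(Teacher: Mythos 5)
Your overall route --- balancing the five terms of Lemma~\ref{lem:zosseg-convergence-rate-stoch-noise} one by one, exactly as in the proof of Theorem~\ref{th:zosseg-epsilon-acc} --- is in spirit the same as the paper's. However, the paper's actual proof (Appendix Theorem~\ref{proof:zosseg-epsilon-acc-proof-stoch-noise}) is much more conservative than what you attempt: it simply re-uses the deterministic parameter choices verbatim, i.e.\ it keeps $\tau \le \frac{\overline{\mu}}{L_{\max}}\sqrt{\varepsilon}$ and the deterministic batch size $B$, and observes that the only change caused by the disappearance of the term $\frac{d^2\Delta^2}{\overline{\mu}^2\tau^2}$ is that the $\min\{\cdot,\tfrac{1}{\sqrt{d}}\}$ branch in the bound on $\Delta$ is dropped. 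It never tries to justify the larger smoothing parameter $\tau \le \max\bigl\{\tfrac{\sigma_*}{L_{\max}},\sqrt{\tfrac{\varepsilon\overline{\mu}\ell}{L_{\max}d}}\bigr\}$, the single-max batch size, or the four-way maximum in $\Delta$ that appear in the main-text statement you are proving.

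Your attempt to derive those main-text expressions has a genuine gap at the fifth term. In the stochastic-noise bound the summand $\frac{L_{\max}^2\tau^2}{\overline{\mu}^2}$ is still present (only $\frac{d^2\Delta^2}{\overline{\mu}^2\tau^2}$ is gone), and under your $\tau$ it is not $\mathcal{O}(\varepsilon)$ in general: on the branch $\tau\asymp\frac{\sigma_*}{L_{\max}}$ it equals $\frac{\sigma_*^2}{\overline{\mu}^2}$, which exceeds $\varepsilon$ precisely in the interesting regime $\sigma_*>\overline{\mu}\sqrt{\varepsilon}$ where that branch dominates (and when $\sigma_*\le\overline{\mu}\sqrt{\varepsilon}$ the branch is anyway weaker than the deterministic bound $\frac{\overline{\mu}}{L_{\max}}\sqrt{\varepsilon}$, so it buys nothing); on the branch $\tau\asymp\sqrt{\tfrac{\varepsilon\overline{\mu}\ell}{L_{\max}d}}$ it equals $\frac{\varepsilon L_{\max}\ell}{\overline{\mu}d}$, which needs the extra condition $L_{\max}\ell\lesssim\overline{\mu}d$ that you invoke as ``overparametrized scaling'' --- but overparametrization in this paper only means $d>n$ and gives no such inequality, and nothing of the sort is assumed in the theorem. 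Consequently the cascade you build on top of this $\tau$ (the single-max $B$ and the four-way max in $\Delta$ obtained from $\Delta\lesssim\frac{\sqrt{\varepsilon}\,\overline{\mu}\,\tau\sqrt{NB}}{d}$) is not justified as written. To close the argument the way the paper does, fix $\tau\le\frac{\overline{\mu}}{L_{\max}}\sqrt{\varepsilon}$ so that the fifth term is controlled exactly as in Theorem~\ref{th:zosseg-epsilon-acc}, keep the deterministic $N$ and $B$, and then relax only the $\Delta$ constraint, which now comes solely from $\frac{d^2\Delta^2}{\overline{\mu}^2NB\tau^2}\le\varepsilon$.
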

The proof of Theorem~\ref{th:zosseg-epsilon-acc-stochnoise} can be found in supplementary materials (Appendix Theorem~\ref{proof:zosseg-epsilon-acc-proof-stoch-noise}).\\\\
When contemplating the implementation of Importance Sampling, as illustrated in Example~\ref{ex:importance_sampling}, it becomes evident that, pursuant to the findings of Corollaries~\ref{cor:S-SEGUS-o-convergence} and \ref{cor:S-SEGIS-o-convergence}, an analogous convergence rate estimation is deduced. The distinction lies in the substitution of $L_{\max}$ with $\overline{L}$, which, in a majority of cases, culminates in a more favorable estimation.

\subsubsection{Comparison in deterministic noise setup and stochastic noise setup}

By replacing the deterministic noise with stochastic noise in the definition of a gradient-free oracle, we have achieved a smaller number of terms in the convergence rate estimate of our algorithm. However, all the parameter estimates remain exactly the same as in the deterministic case, except for the estimate of adversarial noise $\Delta$, which has increased due to the absence of one of the terms dependent on it and instead of taking the minimum of two arguments, we now only have one of them. Since we aim to reduce the noise, the case of stochastic noise is less advantageous.

\newpage
\section{Experiments} \label{sec:Experiments}
In this section, we will use a simple example to verify the theoretical results, namely to show the convergence of the proposed Zero-Order~Same-sample~Stochastic~Extragradient algorithm (\algname{ZOS-SEG}, see Algorithm \ref{alg:zosseg}). We used implementation using Uniform Sampling. The optimization problem \eqref{eq:main-spp-problem} is as follows:
\begin{eqnarray}\label{eq:experiments_problems}
    \min_{x \in \mathbb{R}^{d_x}} \max_{y \in \mathbb{R}^{d_y}}f(x,y) = \min_{x \in \mathbb{R}^{d_x}} \max_{y \in \mathbb{R}^{d_y}} \frac{1}{n} \sum_{i=1}^n f_i(x, y),&&\\
    f_i(x, y) := x^T C_i y + \frac{\lambda_i}{2} ||x||^2 - \frac{\lambda_i}{2} ||y||^2,\nonumber
\end{eqnarray} 
where $C_i \in \mathbb{R}^{d_x \times d_y}$ for all $1 \leq i \leq n$, $\lambda_i$ is a regularization paramether and overparameterization condition holds ($d > n$). Problem \eqref{eq:experiments_problems} is a convex-concave stochastic optimization problem \eqref{eq:main-spp-problem}.\\
We conducted experiments for different values of parameter $\Delta$ (see Definitions~\ref{def:gradientfreeoracle}~and~~\ref{def:gradientfreeoracle-stochnoise}). For the deterministic noise we use the following function: $\delta(z) = \frac{\Delta}{1 + \|z\|}$. For the stochastic noise we use Standard Normal distribution multiplied by $\Delta$.
We optimize $f(x, y)$ \eqref{eq:experiments_problems} with parameters: $d_x = 64, d_y = 64$ (dimensional of problem), $n = 32$
(number of functions in a finite sum), $\tau = 1$
(smoothing parameter), $\gamma = 0.05$ (step size),
$B = 128$ (batch size). In the Appendix subsection~\ref{app:experiments} you can see other graphs for algorithm \algname{ZOS-SEG} with different values of batch size $B$.\\

\begin{figure}[htbp]
\begin{minipage}[h]{\linewidth}
\centering
\includegraphics[width=0.7\linewidth, height=9.5cm]{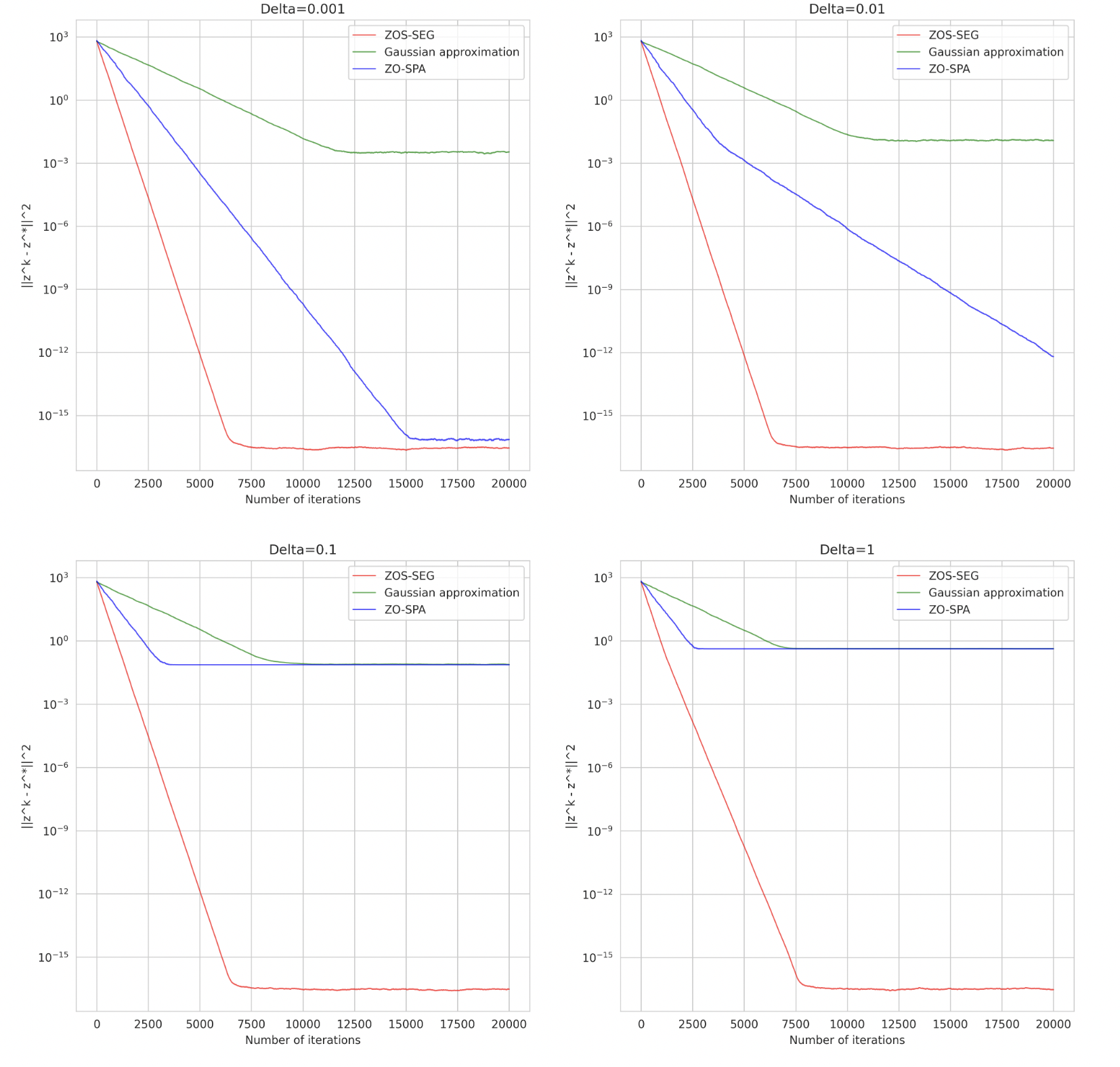}
\captionsetup{justification=centering}
\caption{Convergence of the Zero-Order~Same-sample~Stochastic~Extragradient (\algname{ZOS-SEG}) algorithm, Zeroth-Order Saddle-Point Algorithm (algname{ZO-SPA}~\cite{sadiev2020}) and Gaussian approximation: deterministic noise setup}
\label{fig:determ_noise}
\end{minipage}
\hfill
\end{figure}

In Figure \ref{fig:determ_noise} we see the convergence of the proposed gradient-free algorithm \algname{ZOS-SEG} and other algorithms applied to the same optimization problem~\eqref{eq:experiments_problems} in the deterministic noise setup (see Subsection \ref{subsec:determ_noise}). We can conclude that proposed algorithm \algname{ZOS-SEG} achieves much better convergence rate than other algorithms. As we can see, with $\Delta = 0.001$ algorithm \algname{ZO-SPA}~\cite{sadiev2020} achieves almost the same rate, however with greater values of parameter $\Delta$ our algorithm prooves its excellence.

\newpage

\begin{figure}[htbp]
\begin{minipage}[h]{\linewidth}
\centering
\includegraphics[width=0.7\linewidth, height=9.5cm]{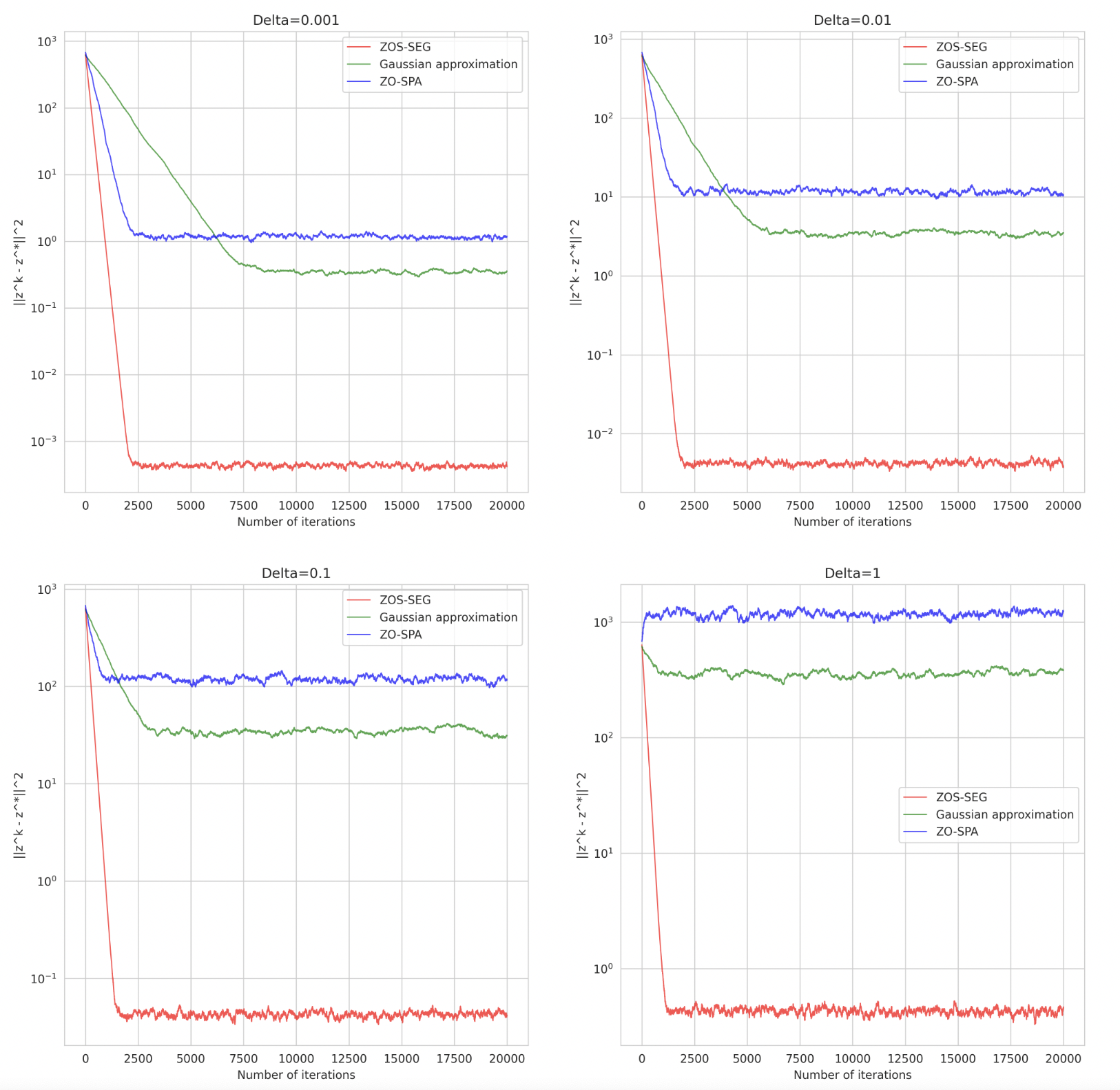}
\captionsetup{justification=centering}
\caption{Convergence of the Zero-Order~Same-sample~Stochastic~Extragradient (\algname{ZOS-SEG}) algorithm, Zeroth-Order Saddle-Point Algorithm (\algname{ZO-SPA}~\cite{sadiev2020}) and Gaussian approximation: stochastic noise setup}
\label{fig:stoch_noise}
\end{minipage}
\hfill

\end{figure}

In Figure \ref{fig:stoch_noise} we see the convergence of the same algorithms in the setup of stochastic noise (see Subsection \ref{subsec:stoch_noise}). We can conclude that proposed Algorithm \algname{ZOS-SEG} achieves the best convergence rate amongst other algorithms, which in this particular setup do not present the needed accuracy. We can also notice, that in both mentioned setups our algorithm has a great convergence rate, however in the stochastic setup, it needs less number of iterations $N$ in order to achieve it.\\
For further details please refer to the code: \url{https://github.com/Sone4ka1567/Gradient_free_algorithm_for_spp}.

\section{Conclusion} \label{sec:Conclusion}

This paper presents \algname{ZOS-SEG} (Algorithm \ref{alg:zosseg}), first  gradient-free algorithm for SPP under overparametrized conditions.
Also we provide the theoretical base for this algorithm: we estimate convergence rate, and number of iterations, required to guarantee that convergence rate achieves $\varepsilon$-accuracy. Moreover, we compared convergence rate estimations in Uniform Sampling setup and Importance Sampling setup: whilst Importance Sampling setup offers improved estimates for number of iterations $N$ and total number of calls $T$, it also introduces challenges in estimating adversarial noise $\Delta$ and batch size $B$ , potentially impacting the reliability of the overall computational algorithm or statistical model. Also we compared deterministic noise setup and stochastic noise setup: estimate of adversarial noise $\Delta$ has increased due to the absence of one of the terms dependent on it. Amongst all the mentioned cases, the Algorithm based on Importance Sampling setup with deterministic noise is the most advantageous based on a combination of factors, such as convergence rate of the algorithm and upper-bound estimation on the adversarial noise $\Delta$. We examine our theoretical results in the experiments and show the convergence rate of our algorithm in comparison with the others. These findings provide a strong foundation for further advancements and development of efficient optimization algorithms for SPP under overparametrization conditions.

\newpage
\subsection*{Acknowledgements}
This work was supported by a grant for research centers in the field of artificial intelligence, provided by the Analytical Center for the Government of the Russian Federation in accordance with the subsidy agreement (agreement identifier 000000D730324P540002) and the agreement with the Moscow Institute of Physics and Technology dated November 1, 2021 No. 70-2021-00138.

\bibliographystyle{cas-model2-names}

\bibliography{cas-refs}

\appendix
\section{Appendix}

\subsection{Basic Inequalities}

For all $a, b, a_i, \in \R^d, i \le n \in \N$ the following inequalities hold:
\begin{equation}
    2\langle a, b\rangle = \|a\|^2 + \|b\|^2 - \|a - b\|^2. \label{eq:inner_product_representation}
\end{equation}
\begin{equation}
    \|a + b\|^2 \geq \frac{1}{2}\|a\|^2 - \|b\|^2, \label{eq:a+b_lower}
\end{equation}
\begin{equation}
    \left\|\sum\limits_{i=1}^n a_i\right\|^2 \leq n\sum\limits_{i=1}^n\|a_i\|^2.\label{eq:a+b}
\end{equation}
Inequality~\eqref{eq:a+b_lower} is obtained from applying the parallelogram law to sum $\|a\|^2 + \|a + 2b\|^2$.\\
Inequality~\eqref{eq:a+b} is obtained from Cauchy–Schwarz inequality and the fact that root mean square is greater or equal than geometric mean.

\subsection{Auxiliary Results}
In this section we list auxiliary facts that we use several times in our~proofs.
    
\begin{lemmasec}[Simplified version of Lemma 3 from Stich  \cite{stich2019unified} , 2019]\label{lem:stich_lemma}
	Let the non-negative sequence $\{r_k\}_{k\ge 0}$ satisfy the relation
	\begin{equation*}
		r_{k+1} \leq (1 - a\gamma_k)r_k - b\gamma_k + c\gamma_k^2
	\end{equation*}
	for all $k \geq 0$, parameters $a, b, c\ge 0$, and any non-negative sequence $\{\gamma_k\}_{k\ge 0}$ such that $\gamma_k \leq \frac{1}{h}$ for some $h \ge a$, $h> 0$. Then, for any $K \ge 0$ one can choose $\{\gamma_k\}_{k \ge 0}$ as follows:
	\begin{eqnarray*}
		\text{if } K \le \frac{h}{a}, && \gamma_k = \frac{1}{h},\\
		\text{if } K > \frac{h}{a} \text{ and } k < k_0, && \gamma_k = \frac{1}{h},\\
		\text{if } K > \frac{h}{a} \text{ and } k \ge k_0, && \gamma_k = \frac{2}{a(\kappa + k - k_0)},
	\end{eqnarray*}
	where $\kappa = \frac{2h}{a}$ and $k_0 = \left\lceil \frac{K}{2} \right\rceil$. For this choice of $\gamma_k$ the following inequality holds:
	\begin{eqnarray*}
		r_{K} \le \frac{32hr_0}{a}\exp\left(-\frac{a K}{2h}\right) + \frac{36c}{a^2 K}.
	\end{eqnarray*}
\end{lemmasec}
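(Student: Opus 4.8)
The plan is to follow the standard Stich-type argument, adapted to the explicit two-phase schedule. First I would discard the helpful term: since $b\ge 0$ and $\gamma_k\ge 0$ we have $-b\gamma_k\le 0$, so every inequality is preserved after dropping it, and it suffices to analyze $r_{k+1}\le(1-a\gamma_k)r_k+c\gamma_k^2$. Note also that $\gamma_k\le 1/h\le 1/a$ guarantees $1-a\gamma_k\in[0,1)$, so all contraction factors lie in $[0,1)$ and the recursion can be unrolled safely.

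For the easy regime $K\le h/a$ I would use the constant stepsize $\gamma_k\equiv 1/h$ and unroll directly: $r_K\le(1-a/h)^K r_0+\tfrac{c}{h^2}\sum_{i=0}^{K-1}(1-a/h)^i$. Bounding the geometric sum by $h/a$ yields the additive constant $\tfrac{c}{ah}$, while $(1-a/h)^K\le\exp(-aK/h)\le\exp(-aK/(2h))$ controls the first term. Because $K\le h/a$ forces $\exp(-aK/(2h))\ge e^{-1/2}$ and $h/a\ge 1$, the term $(1-a/h)^K r_0$ is dominated by $\tfrac{32h r_0}{a}\exp(-aK/(2h))$, and $K\le h/a$ lets me rewrite $\tfrac{c}{ah}$ as at most a constant multiple of $\tfrac{c}{a^2K}$, which fits under $\tfrac{36c}{a^2K}$.

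The main regime $K>h/a$ splits into the two phases of the schedule. In Phase~1 ($k<k_0=\lceil K/2\rceil$) I unroll the constant stepsize exactly as above to obtain $r_{k_0}\le(1-a/h)^{k_0}r_0+\tfrac{c}{ah}$ with $(1-a/h)^{k_0}\le\exp(-aK/(2h))$. In Phase~2 ($k\ge k_0$) I reindex $s_t:=r_{k_0+t}$ so that $1-a\gamma_{k_0+t}=\tfrac{\kappa+t-2}{\kappa+t}$, and introduce the telescoping weights $w_t:=(\kappa+t)(\kappa+t-1)$, chosen precisely so that $w_t(1-a\gamma_{k_0+t})=w_{t-1}$. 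Multiplying the recursion by $w_t$ gives $w_t s_{t+1}\le w_{t-1}s_t+\tfrac{4c}{a^2}\tfrac{\kappa+t-1}{\kappa+t}\le w_{t-1}s_t+\tfrac{4c}{a^2}$, and summing over $t=0,\dots,T-1$ with $T:=K-k_0$ telescopes to $r_K\le\tfrac{w_{-1}}{w_{T-1}}r_{k_0}+\tfrac{4c}{a^2}\tfrac{T}{w_{T-1}}$.

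It then remains to substitute the Phase-1 bound for $r_{k_0}$ and collect terms, using $\kappa=2h/a$, $h\ge a$ (so $\kappa\ge 2$) and $T\ge K/2-1$. The exponential contribution is bounded by $\tfrac{32h r_0}{a}\exp(-aK/(2h))$ after estimating $w_{-1}/w_{T-1}\le\kappa^2/(\kappa+T-2)^2\le 1$, while the two surviving $c$-terms, $\tfrac{w_{-1}}{w_{T-1}}\tfrac{c}{ah}$ and $\tfrac{4c}{a^2}\tfrac{T}{w_{T-1}}$, are each bounded by a constant multiple of $\tfrac{c}{a^2K}$ through $w_{T-1}=(\kappa+T-1)(\kappa+T-2)\ge\kappa^2$ and $w_{T-1}\gtrsim T^2\gtrsim K^2$. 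I expect the only real difficulty to be this constant bookkeeping: one must verify that the single pair of constants $32$ and $36$ works simultaneously in the near-threshold regime $K\approx h/a$ (where $T$ and $\kappa$ are comparable, so the $\tfrac{c}{ah}$ term controls) and in the deep regime $K\gg h/a$ (where $T\gg\kappa$, so the telescoped $\tfrac{T}{w_{T-1}}$ term controls). The generous constants in the statement are exactly what make both regimes close at once.
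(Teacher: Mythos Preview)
The paper does not supply its own proof of this lemma: it is listed under ``Auxiliary Results'' as a simplified version of Lemma~3 in \cite{stich2019unified} and is simply quoted, so there is nothing in the paper to compare your argument against beyond the bare statement.

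That said, your proposal is exactly the standard Stich argument and is correct in all its structural points: dropping the $-b\gamma_k$ term, unrolling the constant-stepsize phase with a geometric sum, and then handling the decaying phase via the quadratic weights $w_t=(\kappa+t)(\kappa+t-1)$ chosen so that $w_t(1-a\gamma_{k_0+t})=w_{t-1}$ and the recursion telescopes. Your caveat about constant bookkeeping is well placed but can in fact be closed cleanly. Writing the two $c$-contributions together as
\[
\frac{c}{a^2 w_{T-1}}\left[\frac{2(\kappa-1)(\kappa-2)}{\kappa}+4T\right]
\le \frac{2c\,(\kappa+2T-2)}{a^2(\kappa+T-1)(\kappa+T-2)}
= \frac{2c}{a^2(\kappa+T-1)}+\frac{2cT}{a^2(\kappa+T-1)(\kappa+T-2)},
\]
and using $\kappa\ge 2$ together with $T=\lfloor K/2\rfloor\ge 1$ (so $\kappa+T-1\ge T+1\ge (K+1)/2$ and $\kappa+T-2\ge T$), each summand is at most $4c/(a^2K)$, giving a total of $8c/(a^2K)$, well within the stated $36c/(a^2K)$. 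The exponential term is handled by $w_{-1}/w_{T-1}\le 1$ and $32h/a\ge 32$, exactly as you indicate. So the constants close with room to spare; your sketch needs no further ideas, only these slightly sharper (but still elementary) estimates in place of the looser $w_{T-1}\gtrsim K^2$ bounds you mention.
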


\begin{lemmasec}[Wirtinger-Poincare]\label{lem:Wirtinger_Poincare}
Let $f$ be differentiable, then for all $x \in \mathbb{R}^d$, $\tau e \in S^d(\tau)$:
\begin{equation*}
    \expect{f(x+ \tau e)^2} \leq \frac{\tau^2}{d} \expect{\norms{\nabla f(x + \tau e)}^2}.
\end{equation*}
\end{lemmasec}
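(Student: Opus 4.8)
The plan is to reduce the statement on the sphere $S^d(\tau)$ to a one-dimensional Poincar\'e (Wirtinger) inequality along great circles, which is the classical route for such isoperimetric-type estimates. First I would recall the key fact that if $e$ is uniformly distributed on the unit sphere $S^d(1)$, then the distribution is invariant under rotations, and in particular one can generate $e$ by first picking a uniformly random great circle through a fixed pair of antipodal points and then picking an angle $\theta$ uniformly on $[0,2\pi)$ along that circle. This lets me write $\expect{f(x+\tau e)^2}$ and $\expect{\|\nabla f(x+\tau e)\|^2}$ as iterated expectations: an outer expectation over the random great circle and an inner expectation over $\theta$.

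Next I would analyze the inner (one-dimensional) problem. Fix a great circle, parametrized as $\theta \mapsto x + \tau u(\theta)$ where $u(\theta)$ traces the unit circle in the $2$-plane spanned by two orthonormal vectors; set $\varphi(\theta) := f(x+\tau u(\theta))$. The crucial normalization is that $e$ is \emph{centered} in the sense that $\expect{f(x+\tau e)}$ should be compared against — more precisely, by rotational symmetry the mean of $\varphi$ over $\theta\in[0,2\pi)$ equals the overall mean $\expect{f(x+\tau e)}$, and the statement as written (with $\expect{f(x+\tau e)^2}$ on the left, not the variance) implicitly uses that this mean is zero, or absorbs it. Granting the centering, the classical Wirtinger inequality on the circle gives $\frac{1}{2\pi}\int_0^{2\pi}\varphi(\theta)^2\,d\theta \le \frac{1}{2\pi}\int_0^{2\pi}\varphi'(\theta)^2\,d\theta$. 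Then I would bound $\varphi'(\theta) = \tau\,\dotprod{\nabla f(x+\tau u(\theta))}{u'(\theta)}$ in absolute value by $\tau\|\nabla f(x+\tau u(\theta))\|$ since $\|u'(\theta)\|=1$, yielding $\frac{1}{2\pi}\int_0^{2\pi}\varphi(\theta)^2\,d\theta \le \tau^2\cdot\frac{1}{2\pi}\int_0^{2\pi}\|\nabla f(x+\tau u(\theta))\|^2\,d\theta$.

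Finally I would take the outer expectation over the random great circle on both sides; since the left-hand integrals average to $\expect{f(x+\tau e)^2}$ and the right-hand integrals average to $\tau^2\expect{\|\nabla f(x+\tau e)\|^2}$, I would obtain $\expect{f(x+\tau e)^2}\le \tau^2\,\expect{\|\nabla f(x+\tau e)\|^2}$. To recover the stated constant $\tau^2/d$ rather than just $\tau^2$, one refines the gradient bound: the directional derivative uses only the component of $\nabla f$ tangent to the sphere lying in the circle's plane, and averaging the great-circle direction uniformly over all tangent directions at a given point on $S^d(\tau)$ replaces $\|\nabla f\|^2$ by $\tfrac{1}{d}$ times (the tangential part of) $\|\nabla f\|^2$, hence by at most $\tfrac1d\|\nabla f\|^2$; this averaging-over-directions factor of $1/d$ is exactly the dimension gain in the spherical Poincar\'e inequality. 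I expect the main obstacle to be handling this last point cleanly — correctly setting up the joint distribution of (point on sphere, tangent direction) so that the $1/d$ factor emerges, and making sure the centering issue (variance versus raw second moment) is dispatched, either by noting $\expect{f(x+\tau e)}$ can be taken to be $0$ in the application or by carrying the mean through and discarding a nonnegative term.
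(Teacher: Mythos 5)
You should first note that the paper offers no proof of Lemma~\ref{lem:Wirtinger_Poincare} at all: it is listed among the auxiliary facts and used as a black box (it is the Poincar\'e inequality for the uniform measure on the sphere, whose standard derivation goes through the spectral gap of the Laplace--Beltrami operator on the sphere, i.e.\ the first nonzero eigenvalue $(d-1)/\tau^2$, applied after centering). Measured against that, your great-circle scheme has a genuine gap at the centering step. The classical Wirtinger inequality on a circle controls the variance \emph{along that circle}; it requires $\varphi$ to have zero mean on that particular circle. Your justification --- that by rotational symmetry the mean of $\varphi$ over $\theta$ equals the global mean $\expect{f(x+\tau e)}$ --- is false: for $d=3$, $\tau=1$, $x=0$, $f(y)=y_1^2$, the spherical mean is $1/3$, while the mean over the great circle in the $(y_1,y_2)$-plane is $1/2$ and over the $(y_2,y_3)$-plane it is $0$. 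Writing $\bar{\varphi}_C$ for the mean of $f$ over the circle $C$, what your argument yields after averaging over circles is a bound on $\Exp_C[\mathrm{Var}_\theta(\varphi)]$ only; in the decomposition $\expect{f(x+\tau e)^2}=\Exp_C[\mathrm{Var}_\theta(\varphi)]+\Exp_C[\bar{\varphi}_C^2]$ the between-circle term $\Exp_C[\bar{\varphi}_C^2]$ is left completely uncontrolled, and it does not vanish even when $\expect{f(x+\tau e)}=0$. So averaging one-dimensional Wirtinger inequalities over great circles, as you set it up, cannot deliver the spherical Poincar\'e inequality; one needs the spectral (spherical-harmonics) argument, or a two-point/path argument with an explicit kernel computation, not a per-circle reduction.

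Two further points. First, the constant you hope to recover does not come out of the direction-averaging step: conditioned on the point $x+\tau e$, the tangent direction $u'(\theta)$ of a uniformly chosen great circle through it is uniform on the unit sphere of the $(d-1)$-dimensional tangent space, so $\Exp\left[\dotprod{\nabla f(x+\tau e)}{u'(\theta)}^2\,\middle|\, e\right]=\frac{1}{d-1}\norms{\nabla_T f(x+\tau e)}^2$, where $\nabla_T f$ is the tangential component; this gives $\tau^2/(d-1)$, not $\tau^2/d$. Moreover, since a differentiable $f$ can have purely tangential gradient on the sphere (e.g.\ a smoothed version of $f(y)=\dotprod{a}{y}/\norms{y}$, for which the centered left-hand side equals $\|a\|^2/d$ while $\expect{\norms{\nabla f}^2}=\|a\|^2(d-1)/d$), the constant $1/(d-1)$ is the best any correct argument can give for the ambient gradient; the $1/d$ in the statement is the form customarily quoted in the zeroth-order literature and is immaterial for the paper's rates, but you cannot expect to derive it exactly. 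Second, your observation about centering is correct and relevant: as literally stated the inequality fails for constant $f$, and in the paper it is applied to $f(z^*+\tau e,\xi)-f(z^*-\tau e,\xi)$, whose spherical mean vanishes by the symmetry $e\mapsto-e$; but this zero mean is a global property of the sphere and must be exploited globally (spectrally), not circle by circle as in your plan.
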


\subsection{Stochastic extra-gradient algorithm with bias: missing proofs}
In this section, we provide full proofs and missing details from Section~\ref{sec:biased_gradient} on \algname{S-SEG}.

\begin{lemmasec}\label{lem:main_lemma_on_ass3}
	Let Assumption~\ref{as:unified_assumption_general} hold with $A\leq \frac{1}{2}$ and $\rho > C \ge 0$. Then a family of methods~\eqref{eq:general_seg_method} for all $K\ge 0$ has the following convergence rate:
	\begin{equation}
		\Exp \left[\|z^{K+1} - z^*\|^2\right] \leq  (1+C-\rho)\Exp\left[\|z^K - z^*\|^2\right] + D_1 + D_2 + (E_1 + E_2)\zeta^2 - (H_1 + H_2) \zeta R. \label{eq:main_lemma_ass3_one_step}
	\end{equation}
\end{lemmasec}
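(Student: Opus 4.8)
\textbf{Proof plan for Lemma~\ref{lem:main_lemma_on_ass3}.}
The plan is to start from the update rule~\eqref{eq:general_seg_method} for $z^{K+1}$, namely $z^{K+1} = z^K - \gamma_{2,\xi^K}\mathbf{g^{K+\frac{1}{2}}}$, and expand the squared distance to $z^*$. Concretely, I would write
\begin{equation*}
\|z^{K+1} - z^*\|^2 = \|z^K - z^*\|^2 - 2\gamma_{2,\xi^K}\langle \mathbf{g^{K+\frac{1}{2}}}, z^K - z^*\rangle + \gamma_{2,\xi^K}^2\|\mathbf{g^{K+\frac{1}{2}}}\|^2,
\end{equation*}
and then take the conditional expectation $\Exp_{\xi^K}[\,\cdot\,]$. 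By the very definition of $P_K = \Exp_{\xi^K}\left[\gamma_{2,\xi^K}\langle \mathbf{g^{K+\frac{1}{2}}}, z^K - z^*\rangle\right]$, the middle term contributes exactly $-2P_K$, and Assumption~\ref{as:unified_assumption_general} provides the bound~\eqref{eq:second_moment_bound_general} on $\Exp_{\xi^K}\left[\gamma_{2,\xi^K}^2\|\mathbf{g^{K+\frac{1}{2}}}\|^2\right]$ in terms of $2AP_K$, $C\|z^K-z^*\|^2$, and the constants. This gives
\begin{equation*}
\Exp_{\xi^K}\left[\|z^{K+1}-z^*\|^2\right] \leq \|z^K-z^*\|^2 + (2A - 2)P_K + C\|z^K-z^*\|^2 + D_1 + E_1\zeta^2 - H_1\zeta R.
\end{equation*}

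The next step is to control the term $(2A-2)P_K$. Since $A \leq \frac{1}{2}$, we have $2A - 2 \leq -1 < 0$, so $(2A-2)P_K \leq -P_K$ provided $P_K \geq 0$; more carefully, I would only need $(2A-2)P_K \leq -P_K$, which holds whenever $P_K \geq 0$, but to be safe one uses the lower bound~\eqref{eq:P_k_general} directly. Using $2A-2 \leq -1$ and then substituting the lower bound $P_K \geq \rho\|z^K-z^*\|^2 + JG_k - D_2 - E_2\zeta^2 + H_2\zeta R$ from~\eqref{eq:P_k_general} (and dropping the non-negative term $JG_k \geq 0$), we get
\begin{equation*}
(2A-2)P_K \leq -P_K \leq -\rho\|z^K-z^*\|^2 + D_2 + E_2\zeta^2 - H_2\zeta R.
\end{equation*}
Combining this with the previous display yields
\begin{equation*}
\Exp_{\xi^K}\left[\|z^{K+1}-z^*\|^2\right] \leq (1 + C - \rho)\|z^K-z^*\|^2 + D_1 + D_2 + (E_1+E_2)\zeta^2 - (H_1+H_2)\zeta R,
\end{equation*}
and finally taking the full expectation over all randomness gives~\eqref{eq:main_lemma_ass3_one_step}.

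The main subtlety I expect is the sign-handling around the step $(2A-2)P_K \leq -P_K$: this is only valid if $P_K \geq 0$ (otherwise multiplying by a number in $[-1,0)$ that is more negative could reverse the inequality), so one must either argue $P_K \geq 0$ or, cleaner, split as $(2A-2)P_K = -P_K + (2A-1)P_K$ and bound $(2A-1)P_K \leq 0$ using $A \leq \frac{1}{2}$ together with $P_K \geq 0$ — which again needs $P_K \geq 0$. In the regime of interest $\rho > C \geq 0$ and the lower bound~\eqref{eq:P_k_general} with the $\zeta R$ and $G_k$ terms makes the positivity of $P_K$ plausible but not automatic; the honest route is to use~\eqref{eq:P_k_general} in the form $P_K \geq \rho\|z^K-z^*\|^2 - D_2 - E_2\zeta^2 + H_2\zeta R + JG_k$ and apply it only where $-P_K$ appears, without ever needing $P_K \geq 0$ as a standalone fact, since $(2A-2)P_K = (2A-2)P_K$ and $2A-2 \in [-2,-1]$ means $(2A-2)P_K \leq -P_K$ exactly when $P_K \geq 0$ — so the cleanest argument writes $(2A-2) P_K \le - P_K$ whenever $P_K\ge 0$ and separately observes that if $P_K < 0$ the bound~\eqref{eq:P_k_general} is still used verbatim after noting $(2A-2)P_K \le 0 \le -P_K + (\text{positive constants})$. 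I would present the write-up by first establishing $P_K \geq 0$ is not needed and invoking~\eqref{eq:P_k_general} directly to substitute for $-P_K$, keeping the constant bookkeeping transparent.
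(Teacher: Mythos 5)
Your proposal follows essentially the same route as the paper's proof: expand $\|z^{K+1}-z^*\|^2$ via the update \eqref{eq:general_seg_method}, take the conditional expectation to produce $-2P_K$, bound the second moment by \eqref{eq:second_moment_bound_general}, use $A\le\tfrac12$ to reduce $(2A-2)P_K$ to $-P_K$, substitute the lower bound \eqref{eq:P_k_general}, drop the non-negative $JG_K$ term, and take full expectation — the constant bookkeeping matches the lemma exactly. The sign subtlety you flag is real but is equally present (and silently passed over) in the paper's own proof, which performs the identical step $-2P_K+2AP_K\le -P_K$; however, your proposed patch for the case $P_K<0$ is incorrect as stated, since if $P_K<0$ and $A\le\tfrac12$ then $(2A-2)P_K>0$, so the claim $(2A-2)P_K\le 0$ fails — the clean fix would instead be to apply \eqref{eq:P_k_general} to the full coefficient, writing $-(2-2A)P_K\le(2-2A)\bigl(-\rho\|z^K-z^*\|^2 - JG_K + D_2+E_2\zeta^2 - H_2\zeta R\bigr)$, at the cost of a factor up to $2$ on the $D_2$, $E_2$, $H_2$ terms.
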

\begin{proof}
     Since $z^{k+1} = z^k - \gamma_{2, \xi^k} \mathbf{g^{k+\frac{1}{2}}}$, we have
     \begin{eqnarray*}
         \|z^{k+1} - z^*\|^2 &=& \|z^k - \gamma_{2, \xi^k} \mathbf{g^{k+\frac{1}{2}}} - z^*\|^2\\
         &=& \|z^k - z^*\|^2 - 2\gamma_{2, \xi^k}\langle \mathbf{g^{k+\frac{1}{2}}}, z^k - z^*\rangle + \gamma_{2, \xi^k}^2\|\mathbf{g^{k+\frac{1}{2}}}\|^2.
     \end{eqnarray*}
     Taking the expectation, conditioned on $\xi^k$,  using our Assumption~\ref{as:unified_assumption_general} and the definition of\\$P_k = \Exp_{\xi^k}\left[\gamma_{2, \xi^k}\langle \mathbf{g^{k+\frac{1}{2}}}, z^k - z^* \rangle\right]$, we continue our derivation:
     \begin{eqnarray*}
         \Exp_{\xi^k}\left[\|z^{k+1} - z^*\|^2\right] &=& \|z^k - z^*\|^2 - 2P_{k}  + \Exp_{\xi^k}[\gamma_{2, \xi^k}^2\|\mathbf{g^{k+\frac{1}{2}}}\|^2]\\
         &\overset{\eqref{eq:second_moment_bound_general}}{\le}& \|z^k - z^*\|^2 - 2P_{k} + 2AP_{k} + C\|z^k - z^*\|^2 + D_1 + E_1\zeta^2 - H_1 \zeta R\\
         &\overset{A \le \frac{1}{2}}{\leq}& (1+C) \|z^k - z^*\|^2 - P_k + D_1 + E_1\zeta^2 - H_1 \zeta R  \\
         &\overset{\eqref{eq:P_k_general}}{\le}& (1+C-\rho)\|z^k - z^*\|^2 - JG_k + D_1 + D_2 + (E_1 + E_2)\zeta^2 - (H_1 + H_2) \zeta R  .
     \end{eqnarray*}
     Next, we take the full expectation from the both sides
     \begin{equation*}
         \Exp\left[\|z^{k+1} - z^*\|^2\right] \le (1+C-\rho)\Exp\left[\|z^k - z^*\|^2\right] - J\Exp[G_k] + D_1 + D_2 +(E_1 + E_2)\zeta^2 - (H_1 + H_2) \zeta R. 
     \end{equation*}
     If $\rho > C \ge 0$, then in the above inequality we can get rid of the non-positive term $(-J\Exp[G_k])$
    \begin{equation*}
         \Exp\left[\|z^{k+1} - z^*\|^2\right] \le (1+C-\rho)\Exp\left[\|z^k - z^*\|^2\right] + D_1 + D_2 + (E_1 + E_2)\zeta^2 - (H_1 + H_2) \zeta R
    \end{equation*}
    and get \eqref{eq:main_lemma_ass3_one_step}.
\end{proof}

The following two lemmas will be used in the proof of Theorem~\ref{thm:S-SEG_convergence_rate} to establish that Assumption~\ref{as:unified_assumption_general} is satisfied.

\begin{lemmasec}\label{lem:second_moment_bound_S_SEG}
	Let Assumptions~\ref{as:lipschitzness},~\ref{as:str_monotonicity} and~\ref{as:stepsize_and_mu_conditions} hold. If $\gamma_{1,\xi^k}$ satisfies
	\begin{equation*}
	    \gamma_{1,\xi^k} \leq \frac{1}{4|\mu_{\xi^k}| + 2 L_{\xi^k}},
	\end{equation*}
    then $\mathbf{g^{k+\frac{1}{2}}}= F_{\xi^k}(z^{k+\frac{1}{2}}) + b(z^{k+\frac{1}{2}})$, where $z^{k+\frac{1}{2}} = z^k - \gamma_{1,\xi^k}F(z^k) - \gamma_{1,\xi^k}b(z^k)$ satisfies the following inequality:
    \begin{eqnarray}
    \Exp_{\xi^k}\left[\gamma_{1,\xi^k}^2\|\mathbf{g^{k+\frac{1}{2}}}\|^2\right] \leq 8\widehat{P}_{k} + 20 \Exp_{\xi^k}\left[\gamma_{1,\xi^k}^2\|F_{\xi^k}(z^*)\|^2\right] + 10 \Exp_{\xi^k}\left[ \gamma_{1,\xi^k}^2 \zeta^2 \right]  - 8\Exp_{\xi^k}\left[\gamma_{1,\xi^k} \langle z^k - z^*, b(z^{k+\frac{1}{2}})\rangle \right],\label{eq:second_moment_bound_from_lemma_S_SEG}
    \end{eqnarray}
where $\widehat{P}_{k} = \Exp_{\xi^k}\left[\gamma_{1,\xi^k}\langle \mathbf{g^{k+\frac{1}{2}}}, z^k - z^* \rangle\right]$.
\end{lemmasec}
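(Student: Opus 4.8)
\textbf{Proof plan for Lemma~\ref{lem:second_moment_bound_S_SEG}.}

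The goal is to bound $\Exp_{\xi^k}[\gamma_{1,\xi^k}^2\|\mathbf{g^{k+\frac12}}\|^2]$, where $\mathbf{g^{k+\frac12}} = F_{\xi^k}(z^{k+\frac12}) + b(z^{k+\frac12})$. The plan is to start from the triangle-type inequality $\|\mathbf{g^{k+\frac12}}\|^2 \le 2\|F_{\xi^k}(z^{k+\frac12})\|^2 + 2\|b(z^{k+\frac12})\|^2$ and then further split $F_{\xi^k}(z^{k+\frac12})$ around $z^*$: $\|F_{\xi^k}(z^{k+\frac12})\|^2 \le 2\|F_{\xi^k}(z^{k+\frac12}) - F_{\xi^k}(z^*)\|^2 + 2\|F_{\xi^k}(z^*)\|^2$. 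The first of these terms I would control by $L_{\xi^k}$-Lipschitzness (Assumption~\ref{as:lipschitzness}), giving $L_{\xi^k}^2\|z^{k+\frac12} - z^*\|^2$, and the $b(z^{k+\frac12})$-term by the bias bound $\|b(\cdot)\|^2 \le \zeta^2$ from Definition~\ref{def:b_oracle}. Multiplying through by $\gamma_{1,\xi^k}^2$, the trouble is that a term $\gamma_{1,\xi^k}^2 L_{\xi^k}^2\|z^{k+\frac12} - z^*\|^2$ appears, which is not directly in the desired form; this will need to be re-expanded.

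Next I would expand $z^{k+\frac12} - z^* = (z^k - z^*) - \gamma_{1,\xi^k}F_{\xi^k}(z^k) - \gamma_{1,\xi^k}b(z^k)$ and use $\gamma_{1,\xi^k}^2 L_{\xi^k}^2 \le \frac14$ (which follows from the stepsize condition $\gamma_{1,\xi^k} \le \frac{1}{4|\mu_{\xi^k}| + 2L_{\xi^k}} \le \frac{1}{2L_{\xi^k}}$) to absorb the resulting $\|z^k - z^*\|^2$-type contributions. The strong monotonicity Assumption~\ref{as:str_monotonicity}, together with the facts~\eqref{eq:S_SEG_AS_stepsize_1} and~\eqref{eq:S_SEG_AS_stepsize_and_mu} from Assumption~\ref{as:stepsize_and_mu_conditions}, is what lets me re-express the cross terms $\langle F_{\xi^k}(z^k) - F_{\xi^k}(z^*), z^k - z^*\rangle$ and pass from $\|z^{k+\frac12} - z^*\|^2$ back to quantities involving $\widehat{P}_k = \Exp_{\xi^k}[\gamma_{1,\xi^k}\langle \mathbf{g^{k+\frac12}}, z^k - z^*\rangle]$. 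The indicator-weighted condition~\eqref{eq:S_SEG_AS_stepsize_and_mu} is precisely designed to handle the sign subtlety when some $\mu_{\xi^k}$ are negative, so I would track the $\obf_{\{\mu_{\xi^k}\ge 0\}}$ and $\obf_{\{\mu_{\xi^k}<0\}}$ cases separately and use the factor-$4$ slack there. The bias term $b(z^{k+\frac12})$ also contributes a cross term $-\gamma_{1,\xi^k}\langle z^k - z^*, b(z^{k+\frac12})\rangle$, which I would carry along untouched — it is exactly the last term on the right-hand side of~\eqref{eq:second_moment_bound_from_lemma_S_SEG}.

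The final step is bookkeeping: collect all contributions, take conditional expectation $\Exp_{\xi^k}[\cdot]$, identify $\widehat{P}_k$, and verify the numerical constants $8$, $20$, $10$, $8$ come out correctly from the chain of factor-$2$ splittings and the $\frac14$ absorptions. The bias term $b(z^k)$ inside $z^{k+\frac12}$ needs a small additional application of $\|b(z^k)\|^2 \le \zeta^2$, which feeds into the $10\,\Exp_{\xi^k}[\gamma_{1,\xi^k}^2\zeta^2]$ coefficient. I expect the main obstacle to be the careful handling of the quasi-strong-monotonicity argument with possibly negative $\mu_{\xi^k}$ — making sure the stepsize bound $\gamma_{1,\xi^k} \le \frac{1}{4|\mu_{\xi^k}| + 2L_{\xi^k}}$ is used with the right slack so that all the leftover $\|z^k - z^*\|^2$ terms are nonpositive or absorbable and do not survive into~\eqref{eq:second_moment_bound_from_lemma_S_SEG} — rather than the mechanical norm-splitting, which is routine.
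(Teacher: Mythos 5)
There is a genuine gap, and it sits exactly where you flag the ``main obstacle.'' Your plan bounds $\gamma_{1,\xi^k}^2\|\mathbf{g^{k+\frac{1}{2}}}\|^2$ by norm splitting plus Lipschitzness, which inevitably produces a term of the form $4\gamma_{1,\xi^k}^2L_{\xi^k}^2\|z^{k+\frac{1}{2}}-z^*\|^2\le\|z^{k+\frac{1}{2}}-z^*\|^2$, and you then hope to convert this (and the $\|z^k-z^*\|^2$ contributions from expanding $z^{k+\frac{1}{2}}-z^*$) ``back to quantities involving $\widehat{P}_k$'' via strong monotonicity and Assumption~\ref{as:stepsize_and_mu_conditions}. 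But no such conversion with absolute constants exists: the target inequality \eqref{eq:second_moment_bound_from_lemma_S_SEG} contains no $\|z^k-z^*\|^2$ term at all, and the only candidate to absorb it, $8\widehat{P}_k$, dominates $\|z^k-z^*\|^2$ only at the price of a factor of order $1/(\gamma_{1,\xi^k}\mu_{\xi^k})$ (that is the content of the separate Lemma~\ref{lem:P_k_bound_S_SEG}, where $\widehat{\rho}\sim\gamma\mu$), and fails outright when $\mu_{\xi^k}\le 0$, which the lemma explicitly permits. So the route as sketched cannot produce the parameter-free constants $8,20,10,8$, and the sign/indicator bookkeeping you defer to Assumption~\ref{as:str_monotonicity} and \eqref{eq:S_SEG_AS_stepsize_and_mu} cannot rescue it.

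The missing idea in the paper's proof is to never bound the second moment directly, but to expand the squared distance of the virtual iterate $z^k-\gamma_{1,\xi^k}\mathbf{g^{k+\frac{1}{2}}}$ to $z^*$: the identity $\|z^k-\gamma_{1,\xi^k}\mathbf{g^{k+\frac{1}{2}}}-z^*\|^2=\|z^k-z^*\|^2-2\gamma_{1,\xi^k}\langle\mathbf{g^{k+\frac{1}{2}}},z^k-z^*\rangle+\gamma_{1,\xi^k}^2\|\mathbf{g^{k+\frac{1}{2}}}\|^2$ makes $\widehat{P}_k$ appear with coefficient $-2$ as a cross term. The same left-hand side is then bounded from above (using \eqref{eq:S_SEG_AS_stepsize_1}, strong monotonicity at the shifted point, Lipschitzness only on differences $F_{\xi^k}(z^k)-F_{\xi^k}(z^{k+\frac{1}{2}})$, and the stepsize bound \eqref{eq:S_SEG_AS_stepsize_upper_bound} to discard a term proportional to $(1-4\gamma_{1,\xi^k}|\mu_{\xi^k}|-4\gamma_{1,\xi^k}^2L_{\xi^k}^2)\|\mathbf{g}^k\|^2$) by an expression in which $\|z^k-z^*\|^2$ cancels identically and the second moment reappears with coefficient $\tfrac{3}{4}$. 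Rearranging leaves $\tfrac{1}{4}\Exp_{\xi^k}[\gamma_{1,\xi^k}^2\|\mathbf{g^{k+\frac{1}{2}}}\|^2]$ on the left, and multiplying by $4$ yields precisely the constants in \eqref{eq:second_moment_bound_from_lemma_S_SEG}. Your proposal contains the right ingredients (Lipschitz splitting around $z^*$, bias bound, bias cross term kept intact) but without this virtual-iterate expansion it has no mechanism to generate $+8\widehat{P}_k$ and to eliminate $\|z^k-z^*\|^2$, so the argument as planned would not close.
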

\begin{proof}
Using the step formula $z^{k+1} = z^k - \gamma_{1,\xi^k} \mathbf{g^{k+\frac{1}{2}}} = z^k - \gamma_{1,\xi^k} F(z^{k+\frac{1}{2}}) -  \gamma_{1,\xi^k} b(z^{k+\frac{1}{2}})$, we get:
	\begin{eqnarray}\label{eq:second_moment_bound_S_SEG_technical_1}
		\|z^{k+1} - z^*\|^2 &=& \|z^k - z^*\|^2 -2\gamma_{1,\xi^k}\langle z^k-z^*, \mathbf{g^{k+\frac{1}{2}}} \rangle + \gamma_{1,\xi^k}^2 \|\mathbf{g^{k+\frac{1}{2}}}\|^2\\
		&=& \|z^k - z^*\|^2 - 2\gamma_{1,\xi^k}\left\langle z^k - \gamma_{1,\xi^k} F_{\xi^k}(z^k) - \gamma_{1,\xi^k} b(z^{k}) - z^*, F_{\xi^k} (z^{k+\frac{1}{2}} ) - F_{\xi^k}(z^*) \right\rangle\notag \\
        &&\quad - 2\gamma_{1,\xi^k}^2 \langle F_{\xi^k}(z^k), \mathbf{g^{k+\frac{1}{2}}} - F_{\xi^k}(z^*) \rangle - 2\gamma_{1,\xi^k} \langle z^k - \gamma_{1,\xi^k}F_{\xi^k}(z^{k}) - z^*, b(z^{k+\frac{1}{2}}) \rangle \notag\\
	&&\quad - 2\gamma_{1,\xi^k}^2 \langle b(z^{k}), F_{\xi^k}(z^{k+\frac{1}{2}}) - F_{\xi^k}(z^*) \rangle - 2\gamma_{1,\xi^k}\langle z^k - z^*, F_{\xi^k}(z^*) \rangle + \gamma_{1,\xi^k}^2 \|\mathbf{g^{k+\frac{1}{2}}}\|^2.\notag
	\end{eqnarray}
 Taking the expectation w.r.t. $\xi^k$ from the above identity, using the following equalities:\\\\
 $\Exp_{\xi^k}[\gamma_{1,\xi^k}\langle z^k - z^*, F_{\xi^k}(z^*) \rangle] = \langle z^k - z^*, \Exp_{\xi^k}[\gamma_{1,\xi^k}F_{\xi^k}(z^*)] \rangle \overset{\eqref{eq:S_SEG_AS_stepsize_1}}{=} 0$ ,\\\\
 $\mathbf{g^{k+\frac{1}{2}}}= F_{\xi^k}\left(z^{k} - \gamma_{1,\xi^k} F_{\xi^k}(z^k) - \gamma_{1,\xi^k} b(z^{k})\right) + b(z^{k+\frac{1}{2}})$ and $(\mu_\xi,z^*)$ - strong monotonicity of $F_{\xi}(z)$,\\\\
 we derive the following upper-bound:

	\begin{eqnarray*}
		\Exp_{\xi^k}\left[\|z^{k+1} - z^*\|^2\right] &\leq& \|z^k - z^*\|^2 - 2 \Exp_{\xi^k}\left[\gamma_{1,\xi^k}\langle z^k - \gamma_{1,\xi^k} F_{\xi^k}(z^k) - \gamma_{1,\xi^k} b(z^{k}) - z^*, F_{\xi^k} (z^{k+\frac{1}{2}} ) - F_{\xi^k}(z^*) \rangle\right]\\
        &&\quad  - 2 \Exp_{\xi^k}\left[\gamma_{1,\xi^k}^2 \langle F_{\xi^k}(z^k), \mathbf{g^{k+\frac{1}{2}}} - F_{\xi^k}(z^*) \rangle\right] - 2  \Exp_{\xi^k}\left[\gamma_{1,\xi^k} \langle z^k - \gamma_{1,\xi^k}F_{\xi^k}(z^{k}) - z^*, b(z^{k+\frac{1}{2}}) \rangle\right] \\
		&&\quad - 2\Exp_{\xi^k}\left[\gamma_{1,{\xi^k}}^2\langle b(z^{k}), F_{\xi^k}(z^{k+\frac{1}{2}}) - F_{\xi^k}(z^*) \rangle\right] + \Exp_{\xi^k}\left[\gamma_{1,{\xi^k}}^2\|\mathbf{g^{k+\frac{1}{2}}}\|^2\right] \\
		&\overset{\ref{as:str_monotonicity}}{\le}& \|z^k - z^*\|^2 - 2\Exp_{\xi^k}\left[\gamma_{1,\xi^k}\mu_{\xi^k}\|z^k - \gamma_{1,\xi^k} F_{\xi^k}(z^k) - \gamma_{1,\xi^k} b(z^{k}) - z^*\|^2\right]\\
        &&\quad  - 2 \Exp_{\xi^k}\left[\gamma_{1,\xi^k}^2 \langle F_{\xi^k}(z^k), \mathbf{g^{k+\frac{1}{2}}} - F_{\xi^k}(z^*) \rangle\right] - 2  \Exp_{\xi^k}\left[\gamma_{1,\xi^k} \langle z^k - \gamma_{1,\xi^k}F_{\xi^k}(z^{k}) - z^*, b(z^{k+\frac{1}{2}}) \rangle\right] \\
		&&\quad - 2\Exp_{\xi^k}\left[\gamma_{1,{\xi^k}}^2\langle b(z^{k}), F_{\xi^k}(z^{k+\frac{1}{2}}) - F_{\xi^k}(z^*) \rangle\right] + \Exp_{\xi^k}\left[\gamma_{1,{\xi^k}}^2\|\mathbf{g^{k+\frac{1}{2}}}\|^2\right] \\
		&=& \|z^k - z^*\|^2 - 2\Exp_{\xi^k}\left[\gamma_{1,\xi^k}\mu_{\xi^k}\|z^k - \gamma_{1,\xi^k} F_{\xi^k}(z^k) - \gamma_{1,\xi^k} b(z^{k}) - z^*\|^2\right]\\
		&&\quad  - 2  \Exp_{\xi^k}\left[\gamma_{1,\xi^k} \langle z^k  - z^*, b(z^{k+\frac{1}{2}}) \rangle\right] \\
		&&\quad - 2\Exp_{\xi^k}\left[\gamma_{1,{\xi^k}}^2\langle F_{\xi^k}(z^k) + b(z^{k}), F_{\xi^k}(z^{k+\frac{1}{2}}) - F_{\xi^k}(z^*) \rangle\right] + \Exp_{\xi^k}\left[\gamma_{1,{\xi^k}}^2\|\mathbf{g^{k+\frac{1}{2}}}\|^2\right] \\
        &=& \|z^k - z^*\|^2 - 2\Exp_{\xi^k}\left[\gamma_{1,\xi^k}\mu_{\xi^k}\obf_{\{\mu_{\xi^k} \ge 0\}}\|z^k - \gamma_{1,\xi^k} F_{\xi^k}(z^k) - \gamma_{1,\xi^k} b(z^{k}) - z^*\|^2\right]\\
		&&\quad - 2\Exp_{\xi^k}\left[\gamma_{1,\xi^k}\mu_{\xi^k}\obf_{\{\mu_{\xi^k} < 0\}}\|z^k - \gamma_{1,\xi^k} F_{\xi^k}(z^k) - \gamma_{1,\xi^k} b(z^{k}) - z^*\|^2\right]\\
		&&\quad  - 2  \Exp_{\xi^k}\left[\gamma_{1,\xi^k} \langle z^k  - z^*, b(z^{k+\frac{1}{2}}) \rangle\right] \\
		&&\quad - 2\Exp_{\xi^k}\left[\gamma_{1,{\xi^k}}^2\langle F_{\xi^k}(z^k) + b(z^{k}), F_{\xi^k}(z^{k+\frac{1}{2}}) - F_{\xi^k}(z^*) \rangle\right] + \Exp_{\xi^k}\left[\gamma_{1,{\xi^k}}^2\|\mathbf{g^{k+\frac{1}{2}}}\|^2\right] \\
		&\overset{\eqref{eq:a+b_lower}}{\leq}& \|z^k - z^*\|^2 - \Exp_{\xi^k}[\gamma_{1,\xi^k}\mu_{\xi^k}\obf_{\{\mu_{\xi^k} \ge 0\}}]\|z^k - z^*\|^2\\
		&&\quad + 2\Exp_{\xi^k}[\gamma_{1,\xi^k}^3\mu_{\xi^k}\obf_{\{\mu_{\xi^k} \ge 0\}}\|\mathbf{g^k}\|^2]\\
		&&\quad - 2\Exp_{\xi^k}\left[\gamma_{1,\xi^k}\mu_{\xi^k}\obf_{\{\mu_{\xi^k} < 0\}}\|z^k - z^* - \gamma_{1,\xi^k}\mathbf{g^k}\|^2\right]\\
		&&\quad  - 2  \Exp_{\xi^k}\left[\gamma_{1,\xi^k} \langle z^k  - z^*, b(z^{k+\frac{1}{2}}) \rangle\right] \\
		&&\quad - 2\Exp_{\xi^k}\left[\gamma_{1,{\xi^k}}^2\langle F_{\xi^k}(z^k) + b(z^{k}), F_{\xi^k}(z^{k+\frac{1}{2}}) - F_{\xi^k}(z^*) \rangle\right] + \Exp_{\xi^k}\left[\gamma_{1,{\xi^k}}^2\|\mathbf{g^{k+\frac{1}{2}}}\|^2\right] \\
		&\overset{\eqref{eq:a+b}}{\leq}& \|z^k - z^*\|^2 - \Exp_{\xi^k}[\gamma_{1,\xi^k}\mu_{\xi^k}\obf_{\{\mu_{\xi^k} \ge 0\}} + 4\gamma_{1,\xi^k}\mu_{\xi^k}\obf_{\{\mu_{\xi^k} < 0\}}]\|z^k-z^*\|^2\\
		&&\quad + 2\Exp_{\xi^k}\left[\gamma_{1,{\xi^k}}^3(\mu_{\xi^k}\obf_{\{\mu_{\xi^k} \ge 0\}} - 2\mu_{\xi^k}\obf_{\{\mu_{\xi^k} < 0\}}))\|\mathbf{g^k}\|^2\right]\\
	    &&\quad  - 2  \Exp_{\xi^k}\left[\gamma_{1,\xi^k} \langle z^k  - z^*, b(z^{k+\frac{1}{2}}) \rangle\right] \\
		&&\quad - 2\Exp_{\xi^k}\left[\gamma_{1,{\xi^k}}^2\langle F_{\xi^k}(z^k) + b(z^{k}) , F_{\xi^k}(z^{k+\frac{1}{2}}) - F_{\xi^k}(z^*) \rangle\right] + \Exp_{\xi^k}\left[\gamma_{1,{\xi^k}}^2\|\mathbf{g^{k+\frac{1}{2}}}\|^2\right] \\
		&\overset{\eqref{eq:inner_product_representation}}{=}& \|z^k - z^*\|^2 - \Exp_{\xi^k}[\gamma_{1,\xi^k}\mu_{\xi^k}\obf_{\{\mu_{\xi^k} \ge 0\}} + 4\gamma_{1,\xi^k}\mu_{\xi^k}\obf_{\{\mu_{\xi^k} < 0\}}]\|z^k - z^*\|^2\\
		&&\quad - \Exp_{\xi^k}\left[\gamma_{1,{\xi^k}}^2(1 - 2\gamma_{1,\xi^k}(\mu_{\xi^k}\obf_{\{\mu_{\xi^k} \ge 0\}} - 2\mu_{\xi^k}\obf_{\{\mu_{\xi^k} < 0\}}))\|\mathbf{g^k}\|^2\right]\\
		&&\quad - \Exp_{\xi^k}\left[\gamma_{1,{\xi^k}}^2\|F_{\xi^k}(z^{k+\frac{1}{2}}) - F_{\xi^k}(z^*)\|^2\right]\\
		&&\quad + \Exp_{\xi^k}\left[\gamma_{1,{\xi^k}}^2\|\mathbf{g^k} - F_{\xi^k}(z^{k+\frac{1}{2}}) + F_{\xi^k}(z^*)\|^2\right]\\
  	    &&\quad  - 2  \Exp_{\xi^k}\left[\gamma_{1,\xi^k} \langle z^k  - z^*, b(z^{k+\frac{1}{2}}) \rangle\right]  + \Exp_{\xi^k}\left[\gamma_{1,{\xi^k}}^2\|\mathbf{g^{k+\frac{1}{2}}}\|^2\right] \\
		&\overset{\eqref{eq:S_SEG_AS_stepsize_and_mu}}{\le}& \|z^k -
  z^*\|^2 - \Exp_{\xi^k}\left[\gamma_{1,{\xi^k}}^2(1 - 4\gamma_{1,\xi^k}|\mu_{\xi^k}|)\|\mathbf{g^k}\|^2\right]\\
		&&\quad - \Exp_{\xi^k}\left[\gamma_{1,{\xi^k}}^2\|F_{\xi^k}(z^{k+\frac{1}{2}}) - F_{\xi^k}(z^*)\|^2\right]\\
		&&\quad + \Exp_{\xi^k}\left[\gamma_{1,{\xi^k}}^2\|\mathbf{g^k} - F_{\xi^k}(z^{k+\frac{1}{2}}) + F_{\xi^k}(z^*)\|^2\right]\\
  	    &&\quad  - 2  \Exp_{\xi^k}\left[\gamma_{1,\xi^k} \langle z^k  - z^*, b(z^{k+\frac{1}{2}}) \rangle\right]  + \Exp_{\xi^k}\left[\gamma_{1,{\xi^k}}^2\|\mathbf{g^{k+\frac{1}{2}}}\|^2\right],
	\end{eqnarray*}
where in the last inequality we use $\mu_{\xi^k}\obf_{\{\mu_{\xi^k} \ge 0\}} - 2\mu_{\xi^k}\obf_{\{\mu_{\xi^k} < 0\}} \le 2|\mu_{\xi^k}|$.\footnote{When all $\mu_{\xi} \ge 0$, which is often assumed in the analysis of \algname{S-SEG}, numerical constants in our proof can be tightened. Indeed, in the last step, we can get $- \Exp_{\xi^k}\left[\gamma_{1,{\xi^k}}^2(1 - 2\gamma_{1,\xi^k}\mu_{\xi^k})\|F_{\xi^k}(z^k)\|^2\right]$ instead of $- \Exp_{\xi^k}\left[\gamma_{1,{\xi^k}}^2(1 - 4\gamma_{1,\xi^k}|\mu_{\xi^k}|)\|F_{\xi^k}(z^k)\|^2\right]$.}\\\\
To upper bound the last two terms we use basic inequalities \eqref{eq:a+b_lower} and \eqref{eq:a+b}, and apply $L_{\xi^k}$-Lipschitzness of $F_{\xi^k}(x)$:
    \begin{eqnarray*}
        \Exp_{\xi^k}\left[\|z^{k+1} - z^*\|^2\right] &\overset{\eqref{eq:a+b}}{\leq}& \|z^k - z^*\|^2 - \Exp_{\xi^k}\left[\gamma_{1,\xi^k}^2\left(1 - 4 \gamma_{1,{\xi^k}}|\mu_{\xi^k}|\right)\|\mathbf{g^k}\|^2\right]\\&&
        - \Exp_{\xi^k}\left[\gamma_{1,{\xi^k}}^2\|F_{\xi^k}(z^{k+\frac{1}{2}}) - F_{\xi^k}(z^*)\|^2\right] + 2\Exp_{\xi^k}\left[\gamma_{1,\xi^k}^2\zeta^2\right] \\&& + 2\Exp_{\xi^k}\left[\gamma_{1,\xi^k}^2\|F_{\xi^k}(z^k)-F_{\xi^k}(z^{k+\frac{1}{2}})+F_{\xi^k}(z^*)\|^2\right] 
        \\&& - 2\Exp_{\xi^k}\left[\gamma_{1,\xi^k} \langle z^k - z^*, b(z^{k+\frac{1}{2}})\rangle \right] + \Exp_{\xi^k}\left[\gamma_{1,\xi^k}^2\|\mathbf{g^{k+\frac{1}{2}}}\|^2\right]\\ 
        &\overset{\eqref{eq:a+b}, \eqref{eq:a+b_lower}}{\leq}&
        \|z^k - z^*\|^2 - \Exp_{\xi^k}\left[\gamma_{1,\xi^k}^2\left(1 - 4 \gamma_{1,{\xi^k}}|\mu_{\xi^k}|\right)\|\mathbf{g^k}\|^2\right] \\&& - \frac{1}{2}\Exp_{\xi^k}\left[\gamma_{1,\xi^k}^2 \| F_{\xi^k}(z^{k+\frac{1}{2}})\|^2 \right] + 4\Exp_{\xi^k}\left[ \gamma_{1,\xi^k}^2 \|F_{\xi^k}(z^{k}) - F_{\xi^k}(z^{k+\frac{1}{2}})\|^2\right]\\&& + 5\Exp_{\xi^k}\left[ \gamma_{1,\xi^k}^2 \|F_{\xi^k}(z^{*})\|^2\right] - 2\Exp_{\xi^k}\left[\gamma_{1,\xi^k} \langle z^k - z^*, b(z^{k+\frac{1}{2}})\rangle \right] \\&& + \Exp_{\xi^k}\left[\gamma_{1,\xi^k}^2\|\mathbf{g^{k+\frac{1}{2}}}\|^2\right] + 2\Exp_{\xi^k}\left[\gamma_{1,\xi^k}^2\zeta^2\right]\\ 
         &\overset{\ref{as:lipschitzness}}{\leq}&
         \|z^k - z^*\|^2 - \Exp_{\xi^k}\left[\gamma_{1,\xi^k}^2\left(1 - 4 \gamma_{1,{\xi^k}}|\mu_{\xi^k}|\right)\|\mathbf{g^k}\|^2\right] \\&& -\frac{1}{2}\Exp_{\xi^k}\left[\gamma_{1,\xi^k}^2 \| \mathbf{g^{k+\frac{1}{2}}} - b(z^{k+\frac{1}{2}})\|^2 \right] + 4\Exp_{\xi^k}\left[ \gamma_{1,\xi^k}^4 L_{\xi^k}^2 \|\mathbf{g^k}\|^2\right]\\&& + 5\Exp_{\xi^k}\left[ \gamma_{1,\xi^k}^2 \|F_{\xi^k}(z^{*})\|^2\right] - 2\Exp_{\xi^k}\left[\gamma_{1,\xi^k} \langle z^k - z^*, b(z^{k+\frac{1}{2}})\rangle \right] \\&& + \Exp_{\xi^k}\left[\gamma_{1,\xi^k}^2\|\mathbf{g^{k+\frac{1}{2}}}\|^2\right] + 2\Exp_{\xi^k}\left[\gamma_{1,\xi^k}^2\zeta^2\right]\\ 
        &\overset{\eqref{eq:a+b_lower}}{\leq}&
        \|z^k - z^*\|^2 - \Exp_{\xi^k}\left[\gamma_{1,\xi^k}^2\left(1 - 4 \gamma_{1,{\xi^k}}|\mu_{\xi^k}| - 4\gamma_{1,\xi^k}^2 L_{\xi^k}^2\right)\|\mathbf{g^k}\|^2\right]\\&& + 5\Exp_{\xi^k}\left[ \gamma_{1,\xi^k}^2 \|F_{\xi^k}(z^{*})\|^2\right] - 2\Exp_{\xi^k}\left[\gamma_{1,\xi^k} \langle z^k - z^*, b(z^{k+\frac{1}{2}})\rangle \right] \\&& + \frac{3}{4}\Exp_{\xi^k}\left[\gamma_{1,\xi^k}^2\|\mathbf{g^{k+\frac{1}{2}}}\|^2\right] + \frac{5}{2}\Exp_{\xi^k}\left[\gamma_{1,\xi^k}^2\zeta^2\right]\\ 
        &\overset{\eqref{eq:S_SEG_AS_stepsize_upper_bound}}{\leq}&
        \|z^k - z^*\|^2 + 5\Exp_{\xi^k}\left[ \gamma_{1,\xi^k}^2 \|F_{\xi^k}(z^{*})\|^2\right] - 2\Exp_{\xi^k}\left[\gamma_{1,\xi^k} \langle z^k - z^*, b(z^{k+\frac{1}{2}})\rangle \right] \\&& + \frac{3}{4}\Exp_{\xi^k}\left[\gamma_{1,\xi^k}^2\|\mathbf{g^{k+\frac{1}{2}}}\|^2\right] + \frac{5}{2}\Exp_{\xi^k}\left[\gamma_{1,\xi^k}^2\zeta^2\right]\\
    \end{eqnarray*}\\
Finally, we use the above inequality together with \eqref{eq:second_moment_bound_S_SEG_technical_1}:
    \begin{eqnarray*}
        \| z^k - z^*\|^2 -2\widehat{P}_{k} + \Exp_{\xi^k}\left[\gamma_{1,\xi^k}^2\|\mathbf{g^{k+\frac{1}{2}}}\|^2\right] &\leq&
        \|z^k - z^*\|^2 + 5\Exp_{\xi^k}\left[ \gamma_{1,\xi^k}^2 \|F_{\xi^k}(z^{*})\|^2\right]\\&& - 2\Exp_{\xi^k}\left[\gamma_{1,\xi^k} \langle z^k - z^*, b(z^{k+\frac{1}{2}})\rangle \right]
        + \frac{3}{4}\Exp_{\xi^k}\left[\gamma_{1,\xi^k}^2\|\mathbf{g^{k+\frac{1}{2}}}\|^2\right]\\&& + \frac{5}{2}\Exp_{\xi^k}\left[\gamma_{1,\xi^k}^2\zeta^2\right],
    \end{eqnarray*}
where $\widehat{P}_{k} = \Exp_{\xi^k}\left[\gamma_{1,\xi^k}\langle \mathbf{g^{k+\frac{1}{2}}}, z^k - z^* \rangle\right]$. Rearranging the terms, we obtain \eqref{eq:second_moment_bound_from_lemma_S_SEG}.\\

\end{proof}

\begin{lemmasec}\label{lem:P_k_bound_S_SEG}
    Let Assumptions~\ref{as:lipschitzness},~\ref{as:str_monotonicity} and~\ref{as:stepsize_and_mu_conditions} hold. If $\gamma_{1,\xi^k}$ satisfies \eqref{eq:S_SEG_AS_stepsize_upper_bound}, then\\
    $\mathbf{g^{k+\frac{1}{2}}} = F_{\xi^{k}}(z^{k + \frac{1}{2}}) + b(z^{k+\frac{1}{2}}),$ where $z^{k+\frac{1}{2}} = z^k - \gamma_{1,\xi^k} F_{\xi^k}(z^k) - 
    \gamma_{1,\xi^k} b(z^{k}),$ satisfies the following inequality:
    \begin{equation*}
        \widehat{P}_{k} \ge \widehat{\rho}\|z^k - z^*\|^2 + \frac{1}{2}\widehat{G}_k - \frac{5}{2}\Exp_{\xi^k}\left[\gamma_{1,\xi^k}^2\|F_\xi^k(z^*)\|^2\right]
        -\Exp_{\xi^k}\left[\gamma_{1,\xi^k}^2\zeta^2\right] + \Exp_{\xi^k}\left[\gamma_{1,\xi^k}\langle b(z^{k+\frac{1}{2}}), z^k - z^*\rangle\right],
    \end{equation*}
    where:
    \begin{eqnarray*}
        \widehat{P}_{k} &=& \Exp_{\xi^k}\left[\gamma_{1,\xi^k}\langle \mathbf{g^{k+\frac{1}{2}}}, z^k - z^* \rangle\right],\\
        \widehat{\rho} &=& \frac{1}{2}\Exp_{\xi^k}[\gamma_{1,\xi^k}\mu_{\xi^k}(\obf_{\{\mu_{\xi^k} \ge 0\}} + 4\cdot\obf_{\{\mu_{\xi^k} < 0\}})],\\
        \widehat{G}_k &=& \Exp_{\xi^k}\left[\gamma_{1,\xi^k}^2\left(1 - 4|\mu_{\xi^k}|\gamma_{1,\xi^k} - 4 L_{\xi^k}^2\gamma_{1,\xi^k}^2\right)\|\mathbf{g^k}\|^2\right].
    \end{eqnarray*}
\end{lemmasec}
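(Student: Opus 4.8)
The plan is to obtain this lower bound on $\widehat{P}_k$ by \emph{reusing, read in reverse}, the one-step expansion already carried out in the proof of Lemma~\ref{lem:second_moment_bound_S_SEG}. Throughout write $\gamma := \gamma_{1,\xi^k}$, $F := F_{\xi^k}$, $\mu := \mu_{\xi^k}$, $L := L_{\xi^k}$, and let $\widetilde z := z^k - \gamma\,\mathbf{g^{k+\frac{1}{2}}}$ be the virtual $\gamma_1$-step used inside that proof. First I would record the elementary identity
\[
2\widehat{P}_k \;=\; \|z^k - z^*\|^2 + \Exp_{\xi^k}\!\left[\gamma^2\|\mathbf{g^{k+\frac{1}{2}}}\|^2\right] - \Exp_{\xi^k}\!\left[\|\widetilde z - z^*\|^2\right],
\]
obtained by expanding $\|\widetilde z - z^*\|^2 = \|z^k-z^*\|^2 - 2\gamma\langle\mathbf{g^{k+\frac{1}{2}}},z^k-z^*\rangle + \gamma^2\|\mathbf{g^{k+\frac{1}{2}}}\|^2$ and taking $\Exp_{\xi^k}$ ($z^k$ being deterministic given the past). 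Hence it suffices to upper-bound $\Exp_{\xi^k}[\|\widetilde z - z^*\|^2]$.

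For that I would replay the inequality chain from the proof of Lemma~\ref{lem:second_moment_bound_S_SEG} — expand, cancel $\Exp_{\xi^k}[\gamma\langle z^k-z^*,F(z^*)\rangle]$ with \eqref{eq:S_SEG_AS_stepsize_1}, use $(\mu,z^*)$-strong monotonicity (Assumption~\ref{as:str_monotonicity}) with the split on the sign of $\mu$, then \eqref{eq:a+b_lower}, \eqref{eq:a+b}, \eqref{eq:inner_product_representation} — but \emph{stopping one step earlier than that proof does}: instead of discarding the curvature term via \eqref{eq:S_SEG_AS_stepsize_and_mu}, I keep it, so that it contributes $-\Exp_{\xi^k}[\gamma(\mu\obf_{\{\mu\ge0\}} + 4\mu\obf_{\{\mu<0\}})]\|z^k-z^*\|^2 = -2\widehat{\rho}\,\|z^k-z^*\|^2$, and I lower-bound the coefficient of $\gamma^2\|\mathbf{g^k}\|^2$ by $1-4\gamma|\mu|$ exactly as there. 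This produces
\begin{multline*}
\Exp_{\xi^k}\!\left[\|\widetilde z - z^*\|^2\right] \;\le\; (1 - 2\widehat{\rho})\,\|z^k-z^*\|^2 - \Exp_{\xi^k}\big[\gamma^2(1-4\gamma|\mu|)\|\mathbf{g^k}\|^2\big] \\
{} - \Exp_{\xi^k}\big[\gamma^2\|F(z^{k+\frac{1}{2}}) - F(z^*)\|^2\big] + \Exp_{\xi^k}\big[\gamma^2\|\mathbf{g^k} - F(z^{k+\frac{1}{2}}) + F(z^*)\|^2\big] \\
{} - 2\Exp_{\xi^k}\big[\gamma\langle z^k - z^*,\, b(z^{k+\frac{1}{2}})\rangle\big] + \Exp_{\xi^k}\big[\gamma^2\|\mathbf{g^{k+\frac{1}{2}}}\|^2\big].
\end{multline*}

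Substituting this into the identity, the two $\Exp_{\xi^k}[\gamma^2\|\mathbf{g^{k+\frac{1}{2}}}\|^2]$ terms cancel; I then drop the nonnegative $\Exp_{\xi^k}[\gamma^2\|F(z^{k+\frac{1}{2}})-F(z^*)\|^2]$, write $\mathbf{g^k}=F(z^k)+b(z^k)$, and use $L$-Lipschitzness of $F$ (so $\|F(z^k)-F(z^{k+\frac{1}{2}})\|\le L\gamma\|\mathbf{g^k}\|$, as $z^k-z^{k+\frac{1}{2}}=\gamma\mathbf{g^k}$), $\|b(z^k)\|\le\zeta$, and two applications of \eqref{eq:a+b} — grouping $b(z^k)$ against the rest at the outermost split so its $\zeta^2$ enters with coefficient $2$ — to get $\|\mathbf{g^k}-F(z^{k+\frac{1}{2}})+F(z^*)\|^2 \le 4L^2\gamma^2\|\mathbf{g^k}\|^2 + 4\|F(z^*)\|^2 + 2\zeta^2$. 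Collecting $\|\mathbf{g^k}\|^2$-terms rebuilds $\widehat{G}_k=\Exp_{\xi^k}[\gamma^2(1-4|\mu|\gamma-4L^2\gamma^2)\|\mathbf{g^k}\|^2]$, and dividing by $2$ gives
\[
\widehat{P}_k \ge \widehat{\rho}\,\|z^k-z^*\|^2 + \tfrac{1}{2}\widehat{G}_k - 2\Exp_{\xi^k}\!\left[\gamma^2\|F(z^*)\|^2\right] - \Exp_{\xi^k}\!\left[\gamma^2\zeta^2\right] + \Exp_{\xi^k}\!\left[\gamma\langle b(z^{k+\frac{1}{2}}),z^k-z^*\rangle\right],
\]
which implies the statement (the stated $\tfrac{5}{2}$ is even a little loose). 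The hypothesis \eqref{eq:S_SEG_AS_stepsize_upper_bound} is used only to ensure $1-4|\mu|\gamma-4L^2\gamma^2\ge 0$, i.e.\ $\widehat{G}_k\ge 0$, so that $\tfrac12\widehat{G}_k$ is a genuine slack for Assumption~\ref{as:unified_assumption_general}.

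The obstacle here is bookkeeping rather than a new idea, and three points must be hit precisely: (i) halting the Lemma~\ref{lem:second_moment_bound_S_SEG} computation exactly at the step just before \eqref{eq:S_SEG_AS_stepsize_and_mu} is invoked, so the curvature term survives to become $\widehat{\rho}\|z^k-z^*\|^2$; (ii) noticing the exact cancellation of the $\|\mathbf{g^{k+\frac{1}{2}}}\|^2$ terms, which is the only reason the estimate closes with no leftover second-moment term; and (iii) ordering the $\|a+b\|^2$ splits so that $b(z^k)$ enters with coefficient $2$ and not $4$, keeping the final $\zeta^2$-coefficient equal to $1$. Carrying the signs correctly through the sign-of-$\mu$ case split is the only part that needs real care.
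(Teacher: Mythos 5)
Your argument is correct and in fact yields a marginally sharper bound (constant $2$ instead of $\tfrac{5}{2}$ in front of $\Exp_{\xi^k}\left[\gamma_{1,\xi^k}^2\|F_{\xi^k}(z^*)\|^2\right]$, which of course implies the stated lemma), but it takes a different route from the paper. The paper proves Lemma~\ref{lem:P_k_bound_S_SEG} by a self-contained computation: it expands $-\widehat{P}_k$ using \eqref{eq:S_SEG_AS_stepsize_1}, shifts the inner product to the point $z^{k+\frac{1}{2}}$, applies the polarization identity \eqref{eq:inner_product_representation}, and then bounds two blocks $T_1$ (strong monotonicity with the sign split on $\mu_{\xi^k}$, plus \eqref{eq:a+b_lower}, \eqref{eq:a+b}) and $T_2$ (\eqref{eq:a+b_lower}, \eqref{eq:a+b} and $L_{\xi^k}$-Lipschitzness) separately; that is where its $\tfrac{5}{2}$ and the $\zeta^2$ coefficient come from. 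You instead combine the exact identity $2\widehat{P}_k=\|z^k-z^*\|^2+\Exp_{\xi^k}\left[\gamma_{1,\xi^k}^2\|\mathbf{g}^{k+\frac{1}{2}}\|^2\right]-\Exp_{\xi^k}\left[\|\widetilde z-z^*\|^2\right]$ with the intermediate inequality inside the proof of Lemma~\ref{lem:second_moment_bound_S_SEG}, frozen one step before \eqref{eq:S_SEG_AS_stepsize_and_mu} is invoked, so that the curvature term survives as $-2\widehat{\rho}\|z^k-z^*\|^2$; the exact cancellation of the two $\|\mathbf{g}^{k+\frac{1}{2}}\|^2$ terms then closes the estimate, and your split $\|\mathbf{g}^k-F_{\xi^k}(z^{k+\frac{1}{2}})+F_{\xi^k}(z^*)\|^2\le 4L_{\xi^k}^2\gamma_{1,\xi^k}^2\|\mathbf{g}^k\|^2+4\|F_{\xi^k}(z^*)\|^2+2\zeta^2$ correctly rebuilds $\widehat{G}_k$ and gives coefficient $1$ on $\zeta^2$ after halving, exactly as in the statement. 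What your route buys is economy (no independent $T_1$/$T_2$ estimates, a slightly better $\sigma$-type constant); what it costs is self-containedness, since it leans on the precise intermediate display of another proof rather than standing alone. Your closing remark that \eqref{eq:S_SEG_AS_stepsize_upper_bound} is needed only to guarantee $\widehat{G}_k\ge 0$ agrees with the role this condition plays in the paper's own proof.
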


\begin{proof}
    We start with rewriting $\widehat{P}_{k}$:
    \begin{eqnarray}
        -\widehat{P}_{k} &=& -\Exp_{\xi^k}\left[\gamma_{1,\xi^k}\langle \mathbf{g^{k+\frac{1}{2}}}, z^k - z^*\rangle\right] \overset{\ref{eq:S_SEG_AS_stepsize_1}}{=} -\Exp_{\xi^k}\left[\gamma_{1,\xi^k}\langle F_{\xi^k}(z^{k+\frac{1}{2}}) - F_{\xi^k}(z^*), z^k - z^*\rangle\right] - \Exp_{\xi^k}\left[\gamma_{1,\xi^k}\langle b(z^{k+\frac{1}{2}}), z^k - z^*\rangle\right] \notag\\
        &=& -\Exp_{\xi^k}\left[\gamma_{1,\xi^k}\langle F_{\xi^k}(z^{k+\frac{1}{2}}) - F_{\xi^k}(z^*), z^k - \gamma_{1,\xi^k}F_{\xi^k}(z^k) -  \gamma_{1,\xi^k}b(z^{k}) - z^*\rangle\right] \notag \\&& - \Exp_{\xi^k}\left[\gamma_{1,\xi^k}^2\langle F_{\xi^k}(z^{k+\frac{1}{2}}) - F_{\xi^k}(z^*),F_{\xi^k}(z^k) + b(z^{k})\rangle\right] - \Exp_{\xi^k}\left[\gamma_{1,\xi^k}\langle b (z^{k+\frac{1}{2}}), z^k - z^*\rangle\right] \notag \\&\overset{\eqref{eq:inner_product_representation}} {=}& \underbrace{-\Exp_{\xi^k}\left[\gamma_{1,\xi^k}\langle F_{\xi^k}(z^k - \gamma_{1,\xi^k}F_{\xi^k}(z^k) - \gamma_{1,\xi^k}b(z^{k})) - F_{\xi^k}(z^*), z^k - \gamma_{1,\xi^k}F_{\xi^k}(z^k) - \gamma_{1,\xi^k}b(z^{k}) - z^*\rangle\right]}_{T_1} \notag \\&& \underbrace{-\frac{1}{2}\Exp_{\xi^k}\left[\gamma_{1,\xi^k}^2\|F_{\xi^k}(z^{k+\frac{1}{2}}) - F_{\xi^k}(z^*)\|^2\right] + \frac{1}{2}\Exp_{\xi^k}\left[\gamma_{1,\xi^k}^2\|F_{\xi^k}(z^{k+\frac{1}{2}}) - \mathbf{g^k} - F_{\xi^k}(z^*)\|^2\right]}_{T_2} \notag\\&& -\frac{1}{2}\Exp_{\xi^k}\left[\gamma_{1,\xi^k}^2\|\mathbf{g^k}\|^2\right] - \Exp_{\xi^k}\left[\gamma_{1,\xi^k}\langle b (z^{k+\frac{1}{2}}), z^k - z^*\rangle\right]  . \label{eq:P_k_equation_technical_1}
    \end{eqnarray}

    Next, we upper-bound terms $T_1$ and $T_2$. From $(\mu_{\xi^k}, z^*)$-strong monotonicity of $F_{\xi^k}$ we have: \footnote{When all $\mu_{\xi} \ge 0$, which is often assumed in the analysis of \algname{S-SEG}, numerical constants in our proof can be tightened. Indeed, in the last step of the derivation below, we can get $\Exp_{\xi^k}\left[\mu_{\xi^k}\gamma_{1,\xi^k}^3\|F_{\xi^k}(z^k)\|^2\right]$ instead of $2\Exp_{\xi^k}\left[|\mu_{\xi^k}|\gamma_{1,\xi^k}^3\|F_{\xi^k}(z^k)\|^2\right]$.}
    \begin{eqnarray*}
        T_1 &\overset{\ref{as:str_monotonicity}}{\le}& -\Exp_{\xi^k}\left[\mu_{\xi^k}\gamma_{1,\xi^k}\left\|z^k - z^* - \gamma_{1,\xi^k}F_{\xi^k}(z^k) - \gamma_{1,\xi^k}b(z^{k})\right\|^2\right]\\
        &=& -\Exp_{\xi^k}\left[\obf_{\{\mu_{\xi^k} \ge 0\}}\mu_{\xi^k}\gamma_{1,\xi^k}\left\|z^k - z^* - \gamma_{1,\xi^k}F_{\xi^k}(z^k) - \gamma_{1,\xi^k}b(z^{k})\right\|^2\right]\\
        && -\Exp_{\xi^k}\left[\obf_{\{\mu_{\xi^k} < 0\}}\mu_{\xi^k}\gamma_{1,\xi^k}\left\|z^k - z^* - \gamma_{1,\xi^k}F_{\xi^k}(z^k) - \gamma_{1,\xi^k}b(z^{k})\right\|^2\right]\\ &\overset{\eqref{eq:a+b_lower},\eqref{eq:a+b}}{\le}& 
        -\frac{1}{2}\Exp_{\xi^k}\left[\obf_{\{\mu_{\xi^k} \ge 0\}}\mu_{\xi^k}\gamma_{1,\xi^k}\right]\|z^k - z^*\|^2 + \Exp_{\xi^k}\left[\obf_{\{\mu_{\xi^k} \ge 0\}}\mu_{\xi^k}\gamma_{1,\xi^k}^3\|\mathbf{g^k}\|^2\right]\\
        && -2\Exp_{\xi^k}\left[\obf_{\{\mu_{\xi^k} < 0\}}\mu_{\xi^k}\gamma_{1,\xi^k}\right]\|z^k - z^*\|^2 - 2\Exp_{\xi^k}\left[\obf_{\{\mu_{\xi^k} < 0\}}\mu_{\xi^k}\gamma_{1,\xi^k}^3\|\mathbf{g^k}\|^2\right]\\
        &\le& -\frac{1}{2}\Exp_{\xi^k}\left[\left(\obf_{\{\mu_{\xi^k} \ge 0\}} + 4\cdot\obf_{\{\mu_{\xi^k} < 0\}}\right)\mu_{\xi^k}\gamma_{1,\xi^k}\right]\|z^k - z^*\|^2 + 2\Exp_{\xi^k}\left[|\mu_{\xi^k}|\gamma_{1,\xi^k}^3\|\mathbf{g^k}\|^2\right].
    \end{eqnarray*}

    Using simple inequalities \eqref{eq:a+b_lower} and \eqref{eq:a+b} and applying $L_{\xi^k}$-Lipschitzness of $F_{\xi^k}(x)$, we upper-bound $T_2$:
    \begin{eqnarray*}
        T_2 &\overset{\eqref{eq:a+b_lower},\eqref{eq:a+b}}{\leq}& -\frac{1}{4}\Exp_{\xi^k}\left[\gamma_{1,\xi^k}^2\|F_{\xi^k}(z^{k+\frac{1}{2}})\|^2\right] + \frac{1}{2}\Exp_{\xi^k}\left[\gamma_{1,\xi^k}^2\|F_{\xi^k}(z^*)\|^2\right]\\
        && + \Exp_{\xi^k}\left[\gamma_{1,\xi^k}^2\|F_{\xi^k}(z^k) - F_{\xi^k}(z^{k+\frac{1}{2}}) + F_{\xi^k}(z^*)\|^2\right] + \Exp_{\xi^k}\left[\gamma_{1,\xi^k}^2\|b(z^{k})\|^2\right]\\
        &\overset{\eqref{eq:a+b}}{\leq}& \Exp_{\xi^k}\left[\gamma_{1,\xi^k}^2\zeta^2\right] + 2 \Exp_{\xi^k}\left[\gamma_{1,\xi^k}^2\|F_{\xi^k}(z^k) - F_{\xi^k}(z^{k+\frac{1}{2}})\|^2\right] + 
        \frac{5}{2}\Exp_{\xi^k}\left[\gamma_{1,\xi^k}^2\|F_{\xi^k}(z^*)\|^2\right] \\
        &\overset{\ref{as:lipschitzness}}{\le}& \Exp_{\xi^k}\left[\gamma_{1,\xi^k}^2\zeta^2\right] + \frac{5}{2}\Exp_{\xi^k}\left[\gamma_{1,\xi^k}^2\|F_{\xi^k}(z^*)\|^2\right] +
        2 \Exp_{\xi^k}\left[\gamma_{1,\xi^k}^4 L_{\xi^k}^2\|\mathbf{g^k}\|^2\right].
    \end{eqnarray*}

    Putting all together in \eqref{eq:P_k_equation_technical_1}, we derive:
    \begin{eqnarray*}
        -\widehat{P}_{k} &\le& -\frac{1}{2}\Exp_{\xi^k}\left[\left(\obf_{\{\mu_{\xi^k} \ge 0\}} + 4\cdot\obf_{\{\mu_{\xi^k} < 0\}}\right)\mu_{\xi^k}\gamma_{1,\xi^k}\right]\|z^k - z^*\|^2 \\&&+ 2\Exp_{\xi^k}\left[|\mu_{\xi^k}|\gamma_{1,\xi^k}^3\|\mathbf{g^k}\|^2\right] + \Exp_{\xi^k}\left[\gamma_{1,\xi^k}^2\zeta^2\right] + \frac{5}{2}\Exp_{\xi^k}\left[\gamma_{1,\xi^k}^2\|F_{\xi^k}(z^*)\|^2\right]\\&&+ 2 \Exp_{\xi^k}\left[\gamma_{1,\xi^k}^4 L_{\xi^k}^2\|\mathbf{g^k}\|^2\right]  -\frac{1}{2}\Exp_{\xi^k}\left[\gamma_{1,\xi^k}^2\|\mathbf{g^k}\|^2\right] - \Exp_{\xi^k}\left[\gamma_{1,\xi^k}\langle b (z^{k+\frac{1}{2}}), z^k - z^*\rangle\right]\\ &=&
        - \frac{1}{2}\Exp_{\xi^k}\left[\left(\obf_{\{\mu_{\xi^k} \ge 0\}} + 4\cdot\obf_{\{\mu_{\xi^k} < 0\}}\right)\mu_{\xi^k}\gamma_{1,\xi^k} \right]\|z^k - z^*\|^2\\&&+ \Exp_{\xi^k}\left[\gamma_{1,\xi^k}^2\zeta^2\right] + \frac{5}{2}\Exp_{\xi^k}\left[\gamma_{1,\xi^k}^2\|F_{\xi^k}(z^*)\|^2\right] - \Exp_{\xi^k}\left[\gamma_{1,\xi^k}\langle b (z^{k+\frac{1}{2}}), z^k - z^*\rangle\right] \\&& - \frac{1}{2}\Exp_{\xi^k}\left[\gamma_{1,\xi^k}^2 \left(1 - 4\gamma_{1,\xi^k}|\mu_{\xi^k}| -4\gamma_{1,\xi^k}^2L_{\xi^k}^2\right)\|\mathbf{g^k}\|^2\right],
    \end{eqnarray*}
    where the last term is non-negative due to \eqref{eq:S_SEG_AS_stepsize_upper_bound}. This finishes the proof.
\end{proof}

Combining two previous lemmas with Lemma~\ref{lem:main_lemma_on_ass3}, we derive convergence guarantees for S-SEG with biased oracle.

\begin{theoremsec}[Proof of Theorem~\ref{thm:S-SEG_convergence_rate}]\label{proof:S-SEG_convergence_rate_proof}Let Assumptions~\ref{as:lipschitzness}~and~\ref{as:str_monotonicity} hold. If stepsizes from \ref{eq:S_SEG} update rules satisfy $\gamma_{2,\xi^k} = \alpha \gamma_{1,\xi^k}$, where $\alpha > 0$, and $\gamma_{1,\xi^k}$ satisfies Assumption~\ref{as:stepsize_and_mu_conditions} and \eqref{eq:S_SEG_AS_stepsize_upper_bound}, then $\mathbf{g^{k+\frac{1}{2}}}$ from~\ref{eq:S_SEG} satisfies Assumption~\ref{as:unified_assumption_general} with the following parameters:
	\begin{gather*}
		A = 4\alpha,\quad C = 0,\quad D_1 = 20\alpha^2\Exp_\xi\left[\gamma_{1,\xi}^2\|F_\xi(z^*)\|^2\right] = 20\alpha^2\sigma_{\algname{AS}}^2, \\ E_1 = 10\alpha^2\Exp_\xi\left[\gamma_{1,\xi}^2 \right] = 10\alpha^2 \widehat{\gamma_2} \quad H_1 = 8\alpha^2\Exp_\xi\left[\gamma_{1,\xi} \right] = 8\alpha^2\widehat{\gamma_1},\\
		\rho = \frac{\alpha}{2}\Exp_{\xi^k}[\gamma_{1,\xi^k}\mu_{\xi^k}(\obf_{\{\mu_{\xi^k} \ge 0\}} + 4\cdot\obf_{\{\mu_{\xi^k} < 0\}})],\\
		G_k = \alpha \Exp_{\xi^k}\left[\gamma_{1,\xi^k}^2\left(1 - 4|\mu_{\xi^k}|\gamma_{1,\xi^k} - 4L_{\xi^k}^2\gamma_{1,\xi^k}^2\right)\|\mathbf{g^k}\|^2\right],\quad J = \frac{1}{2},\quad D_2 = \frac{5}{2}\alpha\sigma_{\algname{AS}}^2,\\
        E_2 = \frac{\alpha}{2}\Exp_\xi\left[\gamma_{1,\xi}^2 \right] = \frac{\alpha}{2} \widehat{\gamma_2} \quad H_2 = \alpha \widehat{\gamma_1}
	\end{gather*}
If additionally $\alpha \leq \frac{1}{8}$ and  $ \rho > 0$ ($\rho$ is defined in Assumption \ref{as:unified_assumption_general}), then for all $N > 0$\\(number of steps for \ref{eq:S_SEG} algorithm with the output $z^{N}$): 
	\begin{equation*}
		\Exp\left[\|z^{N+1} - z^*\|^2\right] \leq \left(1 - \rho\right)\Exp\left[\|z^N - z^*\|^2\right] + \frac{5\alpha(8\alpha + 1)}{2}\sigma_{\algname{AS}}^2 + \frac{\alpha(20\alpha + 1)}{2} \widehat{\gamma_2} \zeta^2 - \alpha(8\alpha + 1) \widehat{\gamma_1}\zeta R,
	\end{equation*}
where $z^*$ is the solution of~\eqref{eq:variational-ineq-form}, $\sigma_{\algname{AS}}^2 = \Exp_\xi\left[\gamma_{1, \xi}^2\|F_\xi(z^*)\|^2\right]$, $\widehat{\gamma_1} = \Exp_\xi\left[\gamma_{1,\xi^k}\right] $ and $\widehat{\gamma_2} = \Exp_\xi\left[\gamma_{2,\xi^k}\right]$,\\$\zeta > 0$ is the upper-bound for bias norm (see Definition \ref{def:b_oracle}) and $R =\max_{i\in \{0..N\}}\|z^i - z^*\|.$
\end{theoremsec}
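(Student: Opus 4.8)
The plan is to obtain both halves of the statement from the three auxiliary lemmas proved above: the parameter identification (``$\mathbf{g^{k+\frac12}}$ satisfies Assumption~\ref{as:unified_assumption_general} with the listed constants'') from Lemmas~\ref{lem:second_moment_bound_S_SEG} and~\ref{lem:P_k_bound_S_SEG}, and the one-step contraction estimate from Lemma~\ref{lem:main_lemma_on_ass3}. All of the analytic content already sits in those three lemmas, so what remains is the bookkeeping needed to pass from quantities written in terms of the exploration stepsize $\gamma_{1,\xi^k}$ (used in Lemmas~\ref{lem:second_moment_bound_S_SEG}--\ref{lem:P_k_bound_S_SEG}) to quantities written in terms of the update stepsize $\gamma_{2,\xi^k} = \alpha\gamma_{1,\xi^k}$ (used in Assumption~\ref{as:unified_assumption_general}), and then to collect constants.

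First I would record the scaling relations $\gamma_{2,\xi^k}^2 = \alpha^2\gamma_{1,\xi^k}^2$ and $P_k = \Exp_{\xi^k}[\gamma_{2,\xi^k}\langle\mathbf{g^{k+\frac12}},z^k-z^*\rangle] = \alpha\,\widehat{P}_k$, where $\widehat{P}_k = \Exp_{\xi^k}[\gamma_{1,\xi^k}\langle\mathbf{g^{k+\frac12}},z^k-z^*\rangle]$ is the object appearing in the two technical lemmas. Multiplying the bound of Lemma~\ref{lem:second_moment_bound_S_SEG} through by $\alpha^2$ turns its left-hand side into $\Exp_{\xi^k}[\gamma_{2,\xi^k}^2\|\mathbf{g^{k+\frac12}}\|^2]$ and its leading term into $8\alpha^2\widehat{P}_k = 8\alpha P_k = 2(4\alpha)P_k$; this forces $A = 4\alpha$ and $C = 0$, the $\|F_\xi(z^*)\|^2$ term becomes $D_1 = 20\alpha^2\sigma_{\algname{AS}}^2$, the $\zeta^2$ term gives $E_1 = 10\alpha^2\widehat{\gamma_2}$, and the residual bias cross term $-8\alpha^2\Exp_{\xi^k}[\gamma_{1,\xi^k}\langle z^k-z^*,b(z^{k+\frac12})\rangle]$ is estimated by Cauchy--Schwarz together with $\|b(\cdot)\|\le\zeta$ and $\|z^k-z^*\|\le R$, producing the coefficient $H_1 = 8\alpha^2\widehat{\gamma_1}$; this is exactly \eqref{eq:second_moment_bound_general}. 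The same manipulation of Lemma~\ref{lem:P_k_bound_S_SEG}, now multiplied through by $\alpha$ (since $P_k = \alpha\widehat{P}_k$), gives \eqref{eq:P_k_general} with $\rho = \tfrac{\alpha}{2}\Exp_{\xi^k}[\gamma_{1,\xi^k}\mu_{\xi^k}(\obf_{\{\mu_{\xi^k}\ge0\}}+4\obf_{\{\mu_{\xi^k}<0\}})]$, $J = \tfrac12$, $G_k = \alpha\widehat{G}_k$, $D_2 = \tfrac{5\alpha}{2}\sigma_{\algname{AS}}^2$, $E_2 = \tfrac{\alpha}{2}\widehat{\gamma_2}$ and $H_2 = \alpha\widehat{\gamma_1}$, again after bounding the bias cross term by Cauchy--Schwarz. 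To finish the first half I would check the admissibility conditions on these constants: $G_k\ge 0$ is precisely where the stepsize bound \eqref{eq:S_SEG_AS_stepsize_upper_bound} enters, since with $s := 4|\mu_{\xi^k}|+2L_{\xi^k}$ and $\gamma_{1,\xi^k}\le 1/s$ one gets $4|\mu_{\xi^k}|\gamma_{1,\xi^k}+4L_{\xi^k}^2\gamma_{1,\xi^k}^2\le (4|\mu_{\xi^k}|s+4L_{\xi^k}^2)/s^2\le 1$, so the bracket inside $\widehat{G}_k$ is non-negative; $\rho\ge 0$ follows from \eqref{eq:S_SEG_AS_stepsize_and_mu}, and $\rho\le 1$ from $\gamma_{1,\xi^k}|\mu_{\xi^k}|\le\tfrac14$.

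For the second half, $\alpha\le\tfrac18$ gives $A = 4\alpha\le\tfrac12$, and together with $C = 0 < \rho$ the hypotheses of Lemma~\ref{lem:main_lemma_on_ass3} are met; it yields
\[
\Exp\!\left[\|z^{k+1}-z^*\|^2\right] \le (1+C-\rho)\,\Exp\!\left[\|z^k-z^*\|^2\right] + D_1+D_2+(E_1+E_2)\zeta^2-(H_1+H_2)\zeta R .
\]
Substituting $C = 0$, simplifying $D_1+D_2 = \tfrac{5\alpha(8\alpha+1)}{2}\sigma_{\algname{AS}}^2$, $E_1+E_2 = \tfrac{\alpha(20\alpha+1)}{2}\widehat{\gamma_2}$, $H_1+H_2 = \alpha(8\alpha+1)\widehat{\gamma_1}$, and taking $k = N$ gives the claimed recursion.

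I do not expect a conceptual obstacle. The only step that needs care is the accounting of powers of $\alpha$ when moving between $\gamma_{1,\xi^k}$ and $\gamma_{2,\xi^k}$ — the second-moment bound carries $\alpha^2$ while $P_k$ carries only $\alpha$, which is exactly why $A = 4\alpha$ and not $4\alpha^2$ — together with the requirement that the two bias cross-terms coming out of Lemmas~\ref{lem:second_moment_bound_S_SEG} and~\ref{lem:P_k_bound_S_SEG} be estimated in a matching way so that they combine into the single $-(H_1+H_2)\zeta R$ term of Lemma~\ref{lem:main_lemma_on_ass3}. Beyond Cauchy--Schwarz, the uniform bias bound $\|b(z)\|\le\zeta$, the estimate $\|z^k-z^*\|\le R$, and the stepsize condition \eqref{eq:S_SEG_AS_stepsize_upper_bound}, no further ingredient is needed.
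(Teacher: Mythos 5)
Your route is the paper's own: rescale Lemma~\ref{lem:second_moment_bound_S_SEG} by $\alpha^2$ and Lemma~\ref{lem:P_k_bound_S_SEG} by $\alpha$ (using $P_k=\alpha\widehat{P}_k$), read off the constants of Assumption~\ref{as:unified_assumption_general}, then apply Lemma~\ref{lem:main_lemma_on_ass3} with $A=4\alpha\le\tfrac12$ and $C=0<\rho$, and collect $D_1+D_2$, $E_1+E_2$, $H_1+H_2$. All of that bookkeeping matches the paper, and your check that the stepsize bound \eqref{eq:S_SEG_AS_stepsize_upper_bound} makes the bracket in $\widehat{G}_k$ non-negative is correct (the paper leaves this inside the lemmas).

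The one place where you add a justification the paper omits is also the one step that does not hold as written: the bias cross-terms. Cauchy--Schwarz with $\|b(\cdot)\|\le\zeta$ and $\|z^k-z^*\|\le R$ gives $|\langle z^k-z^*,\,b(z^{k+\frac{1}{2}})\rangle|\le\zeta R$, hence $-8\alpha^2\Exp_{\xi^k}\left[\gamma_{1,\xi^k}\langle z^k-z^*,\,b(z^{k+\frac{1}{2}})\rangle\right]\le +8\alpha^2\widehat{\gamma_1}\zeta R$ and $\alpha\Exp_{\xi^k}\left[\gamma_{1,\xi^k}\langle b(z^{k+\frac{1}{2}}),\,z^k-z^*\rangle\right]\ge -\alpha\widehat{\gamma_1}\zeta R$; that is, the cross-terms can only be absorbed with a \emph{plus} sign in front of $\zeta R$. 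They cannot yield the $-H_1\zeta R$ in \eqref{eq:second_moment_bound_general} and the $+H_2\zeta R$ in \eqref{eq:P_k_general}, since that would require $\langle z^k-z^*,b\rangle\ge\zeta R$, which fails already for $b\equiv 0$. The paper's proof never bounds these terms at all: it carries the inner products out of Lemmas~\ref{lem:second_moment_bound_S_SEG}--\ref{lem:P_k_bound_S_SEG} and simply identifies them with $-H_1\zeta R$ and $+H_2\zeta R$ when declaring the parameters, so the sign problem is inherited from the statement rather than introduced by you; but if one argues via Cauchy--Schwarz, as you propose, the recursion that is actually provable ends in $+\alpha(8\alpha+1)\widehat{\gamma_1}\zeta R$ instead of the stated $-\alpha(8\alpha+1)\widehat{\gamma_1}\zeta R$, and that term could then no longer be discarded as a helpful one (as is done downstream in Corollary~\ref{cor:S-SEG-corollary-generic}); it would have to be handled, e.g.\ by Young's inequality, contributing an extra $\zeta$-dependent term. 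Apart from this sign on the $\zeta R$ terms, your argument coincides with the paper's.
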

\begin{proof}
    \algname{S-SEG} fits the unified update rule \eqref{eq:general_seg_method} with $\mathbf{g^{k+\frac{1}{2}}} = F_{\xi^k}(z^{k+\frac{1}{2}}) + b(z^{k+\frac{1}{2}})$.\\ Moreover, Lemmas~\ref{lem:second_moment_bound_S_SEG}~and~\ref{lem:P_k_bound_S_SEG} imply that:
    \begin{eqnarray}
		\Exp_{\xi^k}\left[\gamma_{1,\xi^k}^2\| \mathbf{g^{k+\frac{1}{2}}}\|^2\right] &\leq& 8 \widehat{P}_{k} + 20 \Exp_{\xi^k}\left[\gamma_{1,\xi^k}^2\|F_{\xi^k}(z^*)\|^2\right] + 10\zeta^2\widehat{\gamma} - 8\Exp_{\xi^k}\left[\gamma_{1,\xi^k} \langle z^k - z^*, b(z^{k+\frac{1}{2}})\rangle \right]\quad\quad\label{eq:S-SEG_convergence_rate_proof_tech_1}
    \end{eqnarray}
    \vspace{-0.75cm}
    \begin{eqnarray}
        \widehat{P}_{k} &\ge& \widehat{\rho}\|z^k - z^*\|^2 + \frac{1}{2}\widehat{G}_k - \frac{5}{2}\Exp_{\xi^k}\left[\gamma_{1,\xi^k}^2\|F_\xi^k(z^*)\|^2\right] - \frac{1}{2}\zeta^2\widehat{\gamma} + \Exp_{\xi^k}\left[\gamma_{1,\xi^k} \langle z^k - z^*, b(z^{k+\frac{1}{2}})\rangle \right],\label{eq:S-SEG_convergence_rate_proof_tech_2}
    \end{eqnarray}
    where:
    \begin{eqnarray*}
        \widehat{P}_{k} &=& \Exp_{\xi^k}\left[\gamma_{1,\xi^k}\langle \mathbf{g^{k+\frac{1}{2}}}, z^k - z^* \rangle\right],\\
        \widehat{\rho} &=& \frac{1}{2}\Exp_{\xi^k}\left[\gamma_{1,\xi^k}\mu_{\xi^k}(\obf_{\{\mu_{\xi^k} \ge 0\}} + 4\cdot\obf_{\{\mu_{\xi^k} < 0\}}) \right],\\
        \widehat{G}_k &=& \Exp_{\xi^k}\left[\gamma_{1,\xi^k}^2\left(1 - 4|\mu_{\xi^k}|\gamma_{1,\xi^k} - 4 L_{\xi^k}^2\gamma_{1,\xi^k}^2\right)\|\mathbf{g^k}\|^2\right].
    \end{eqnarray*}
    Since $\gamma_{2,\xi^k} = \alpha \gamma_{1,\xi^k}$, we multiply \eqref{eq:S-SEG_convergence_rate_proof_tech_1} by $\alpha^2$ and \eqref{eq:S-SEG_convergence_rate_proof_tech_2} by $\alpha$ and acquire that Assumption~\ref{as:unified_assumption_general} holds with the parameters given in the statement of the theorem. Applying Lemma~\ref{lem:main_lemma_on_ass3} we get the result.
\end{proof}

The next corollary establishes the convergence rate with diminishing stepsizes, allowing to reduce the size of the neighborhood when $\rho > 0$.

\begin{corollarysec}\label{cor:S-SEG-corollary-generic}
    Let Assumptions~\ref{as:lipschitzness},~\ref{as:str_monotonicity}~and~\ref{as:stepsize_and_mu_conditions} hold, $\gamma_{2,\xi^k} = \alpha \gamma_{1,\xi^k}$, where $\alpha = \frac{1}{8}$, $\gamma_{1,\xi^k} = \beta_k \cdot\gamma_{\xi^k}$, and $\gamma_{\xi^k}$ satisfies \eqref{eq:S_SEG_AS_stepsize_upper_bound}. Assume that
	\begin{equation*}
	    \widetilde{\rho} = \frac{1}{16}\Exp_{\xi^k}[ \gamma_{\xi^k}\mu_{\xi^k}(\obf_{\{\mu_{\xi^k} \ge 0\}} + 4\cdot\obf_{\{\mu_{\xi^k} < 0\}})] > 0.
	\end{equation*}
	Then, for all $N \ge 0$ and $\{\beta_k\}_{k\ge 0}$ such that:
	\begin{eqnarray*}
		\text{if } N \le \frac{1}{\widetilde{\rho}}, && \beta_k = 1,\\
		\text{if } N > \frac{1}{\widetilde{\rho}} \text{ and } k < k_0, && \beta_k = 1,\\
		\text{if } N > \frac{1}{\widetilde{\rho}} \text{ and } k \ge k_0, && \beta_k = \frac{2}{2 + \widetilde{\rho}(k - k_0)},
	\end{eqnarray*}
	for $k_0 = \left\lceil\frac{N}{2} \right\rceil$ we have:
	\begin{equation*}
	    \Exp\left[\|z^k - z^*\|^2\right] \le \frac{32 \|x^0 - z^*\|^2}{\widetilde{\rho}} \exp\left( -\frac{\widetilde{\rho}N}{2} \right) + \frac{45}{2N \widetilde{\rho}^2}\sigma_{\algname{AS}}^2 + \frac{63  \widehat{\gamma_2}}{8\widetilde{\rho}^2}\zeta^2 ,
	\end{equation*}
	where $\sigma_{\algname{AS}}^2 = \Exp_\xi\left[\gamma_{\xi}^2\|F_\xi(z^*)\|^2\right]$
\end{corollarysec}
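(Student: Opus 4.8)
The plan is to combine the one-step descent inequality of Theorem~\ref{thm:S-SEG_convergence_rate} with the diminishing-stepsize recursion lemma of Stich (Lemma~\ref{lem:stich_lemma}). Writing $r_k := \Exp\left[\|z^k - z^*\|^2\right]$ and specialising Theorem~\ref{thm:S-SEG_convergence_rate} to the choice $\gamma_{1,\xi^k} = \beta_k\gamma_{\xi^k}$, $\gamma_{2,\xi^k}=\alpha\gamma_{1,\xi^k}$, $\alpha=\tfrac{1}{8}$, I would first record the resulting recursion
\begin{equation*}
r_{k+1} \le (1-\rho_k)\,r_k + \tfrac{5}{8}\,\sigma_{\algname{AS}}^2\,\beta_k^2 + \tfrac{7}{32}\,\widehat{\gamma_2}\,\zeta^2\,\beta_k - \tfrac{1}{4}\,\widehat{\gamma_1}\,\zeta R,
\end{equation*}
where $\rho_k=\beta_k\widetilde\rho$ (indeed $\tfrac{\alpha}{2}\Exp_{\xi^k}[\gamma_{1,\xi^k}\mu_{\xi^k}(\obf_{\{\mu_{\xi^k}\ge0\}}+4\cdot\obf_{\{\mu_{\xi^k}<0\}})]=\beta_k\widetilde\rho$ for $\alpha=\tfrac{1}{8}$), the symbol $\sigma_{\algname{AS}}^2$ now denotes the $\beta$-free quantity $\Exp_\xi[\gamma_\xi^2\|F_\xi(z^*)\|^2]$ (the factor $\beta_k^2$ being extracted from $\gamma_{1,\xi}^2$), $\widehat{\gamma_2}=\alpha\,\Exp_\xi[\gamma_\xi]$, $\widehat{\gamma_1}=\Exp_\xi[\beta_k\gamma_\xi]$, and the numerical coefficients $\tfrac{5}{8}=\tfrac{5\alpha(8\alpha+1)}{2}$, $\tfrac{7}{32}=\tfrac{\alpha(20\alpha+1)}{2}$, $\tfrac{1}{4}=\alpha(8\alpha+1)$ are the specialisations at $\alpha=\tfrac{1}{8}$ of the constants in Theorem~\ref{thm:S-SEG_convergence_rate}. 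As $\zeta,R\ge0$ and all stepsizes are positive, I would drop the last (nonpositive) term.

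Before invoking Lemma~\ref{lem:stich_lemma} I would check its hypotheses with $a=\widetilde\rho$, $h=1$, $K=N$: the constraint $\gamma_{\xi^k}\le(4|\mu_{\xi^k}|+2L_{\xi^k})^{-1}$ forces $\gamma_{\xi^k}\mu_{\xi^k}\obf_{\{\mu_{\xi^k}\ge0\}}\le\tfrac{1}{4}$, so that $0<\widetilde\rho\le\tfrac{1}{64}\le1=h$, while $\beta_k\le1=\tfrac{1}{h}$; moreover the step-size schedule prescribed in the statement of the corollary is verbatim the schedule that Lemma~\ref{lem:stich_lemma} requires for $\gamma_k$, with $\kappa=\tfrac{2h}{a}=\tfrac{2}{\widetilde\rho}$ and $k_0=\lceil N/2\rceil$.

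The one delicate point is the term $\tfrac{7}{32}\widehat{\gamma_2}\zeta^2\beta_k$: it is only \emph{linear} in $\beta_k$ — it is the contribution of the irreducible oracle bias, which does not vanish as $\beta_k\to0$ — so it fits neither the $c\gamma_k^2$ slot of Lemma~\ref{lem:stich_lemma} nor (being of the wrong sign) the $-b\gamma_k$ slot. I would remove it by the constant shift $\widetilde r_k := r_k - \tfrac{7}{32\widetilde\rho}\,\widehat{\gamma_2}\zeta^2$; a one-line computation using $\rho_k=\beta_k\widetilde\rho$ turns the recursion into
\begin{equation*}
\widetilde r_{k+1} \le (1-\beta_k\widetilde\rho)\,\widetilde r_k + \tfrac{5}{8}\,\sigma_{\algname{AS}}^2\,\beta_k^2 ,
\end{equation*}
which is exactly the hypothesis of Lemma~\ref{lem:stich_lemma} with $b=0$ and $c=\tfrac{5}{8}\sigma_{\algname{AS}}^2$ (a possible momentary negativity of $\widetilde r_k$ is harmless, since $r_k$ is a genuine expected squared distance and hence $\{\widetilde r_k\}$ is bounded below, so the lemma still applies after the obvious truncation). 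Lemma~\ref{lem:stich_lemma} then gives $\widetilde r_N \le \tfrac{32\widetilde r_0}{\widetilde\rho}\exp\!\left(-\tfrac{\widetilde\rho N}{2}\right)+\tfrac{36\cdot(5/8)\,\sigma_{\algname{AS}}^2}{\widetilde\rho^2 N}$, and undoing the shift — using $\widetilde r_0\le r_0=\|z^0-z^*\|^2$, $36\cdot\tfrac{5}{8}=\tfrac{45}{2}$, and the crude bound $\tfrac{1}{\widetilde\rho}\le\tfrac{1}{\widetilde\rho^2}$ (valid because $\widetilde\rho\le1$) to absorb constants into the bias term — yields the claimed estimate.

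The main obstacle is therefore conceptual rather than technical: recognising that the persistent, linear-in-$\beta_k$ bias term has to be peeled off by a constant shift before Stich's recursion lemma can be applied; once this is done the rest is the routine (if slightly tedious) propagation of the $\alpha=\tfrac{1}{8}$ constants out of Theorem~\ref{thm:S-SEG_convergence_rate} and a direct appeal to Lemma~\ref{lem:stich_lemma}.
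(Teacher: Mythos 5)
Your high-level route is the paper's own: specialise the one-step recursion of Theorem~\ref{thm:S-SEG_convergence_rate} to $\alpha=\tfrac18$ and $\gamma_{1,\xi^k}=\beta_k\gamma_{\xi^k}$, then invoke Lemma~\ref{lem:stich_lemma} with $a=\widetilde{\rho}$, $h=1$, $\gamma_k=\beta_k$. Where you diverge --- declaring the $\zeta^2$ term to be \emph{linear} in $\beta_k$ and peeling it off by the constant shift $\widetilde r_k=r_k-\tfrac{7}{32\widetilde{\rho}}\widehat{\gamma_2}\zeta^2$ --- rests on reading $\widehat{\gamma_2}$ as $\Exp_\xi[\gamma_{2,\xi}]=\alpha\beta_k\Exp_\xi[\gamma_\xi]$. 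That is indeed what the statement of Theorem~\ref{thm:S-SEG_convergence_rate} literally says, but it is inconsistent with the theorem's own proof: there the $\zeta^2$ coefficient is $E_1+E_2=\tfrac{\alpha(20\alpha+1)}{2}\Exp_\xi[\gamma_{1,\xi}^2]$, i.e.\ $\widehat{\gamma_2}$ stands for the expected \emph{squared} stepsize, which under $\gamma_{1,\xi^k}=\beta_k\gamma_{\xi^k}$ equals $\beta_k^2\Exp_\xi[\gamma_\xi^2]$. Accordingly, the paper's proof of this corollary writes the recursion as $r_{k+1}\le(1-\beta_k\widetilde{\rho})r_k+\tfrac58\sigma_{\algname{AS}}^2\beta_k^2+\tfrac{7}{32}\widehat{\gamma_2}\zeta^2\beta_k^2-\tfrac14\widehat{\gamma_1}\zeta R\,\beta_k$ with the $\beta$-free $\widehat{\gamma_2}=\Exp_\xi[\gamma_\xi^2]$, so the bias term drops straight into the $c\gamma_k^2$ slot ($c=\tfrac58\sigma_{\algname{AS}}^2+\tfrac{7}{32}\zeta^2\widehat{\gamma_2}$, and $36\cdot\tfrac{7}{32}=\tfrac{63}{8}$ gives the stated constant), while the negative $\zeta R$ term fills the $-b\gamma_k$ slot with $b=\tfrac14\widehat{\gamma_1}\zeta R$; no shift is needed.

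The consequence is not merely cosmetic. The bound you obtain carries the floor $\tfrac{7}{32\widetilde{\rho}}\widehat{\gamma_2}\zeta^2$ with \emph{your} $\widehat{\gamma_2}=\alpha\Exp_\xi[\gamma_\xi]$, a different quantity from the $\widehat{\gamma_2}=\Exp_\xi[\gamma_\xi^2]$ with which this corollary is instantiated in Corollaries~\ref{cor:S-SEGUS-o-convergence} and~\ref{cor:S-SEGIS-o-convergence} (there $\widehat{\gamma_2}=\tfrac{1}{36L_{\max}^2}$ produces the $\tfrac{2016\,\zeta^2}{\overline{\mu}^2}$ term, i.e.\ a $\zeta^2/\overline{\mu}^2$ scaling, whereas your floor scales like $\zeta^2\Exp_\xi[\gamma_\xi]/\widetilde{\rho}$, i.e.\ $\zeta^2/\overline{\mu}$ up to stepsize factors). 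So, as written, you have proved a statement about a different $\widehat{\gamma_2}$ and it cannot be substituted for the corollary downstream; this is the gap to fix, and it disappears as soon as $\widehat{\gamma_2}$ is identified from the appendix parameters rather than from the (miswritten) theorem statement. To your credit, the shift-plus-truncation device is algebraically sound and would be the right tool if the bias floor genuinely were linear in $\beta_k$ with positive sign, and your explicit check of the hypotheses of Lemma~\ref{lem:stich_lemma} (that $\widetilde{\rho}\le\tfrac1{64}\le h=1$, via \eqref{eq:S_SEG_AS_stepsize_upper_bound}) is a detail the paper omits; but with the correct $\widehat{\gamma_2}$ the paper's one-line application of the lemma suffices.
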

\begin{proof}
    In Theorem~\ref{thm:S-SEG_convergence_rate}, we establish the following recurrence:
    \begin{eqnarray*}
        \Exp\left[\|z^{k+1} - z^*\|^2\right] &\leq& \left(1  - \beta_k\widetilde{\rho}\right)\Exp\left[\|z^k - z^*\|^2\right] + \frac{5\alpha}{2}\left(8\alpha + 1\right)\beta_k^2\sigma_{\algname{AS}}^2 + \frac{\alpha(20\alpha + 1)}{2}\beta_k^2\zeta^2\widehat{\gamma_2} - \alpha(8\alpha + 1)\beta_k \widehat{\gamma_1}\zeta R \\
        &\overset{\alpha = \frac{1}{8}}{=}& \left(1 - \beta_k\widetilde{\rho}\right)\Exp\left[\|z^k - z^*\|^2\right] + \beta_k^2\frac{5\sigma_{\algname{AS}}^2}{8} + \frac{7}{32}\beta_k^2 \zeta^2 \widehat{\gamma_2} - \frac{1}{4}\beta_k \widehat{\gamma_1}\zeta R,
    \end{eqnarray*}
    where we redefined $\rho$ and $\sigma_{\algname{AS}}^2$ to better handle decreasing stepsizes. Applying Lemma~\ref{lem:stich_lemma} for \\$r_k = \Exp\left[\|z^k - z^*\|^2\right]$, $\gamma_k = \beta_k$, $a = \widetilde{\rho}$ , $b = \frac{1}{4}\widehat{\gamma_1}\zeta R$, $c = \frac{5\sigma_{\algname{AS}}^2}{8} + \frac{7}{32}\zeta^2 \widehat{\gamma_2}$, $h = 1$, we get the result. 
\end{proof}

\begin{corollarysec}[Proof of Corollary~\ref{cor:S-SEGUS-o-convergence}]\label{cor:S-SEGUS-o-convergence-proof}Consider the setup from Example~\ref{ex:uniform_sampling}. Let stepsizes from \ref{eq:S_SEG} update rules satisfy $\gamma_{2,\xi^k} = \alpha \gamma_{1,\xi^k}$, where $\alpha = \frac{1}{8}$, and stepsize $\gamma_{1,\xi^k} = \beta_k\gamma = \frac{\beta_k}{6L_{\max}}$, where $L_{\max} = \max_{i\in \{1..n\}}L_i$ and $0 < \beta_k \leq 1$.\\Then for all $N > 0$ (number of steps for \ref{eq:S_SEG} algorithm with the output $z^{N}$) and $\{\beta_k\}_{k\ge 0}$ such that:
	\begin{eqnarray*}
		\text{if } N \le \frac{96L_{\max}}{\overline{\mu}}, && \beta_k = 1,\\
		\text{if } N > \frac{96L_{\max}}{\overline{\mu}} \text{ and } k < k_0, && \beta_k = 1,\\
		\text{if } N > \frac{96L_{\max}}{\overline{\mu}} \text{ and } k \ge k_0, && \beta_k = \frac{96L_{\max}}{96L_{\max} + \overline{\mu}(k - k_0)},
	\end{eqnarray*}
	for $k_0 = \left\lceil\frac{N}{2} \right\rceil$ we have
	\begin{equation*}
	    \Exp\left[\|z^k - z^*\|^2\right] \le \frac{3072\|x^0-z^*\|^2L_{\max}}{\overline{\mu}} \exp\left(-\frac{\overline{\mu} N}{192 L_{\max}}\right) + \frac{5760 \sigma_{\algname{US}*}^2}{\overline{\mu}^2N} + \frac{2016 \zeta^2}{\overline{\mu}^2},
	\end{equation*} which implies that:
        \begin{equation*}
	    \Exp\left[\|z^k - z^*\|^2\right] = \mathcal{O}\left( \frac{L_{\max}R_0^2}{\overline{\mu}}\exp \left( -\frac{\overline{\mu}N}{L_{\max}} \right) + \frac{\sigma_{\algname{US}*}^2}{\overline{\mu}^2 N} + \frac{\zeta^2}{\overline{\mu}^2}\right),
	\end{equation*}
 where $z^*$ is the solution of~\eqref{eq:variational-ineq-form}, $R_0^2 = \|z^0 - z^*\|^2, \sigma_{\algname{US}*}^2 = \frac{1}{n}\sum_{i=1}^n \|F_i(z^*)\|^2$ , $\overline{\mu}$ is defined in Example~\ref{ex:uniform_sampling}\\and $\zeta > 0$ is the upper-bound for bias norm (see Definition \ref{def:b_oracle}).
\end{corollarysec}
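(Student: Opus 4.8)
The plan is to obtain this corollary as a direct specialization of Corollary~\ref{cor:S-SEG-corollary-generic} to the uniform-sampling setup. First I would check that the stated choices meet the hypotheses of that corollary. By Example~\ref{ex:uniform_sampling}, uniform sampling with the constant base stepsize $\gamma_{\xi^k}\equiv\gamma=\tfrac{1}{6L_{\max}}$ satisfies Assumption~\ref{as:stepsize_and_mu_conditions}: indeed $\Exp_{\xi^k}[\gamma F_{\xi^k}(z^*)]=\gamma F(z^*)=0$ since $z^*$ solves~\eqref{eq:variational-ineq-form}, and the left-hand side of~\eqref{eq:S_SEG_AS_stepsize_and_mu} equals $\gamma\overline{\mu}\ge 0$. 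It then remains to verify the stepsize bound~\eqref{eq:S_SEG_AS_stepsize_upper_bound}: combining Lipschitzness (Assumption~\ref{as:lipschitzness}) and strong monotonicity (Assumption~\ref{as:str_monotonicity}) of $F_\xi$ through Cauchy--Schwarz gives $|\mu_i|\le L_i\le L_{\max}$ for every $i$, whence $4|\mu_{\xi^k}|+2L_{\xi^k}\le 6L_{\max}$ and $\gamma=\tfrac{1}{6L_{\max}}\le\tfrac{1}{4|\mu_{\xi^k}|+2L_{\xi^k}}$, as needed.

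Next I would instantiate the problem-dependent quantities of Corollary~\ref{cor:S-SEG-corollary-generic}. Because the sampling is uniform and $\gamma_{\xi^k}$ is constant, $\widetilde{\rho}=\tfrac{1}{16}\Exp_{\xi^k}[\gamma_{\xi^k}\mu_{\xi^k}(\obf_{\{\mu_{\xi^k}\ge 0\}}+4\obf_{\{\mu_{\xi^k}<0\}})]=\tfrac{\gamma\overline{\mu}}{16}=\tfrac{\overline{\mu}}{96L_{\max}}$, which is strictly positive since $\overline{\mu}>0$; moreover $\sigma_{\algname{AS}}^2=\Exp_\xi[\gamma_\xi^2\|F_\xi(z^*)\|^2]=\gamma^2\cdot\tfrac1n\sum_{i=1}^n\|F_i(z^*)\|^2=\tfrac{\sigma_{\algname{US}*}^2}{36L_{\max}^2}$, and the coefficient $\widehat{\gamma_2}=\Exp_\xi[\gamma_{1,\xi}^2]$ multiplying $\zeta^2$ equals $\gamma^2=\tfrac{1}{36L_{\max}^2}$ here. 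With $a=\widetilde{\rho}$ and $h=1$ the Stich-type schedule in Corollary~\ref{cor:S-SEG-corollary-generic} — threshold $N\le 1/\widetilde{\rho}=\tfrac{96L_{\max}}{\overline{\mu}}$ and tail $\beta_k=\tfrac{2}{2+\widetilde{\rho}(k-k_0)}$ with $k_0=\lceil N/2\rceil$ — is, up to the labelling of the numerical constants, the schedule displayed in the statement after rescaling numerator and denominator by $48L_{\max}$.

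Finally I would substitute these values into the bound of Corollary~\ref{cor:S-SEG-corollary-generic},
\[
\Exp\!\left[\|z^k-z^*\|^2\right]\le\frac{32R_0^2}{\widetilde{\rho}}\exp\!\left(-\frac{\widetilde{\rho}N}{2}\right)+\frac{45}{2N\widetilde{\rho}^2}\,\sigma_{\algname{AS}}^2+\frac{63\,\widehat{\gamma_2}}{8\widetilde{\rho}^2}\,\zeta^2,
\]
and collect the constants. The first term becomes $\tfrac{3072\,R_0^2 L_{\max}}{\overline{\mu}}\exp(-\tfrac{\overline{\mu}N}{192L_{\max}})$; in the other two the powers of $L_{\max}$ cancel between $\widetilde{\rho}^{-2}\sim L_{\max}^2/\overline{\mu}^2$ and the $\gamma^2\sim 1/L_{\max}^2$ carried by $\sigma_{\algname{AS}}^2$ and by $\widehat{\gamma_2}$, yielding $\tfrac{5760\,\sigma_{\algname{US}*}^2}{\overline{\mu}^2 N}$ and $\tfrac{2016\,\zeta^2}{\overline{\mu}^2}$ respectively. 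Replacing the explicit constants by $\mathcal{O}(\cdot)$, and using $\exp(-\tfrac{\overline{\mu}N}{192L_{\max}})\le\exp(-c\tfrac{\overline{\mu}N}{L_{\max}})$ up to an absolute constant in the rate, gives the claimed $\mathcal{O}(\cdot)$ estimate. I do not expect a genuine obstacle: the only step requiring a short argument is $|\mu_i|\le L_i$, and the rest is careful but routine constant bookkeeping, the one subtlety being to keep the $\gamma^2$ scaling of the noise and bias coefficients so that the $L_{\max}$ factors cancel in the last two terms.
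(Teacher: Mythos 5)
Your proposal is correct and follows essentially the same route as the paper: the paper's proof is exactly the one-line specialization of Corollary~\ref{cor:S-SEG-corollary-generic} with $\widetilde{\rho}=\frac{\overline{\mu}}{96L_{\max}}$, $\sigma_{\algname{AS}}^2=\frac{\sigma_{\algname{US}*}^2}{36L_{\max}^2}$, $\widehat{\gamma_1}=\frac{1}{6L_{\max}}$, $\widehat{\gamma_2}=\frac{1}{36L_{\max}^2}$, and your constant bookkeeping ($3072$, $5760$, $2016$) matches. You only add explicit verification of the stepsize bound \eqref{eq:S_SEG_AS_stepsize_upper_bound} and of Assumption~\ref{as:stepsize_and_mu_conditions}, which the paper leaves implicit; the small factor-of-two discrepancy in the tail schedule $\beta_k$ is an inconsistency already present in the paper's own statement and does not affect the bound.
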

\begin{proof}
    Corollary~\ref{cor:S-SEG-corollary-generic} implies the needed result with:
    \begin{eqnarray*}
        \widetilde{\rho} &=& \frac{1}{16}\Exp_{\xi^k}[\gamma_{\xi^k}\mu_{\xi^k}(\obf_{\{\mu_{\xi^k} \ge 0\}} + 4\cdot\obf_{\{\mu_{\xi^k} < 0\}})] = \frac{\gamma}{16n}\left(\sum\limits_{i: \mu_i \geq 0} \mu_i + 4\sum\limits_{i: \mu_i < 0} \mu_i\right) = \frac{\overline{\mu}}{96L_{\max}},\\
        \sigma_{\algname{AS}}^2 &=& \Exp_\xi\left[\gamma_{\xi}^2\|F_\xi(z^*)\|^2\right] = \frac{\gamma^2}{n}\sum_{i=1}^n \|F_i(z^*)\|^2 = \gamma^2 \sigma_{\algname{US}*}^2 = \frac{\sigma_{\algname{US}*}^2}{36 L_{\max}^2},\\
        \widehat{\gamma_1} &=& \Exp_\xi\left[\gamma_{\xi}\right] = \frac{1}{6L_{\max}},\quad
        \widehat{\gamma_2} = \Exp_\xi\left[\gamma_{\xi}^2\right] = \frac{1}{36L_{\max}^2}.
    \end{eqnarray*}
\end{proof}

\begin{corollarysec}[Proof of Corollary~\ref{cor:S-SEGIS-o-convergence}]\label{cor:S-SEGIS-o-convergence-proof}
    Consider the setup from Example~\ref{ex:importance_sampling}. Let $\overline{\mu} > 0$, $\gamma_{2,\xi^k} = \alpha \gamma_{1,\xi^k}$, $\alpha = \frac{1}{8}$, and $\gamma_{1,\xi^k} = \frac{\beta_k\gamma \overline{L}}{L_{\xi^k}} = \frac{\beta_k}{6L_{\xi^k}}$, where $\overline{L} = \frac{1}{n}\sum_{i=1}^n L_i$ and $0 < \beta_k \leq 1$.\\
	Then, for all $N \ge 0$ (number of steps for \ref{eq:S_SEG} algorithm with the output $z^{N}$) and $\{\beta_k\}_{k\ge 0}$ such that:
	\begin{eqnarray*}
		\text{if } N \le \frac{96\overline{L}}{\overline{\mu}}, && \beta_k = 1,\\
		\text{if } N > \frac{96\overline{L}}{\overline{\mu}} \text{ and } k < k_0, && \beta_k = 1,\\
		\text{if } N > \frac{96\overline{L}}{\overline{\mu}} \text{ and } k \ge k_0, && \beta_k = \frac{96\overline{L}}{96\overline{L} + \overline{\mu}(k - k_0)},
	\end{eqnarray*}
	for $k_0 = \left\lceil\frac{N}{2} \right\rceil$ we have
	\begin{equation*}
	    \Exp\left[\|z^K - z^*\|^2\right] \le \frac{3072 \overline{L}\|z^0 - z^*\|^2}{\overline{\mu}}\exp\left(-\frac{\overline{\mu} N}{192 \overline{L}}\right) + \frac{5760\sigma_{\algname{IS*}}^2}{\overline{\mu}^2 N} + \frac{2016 \zeta^2}{\overline{\mu}^2}.
	\end{equation*} which implies that:
        \begin{equation*}
	    \Exp\left[\|z^k - z^*\|^2\right] = \mathcal{O}\left( \frac{\overline{L}R_0^2}{\overline{\mu}}\exp \left( -\frac{\overline{\mu}N}{\overline{L}} \right) + \frac{\sigma_{\algname{IS}*}^2}{\overline{\mu}^2 N} + \frac{\zeta^2}{\overline{\mu}^2}\right),
	\end{equation*}
 where $z^*$ is the solution of~\eqref{eq:variational-ineq-form}, $R_0^2 = \|z^0 - z^*\|^2, \sigma_{\algname{IS}*}^2 = \frac{1}{n}\sum_{i=1}^n \frac{\overline{L}}{L_i}\|F_i(z^*)\|^2$, $\overline{\mu}$ is defined in Example~\ref{ex:importance_sampling}\\and $\zeta > 0$ is the upper-bound for bias norm (see Definition \ref{def:b_oracle}).
\end{corollarysec}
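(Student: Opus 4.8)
The plan is to derive this bound as a direct specialization of Corollary~\ref{cor:S-SEG-corollary-generic} to the Importance Sampling distribution of Example~\ref{ex:importance_sampling}, for which $\Prob{\xi^k = i} = p_i = L_i/\sum_{j=1}^n L_j = L_i/(n\overline{L})$ and the generic stepsize is $\gamma_{\xi^k} = \gamma\overline{L}/L_{\xi^k} = 1/(6L_{\xi^k})$ with $\alpha = 1/8$. First I would check that the hypotheses of Corollary~\ref{cor:S-SEG-corollary-generic} hold here: Assumption~\ref{as:stepsize_and_mu_conditions} is guaranteed by Example~\ref{ex:importance_sampling} since $\overline{\mu} > 0$ and $\gamma = 1/(6\overline{L}) > 0$; the stepsize condition \eqref{eq:S_SEG_AS_stepsize_upper_bound} follows from the elementary fact that a $(\mu_i,z^*)$-strongly monotone $L_i$-Lipschitz operator has $|\mu_i| \le L_i$ (combine Assumption~\ref{as:str_monotonicity} with Cauchy--Schwarz and Assumption~\ref{as:lipschitzness}), whence $4|\mu_{\xi^k}| + 2L_{\xi^k} \le 6L_{\xi^k}$ and $\gamma_{\xi^k} = 1/(6L_{\xi^k}) \le 1/(4|\mu_{\xi^k}| + 2L_{\xi^k})$; positivity of $\widetilde{\rho}$ will drop out of the computation below.

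The substance of the argument is evaluating the four problem-dependent constants of Corollary~\ref{cor:S-SEG-corollary-generic} for this sampling. The structural point is that $p_i \propto L_i$ cancels exactly one factor of $L_i^{-1}$ coming from $\gamma_i$, so the $p$-weighted expectations collapse cleanly:
\[
\widetilde{\rho} = \frac{1}{16}\sum_{i=1}^n p_i\gamma_i\mu_i\bigl(\obf_{\{\mu_i\ge 0\}} + 4\obf_{\{\mu_i<0\}}\bigr) = \frac{1}{96n\overline{L}}\Bigl(\sum_{i:\mu_i\ge 0}\mu_i + 4\sum_{i:\mu_i<0}\mu_i\Bigr) = \frac{\overline{\mu}}{96\overline{L}} > 0,
\]
\[
\sigma_{\algname{AS}}^2 = \sum_{i=1}^n p_i\gamma_i^2\norms{F_i(z^*)}^2 = \frac{1}{36\overline{L}^2}\cdot\frac{1}{n}\sum_{i=1}^n\frac{\overline{L}}{L_i}\norms{F_i(z^*)}^2 = \frac{\sigma_{\algname{IS}*}^2}{36\overline{L}^2}, \qquad \widehat{\gamma_1} = \sum_{i=1}^n p_i\gamma_i = \frac{1}{6\overline{L}},
\]
and $\widehat{\gamma_2} = \sum_{i=1}^n p_i\gamma_i^2 = \tfrac{1}{36n\overline{L}}\sum_{i=1}^n L_i^{-1}$ (which one may further bound by $\tfrac{1}{36\overline{L}L_{\min}}$ if an $\overline{L}$-free numerical constant is wanted in the $\zeta^2$-term). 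In particular $1/\widetilde{\rho} = 96\overline{L}/\overline{\mu}$, so the schedule for $\{\beta_k\}$ prescribed in the statement is precisely the one required by Corollary~\ref{cor:S-SEG-corollary-generic}.

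It then remains to substitute these values together with $\alpha = 1/8$ and $k_0 = \lceil N/2\rceil$ into the conclusion of Corollary~\ref{cor:S-SEG-corollary-generic} — which bounds $\Exp[\norms{z^N-z^*}^2]$ by an exponentially decaying term proportional to $\norms{z^0-z^*}^2/\widetilde{\rho}$ plus $\tfrac{45}{2N\widetilde{\rho}^2}\sigma_{\algname{AS}}^2$ plus $\tfrac{63\widehat{\gamma_2}}{8\widetilde{\rho}^2}\zeta^2$ — and collect constants: $\tfrac{32}{\widetilde{\rho}} = \tfrac{3072\overline{L}}{\overline{\mu}}$, $\tfrac{\widetilde{\rho}}{2} = \tfrac{\overline{\mu}}{192\overline{L}}$, $\tfrac{45}{2\widetilde{\rho}^2}\sigma_{\algname{AS}}^2 = \tfrac{5760\,\sigma_{\algname{IS}*}^2}{\overline{\mu}^2}$, and the $\zeta^2$-term reducing to a $\tfrac{\zeta^2}{\overline{\mu}^2}$-type contribution with constant $2016$; the $\mathcal{O}(\cdot)$ form then follows by absorbing constants and using $R_0^2 = \norms{z^0-z^*}^2$. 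The main (essentially the only) obstacle is the bookkeeping in the middle step: one must track carefully that the powers of $p_i \propto L_i$ and $\gamma_i \propto L_i^{-1}$ combine so as to produce exactly the reweighted quantity $\sigma_{\algname{IS}*}^2 = \tfrac{1}{n}\sum_i \tfrac{\overline{L}}{L_i}\norms{F_i(z^*)}^2$ and to replace every $L_{\max}$ of the Uniform Sampling proof (Corollary~\ref{cor:S-SEGUS-o-convergence-proof}) by $\overline{L}$; beyond that, the two arguments are line-for-line identical.
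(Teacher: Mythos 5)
Your proposal is correct and follows essentially the same route as the paper: both proofs specialize Corollary~\ref{cor:S-SEG-corollary-generic} to the Importance Sampling distribution of Example~\ref{ex:importance_sampling} and read off $\widetilde{\rho}=\frac{\overline{\mu}}{96\overline{L}}$, $\sigma_{\algname{AS}}^2=\frac{\sigma_{\algname{IS}*}^2}{36\overline{L}^2}$, $\widehat{\gamma_1}=\frac{1}{6\overline{L}}$ and $\widehat{\gamma_2}$, your extra verification of Assumption~\ref{as:stepsize_and_mu_conditions} and of \eqref{eq:S_SEG_AS_stepsize_upper_bound} via $|\mu_i|\le L_i$ being a harmless (and welcome) addition. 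One remark: your computation $\widehat{\gamma_2}=\frac{1}{36n\overline{L}}\sum_{i} L_i^{-1}$ is the exact value, whereas the paper writes $\Exp_\xi\left[\gamma_\xi^2\right]=\frac{1}{36\overline{L}^2}$, which is an equality only when all $L_i$ coincide; so the precise constant $2016$ in the $\zeta^2$-term is really justified only up to the factor $\overline{L}\cdot\frac{1}{n}\sum_i L_i^{-1}$ (at worst $\overline{L}/L_{\min}$, as your bound $\widehat{\gamma_2}\le\frac{1}{36\overline{L}L_{\min}}$ indicates), a dependence consistent with the $L_{\min}$ factors appearing in the paper's Table~\ref{tab:all_types}.
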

\begin{proof}
    Corollary~\ref{cor:S-SEG-corollary-generic} implies the needed result with:
    \begin{eqnarray*}
        \widetilde{\rho} &=& \frac{1}{16}\Exp_{\xi^k}[\gamma_{\xi^k}\mu_{\xi^k}(\obf_{\{\mu_{\xi^k} \ge 0\}} + 4\cdot\obf_{\{\mu_{\xi^k} < 0\}})] = \frac{\gamma}{16n}\left(\sum\limits_{i: \mu_i \geq 0} \mu_i + 4\sum\limits_{i: \mu_i < 0} \mu_i\right) = \frac{\overline{\mu}}{96\overline{L}},\\
        \sigma_{\algname{AS}}^2 &=& \Exp_\xi\left[\gamma_{\xi}^2\|F_\xi(x^*)\|^2\right] = \frac{\gamma^2}{n}\sum_{i=1}^n \frac{\overline{L}}{L_i}\|F_i(x^*)\|^2 = \gamma^2 \sigma_{\algname{IS}*}^2 = \frac{\sigma_{\algname{IS}*}^2}{36 \overline{L}^2},\\
        \widehat{\gamma_1} &=& \Exp_\xi\left[\gamma_{\xi}\right] = \frac{1}{6\overline{L}},\quad
        \widehat{\gamma_2} = \Exp_\xi\left[\gamma_{\xi}^2\right] = \frac{1}{36\overline{L}^2}.
    \end{eqnarray*}
\end{proof}

\subsection{Main result: missing proofs and assumptions}
In this section we substitute gradient oracle to the gradient approximation. To derive the main convergence result, we need to estimate bias and second moment of gradient approximation:

\begin{lemmasec}[Bias of gradient approximation]\label{lem:bias-of-gradapprox}
Let Assumption~\ref{as:lipschitzness} hold. Then gradient approximation (see~Definition~\ref{def:gradientfreeapprox}) satisfies the following inequality:
    \begin{eqnarray*}
        \|\Exp\left[ g(z_k,\xi,e) \right] - \nabla f(z_k) \| &\leq& L_{\xi}\tau + \frac{d \Delta}{\tau},
    \end{eqnarray*}
    where $d = d_x + d_y$,  $\tau$ is a smoothing parameter (see Definition~\ref{def:gradientfreeapprox}) and $\Delta > 0$ is the upper-bound for bias norm (see Definition \ref{def:gradientfreeoracle}).
\end{lemmasec}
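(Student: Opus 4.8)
The plan is to split the gradient approximation into a ``true function'' contribution and a ``noise'' contribution and to bound each separately. Expanding $\tilde f(z,\xi)=f(z,\xi)+\delta(z)$ in Definition~\ref{def:gradientfreeapprox} gives
\begin{equation*}
g(z,\xi,e)=\frac{d}{2\tau}\bigl(f(z+\tau e,\xi)-f(z-\tau e,\xi)\bigr)e+\frac{d}{2\tau}\bigl(\delta(z+\tau e)-\delta(z-\tau e)\bigr)e,
\end{equation*}
so, taking the expectation over the randomizing direction $e\sim\mathrm{Uniform}(S^d(1))$ (the sample $\xi$ being fixed, so that $\nabla f(z)$ here stands for $\nabla f(z,\xi)$), it suffices by the triangle inequality to show that the expectation of the first term is $L_\xi\tau$-close to $\nabla f(z)$ and that the expectation of the second term has norm at most $d\Delta/\tau$.

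For the first term I would invoke the standard $l_2$-smoothing identity: using the symmetry $e\mapsto-e$ followed by the divergence theorem, $\Exp_e\bigl[\tfrac{d}{2\tau}(f(z+\tau e,\xi)-f(z-\tau e,\xi))e\bigr]=\nabla f_\tau(z,\xi)$, where $f_\tau(z,\xi):=\Exp_u[f(z+\tau u,\xi)]$ with $u$ uniform on the unit ball $B^d(1)$. Assumption~\ref{as:lipschitzness} applied to $F_\xi(z)=[\nabla_x f(x,y,\xi);-\nabla_y f(x,y,\xi)]$ is precisely $L_\xi$-smoothness of $f(\cdot,\xi)$ (the sign flip on the $y$-block leaves the Euclidean norm unchanged), so by Jensen's inequality
\begin{equation*}
\norms{\nabla f_\tau(z,\xi)-\nabla f(z,\xi)}=\norms{\Exp_u\bigl[\nabla f(z+\tau u,\xi)-\nabla f(z,\xi)\bigr]}\le\Exp_u\bigl[L_\xi\norms{\tau u}\bigr]\le L_\xi\tau,
\end{equation*}
since $\norms{u}\le1$. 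For the noise term I would push the norm inside the expectation and use $|\delta(\cdot)|\le\Delta$ together with $\norms{e}=1$:
\begin{equation*}
\norms{\Exp_e\Bigl[\tfrac{d}{2\tau}\bigl(\delta(z+\tau e)-\delta(z-\tau e)\bigr)e\Bigr]}\le\frac{d}{2\tau}\Exp_e\bigl[|\delta(z+\tau e)|+|\delta(z-\tau e)|\bigr]\le\frac{d}{2\tau}\cdot2\Delta=\frac{d\Delta}{\tau}.
\end{equation*}
Adding the two bounds via the triangle inequality yields the claimed estimate.

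I do not anticipate a genuine obstacle. The only non-elementary ingredient is the smoothing identity $\Exp_e[\,\cdot\,]=\nabla f_\tau$, which is classical in the gradient-free literature and can either be cited or recorded as a one-line auxiliary lemma; everything else reduces to the triangle inequality, Jensen's inequality, and $\norms{e}=1$. The points to state carefully are the translation between Assumption~\ref{as:lipschitzness} on the operator $F_\xi$ and $L_\xi$-smoothness of $f(\cdot,\xi)$, and the fact that the expectation is taken only over $e$, so that the per-sample constant $L_\xi$ appears rather than an averaged one.
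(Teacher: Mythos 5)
Your proposal is correct and follows essentially the same route as the paper's proof: symmetry of the direction $e$, the divergence/Stokes identity turning the smoothed difference into $\Exp_u[\nabla f(z+\tau u,\xi)]$ over the unit ball, the triangle inequality with $|\delta|\le\Delta$ and $\norms{e}=1$ for the $d\Delta/\tau$ term, and per-sample Lipschitzness for the $L_\xi\tau$ term. The only cosmetic differences are the order of splitting (noise first versus symmetrizing first) and your use of Jensen's inequality where the paper uses the variational representation of the norm; your explicit remark that the expectation is over $e$ only and that Assumption~\ref{as:lipschitzness} on $F_\xi$ translates into $L_\xi$-smoothness of $f(\cdot,\xi)$ is a welcome clarification of a point the paper leaves implicit.
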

\begin{proof}
    Using the variational representation of the Euclidean norm and Definition \ref{def:gradientfreeapprox} we can write:
    \begin{eqnarray*}
        \|\Exp\left[ g(z_k,\xi,e) \right] - \nabla f(z_k) \| &=&
        \norms{\expect{\frac{d}{2 \tau}\left( \Tilde{f} \left( z_k + \tau e, \xi) - \Tilde{f}(z_k - \tau e, \xi \right) \right) e} - \nabla f(z_k)} \\
        &\overset{\circledOne}{=}& \norms{\expect{\frac{d}{\tau}\left( f \left( z_k + \tau e, \xi) + \delta(z_k + \tau e\right) \right) e} - \nabla f(z_k)} \\
        & \overset{\circledTwo}{\leq}&  \norms{\expect{\frac{d}{\tau} f(z_k + \tau e, \xi) e} - \nabla f(z_k)} + \frac{d \Delta}{\tau} \\
        &\overset{\circledThree}{=}& \norms{\expect{\nabla f(z_k + \tau u, \xi)} - \nabla f(z_k)} + \frac{d \Delta}{\tau} \\
        &=& \sup_{v \in S^d(1)} \expect{| \nabla_v f(z_k + \tau u) - \nabla_v f(z_k)|} + \frac{d \Delta}{\tau} \\
        &\overset{\ref{as:lipschitzness}}{\leq}& L_{\xi} \tau \expect{\norms{u}} + \frac{d \Delta}{\tau} \leq L_{\xi} \tau + \frac{d \Delta}{\tau},
    \end{eqnarray*}
    where $u \in B^d(1)$, $\circledOne =$ the equality is obtained from the fact, namely, distribution of $e$ is symmetric, $\circledTwo =$ the inequality is obtain from bounded noise $|\delta(x)| \leq \Delta$, $\circledThree =$ the equality is obtained from a version of Stokes’ theorem.
\end{proof}

\begin{lemmasec}[Second moment of gradient approximation]\label{lem:bounding-sec-moment-grad-approx}
Let Assumptions~\ref{as:lipschitzness} and \ref{ass:stoch_noise} hold. Then gradient approximation (see~Definition~\ref{def:gradientfreeapprox}) satisfies the following inequality:
    \begin{eqnarray*}
         \expect{\norms{g(z^*,\xi,e)}^2} &\leq& 4d \sigma^2_* + 4 d L_{\xi}^2 \tau^2 + \frac{d^2 \Delta^2}{\tau^2}
    \end{eqnarray*}
    where $d = d_x + d_y$, $\tau$ is a smoothing parameter (see Definition~\ref{def:gradientfreeapprox}) and $\Delta > 0$ is the upper-bound for bias norm (see Definition \ref{def:gradientfreeoracle}).
\end{lemmasec}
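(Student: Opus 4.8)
The plan is to expand the squared norm of the gradient approximation at $z^*$ and peel off the contributions coming from (i) the true stochastic gradient at $z^*$, (ii) the smoothing error controlled by Lipschitzness, and (iii) the adversarial noise $\delta$. First I would write, using Definition~\ref{def:gradientfreeapprox} and $\|e\|=1$,
\begin{equation*}
\norms{g(z^*,\xi,e)}^2 = \frac{d^2}{4\tau^2}\left(\Tilde f(z^*+\tau e,\xi) - \Tilde f(z^*-\tau e,\xi)\right)^2,
\end{equation*}
and then substitute $\Tilde f(z,\xi) = f(z,\xi) + \delta(z)$ to split the difference into a ``clean'' part $f(z^*+\tau e,\xi) - f(z^*-\tau e,\xi)$ and a ``noise'' part $\delta(z^*+\tau e) - \delta(z^*-\tau e)$, whose absolute value is at most $2\Delta$ by Definition~\ref{def:gradientfreeoracle}. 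Applying $(a+b)^2 \le 2a^2 + 2b^2$ (a special case of~\eqref{eq:a+b}) splits $\expect{\norms{g(z^*,\xi,e)}^2}$ into a term $\tfrac{d^2}{2\tau^2}\expect{(f(z^*+\tau e,\xi)-f(z^*-\tau e,\xi))^2}$ and a term bounded by $\tfrac{d^2}{2\tau^2}\cdot 4\Delta^2 = \tfrac{2d^2\Delta^2}{\tau^2}$; keeping constants a bit looser this yields the claimed $\tfrac{d^2\Delta^2}{\tau^2}$ piece.

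Next I would handle the clean term. The standard route is to insert $\pm$ the first-order Taylor-type reference: write $f(z^*+\tau e,\xi) - f(z^*-\tau e,\xi) = \langle \nabla f(z^*,\xi), 2\tau e\rangle + (\text{remainder})$, where the remainder is controlled by $L_\xi$-smoothness of $f(\cdot,\xi)$ (equivalently $L_\xi$-Lipschitzness of $F_\xi$ from Assumption~\ref{as:lipschitzness}) as $O(L_\xi \tau^2)$. Squaring and using~\eqref{eq:a+b} again gives two contributions: one proportional to $\expect{\langle \nabla f(z^*,\xi), e\rangle^2}\cdot d^2$, and one proportional to $d^2 L_\xi^2 \tau^4/\tau^2 = d^2 L_\xi^2\tau^2$. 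For the first, I use the spherical-average identity $\expect{\langle v, e\rangle^2} = \tfrac{1}{d}\norms{v}^2$ for $e$ uniform on $S^d(1)$, so that $\tfrac{d^2}{\tau^2}\cdot \tau^2 \expect{\langle \nabla f(z^*,\xi),e\rangle^2}$ collapses to $d\,\expect{\norms{\nabla f(z^*,\xi)}^2}$; then $\norms{\nabla f(z^*,\xi)}^2 \le 2\norms{\nabla f(z^*,\xi) - \nabla f(z^*)}^2 + 2\norms{\nabla f(z^*)}^2$, the first summand is $\le 2\sigma_*^2$ by Assumption~\ref{ass:stoch_noise}, and $\nabla f(z^*) = 0$ because $z^*$ solves the variational inequality~\eqref{eq:variational-ineq-form} (so $F(z^*)=0$, i.e.\ the full gradient vanishes at $z^*$). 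Tracking constants through the $(a+b)^2$ splits and the factor-of-$2$ gradient-variance bound produces the coefficient $4d$ in front of $\sigma_*^2$ and $4d$ in front of $L_\xi^2\tau^2$.

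The main obstacle is bookkeeping the constants so they match the stated $4d\sigma_*^2 + 4dL_\xi^2\tau^2 + d^2\Delta^2/\tau^2$: there are several applications of $(a+b)^2 \le 2a^2+2b^2$ (noise vs.\ clean, linear vs.\ remainder, variance vs.\ value at $z^*$) and the smoothness remainder constant must be pinned down (the integral form of Taylor's theorem with $L_\xi$-Lipschitz gradient gives remainder $\le \tfrac{L_\xi}{2}\|2\tau e\|^2 = 2L_\xi\tau^2$ per side, so $\le$ something like $2L_\xi\tau^2$ after symmetrization). A cleaner alternative that avoids Taylor remainders is to replace $f(z^*\pm\tau e,\xi)$ directly by $f(z^*\pm\tau e,\xi) - f(z^*,\xi)$, bound each increment by the gradient-of-the-smoothed-function argument, or simply invoke the bias lemma machinery; but the Taylor route is the most self-contained. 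Beyond constant-chasing, every individual step (symmetry of $e$, the $\expect{\langle v,e\rangle^2} = \|v\|^2/d$ identity, $\nabla f(z^*) = 0$, Assumption~\ref{ass:stoch_noise}, $|\delta|\le\Delta$) is routine, so I expect no genuine difficulty.
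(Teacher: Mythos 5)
There is a genuine gap, and it sits exactly where you wave at "constant bookkeeping": the smoothing (remainder) term. Your treatment of the other two pieces is fine — splitting off the $\delta$-noise with \eqref{eq:a+b} gives the $\nicefrac{d^2\Delta^2}{\tau^2}$ piece (honestly $\nicefrac{2d^2\Delta^2}{\tau^2}$, the same loose constant the paper itself carries), and the linear term via $\expect{\langle v,e\rangle^2}=\norms{v}^2/d$ together with $\nabla f(z^*)=0$ and Assumption~\ref{ass:stoch_noise} does give $4d\sigma_*^2$. But for the Taylor remainder $r(e)=f(z^*+\tau e,\xi)-f(z^*-\tau e,\xi)-\langle\nabla f(z^*,\xi),2\tau e\rangle$ you only invoke the pointwise bound $|r|\le c\,L_\xi\tau^2$, and, as your own arithmetic shows, this yields a contribution of order $d^2L_\xi^2\tau^2$. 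The lemma claims $4dL_\xi^2\tau^2$; passing from $d^2$ to $4d$ is not constant-chasing but a factor-of-$d$ improvement that a deterministic bound on $r$ cannot deliver — you must exploit averaging of $r$ over the sphere, just as you did for the linear term, and your sketch provides no mechanism for that.

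The paper's proof gets the dimension gain by applying the Wirtinger--Poincar\'e inequality (Lemma~\ref{lem:Wirtinger_Poincare}) to the whole symmetric difference \emph{before} any expansion,
\begin{equation*}
\expect{\left(f(z^*+\tau e,\xi)-f(z^*-\tau e,\xi)\right)^2}\le \frac{\tau^2}{d}\,\expect{\norms{\nabla f(z^*+\tau e,\xi)+\nabla f(z^*-\tau e,\xi)}^2},
\end{equation*}
and only then inserts $\pm 2\nabla f(z^*,\xi)$ and uses Assumption~\ref{as:lipschitzness} with \eqref{eq:a+b}; the $\nicefrac{\tau^2}{d}$ factor is what turns $d^2$ into $d$ in \emph{both} the $\sigma_*^2$ term and the $L_\xi^2\tau^2$ term. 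Your Taylor route can be repaired, but only by invoking the same spherical Poincar\'e inequality for the remainder itself: $r$ is odd in $e$ (hence mean zero) and its gradient in $e$ equals $\tau\left(\nabla f(z^*+\tau e,\xi)+\nabla f(z^*-\tau e,\xi)-2\nabla f(z^*,\xi)\right)$, of norm at most $2L_\xi\tau^2$, so the Poincar\'e inequality on the sphere gives $\expect{r^2}\lesssim \nicefrac{L_\xi^2\tau^4}{d}$ and hence the desired $O(dL_\xi^2\tau^2)$. Since that is precisely the key lemma your "self-contained" detour was meant to avoid, the Taylor expansion buys nothing; as written, your argument proves only the weaker bound $4d\sigma_*^2+O(d^2)L_\xi^2\tau^2+O(1)\nicefrac{d^2\Delta^2}{\tau^2}$, not the stated lemma.
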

\begin{proof}
    By Definition \ref{def:gradientfreeapprox} and Wirtinger-Poincare (see Lemma \ref{lem:Wirtinger_Poincare}) we have:
    \begin{eqnarray*}
        \expect{\norms{g(z^*,\xi,e)}^2} &=& \frac{d^2}{4 \tau^2} \expect{\norms{\left(\Tilde{f}(z^* + \tau e, \xi) - \Tilde{f}(z^* - \tau e, \xi)\right) e}^2} \\
         &=& \frac{d^2}{4 \tau^2} \expect{\left(f(z^* + \tau e, \xi) - f(z^* - \tau e, \xi) + \delta (z^* + \tau e) - \delta (z^* -\tau e)\right)^2}\\
        &\overset{\eqref{eq:a+b}}{\leq}& \frac{d^2}{2 \tau^2} \left( \expect{\left(f(z^* + \tau e, \xi) - f(z^* - \tau e, \xi)\right)^2} + 2 \Delta^2 \right)\\
        &\overset{\ref{lem:Wirtinger_Poincare}}{\leq}& \frac{d^2}{2 \tau^2} \left( \frac{\tau^2}{d} \expect{\norms{ \nabla f(z^* + \tau e, \xi) + \nabla f(z^* - \tau e, \xi)}^2} + 2 \Delta^2 \right) \\
        &=& \frac{d^2}{2 \tau^2} \left( \frac{\tau^2}{d} \expect{\norms{ \nabla f(z^* + \tau e, \xi) + \nabla f(z^* - \tau e, \xi) \pm 2 \nabla f(z^*, \xi)}^2} + 2 \Delta^2 \right) \\
        &\overset{\ref{as:lipschitzness}, \eqref{eq:a+b}}{\leq}& 4d \norms{\nabla f(z^*, \xi)}^2 + 4 d  L_{\xi}^2 \tau^2 \expect{\norms{e}^2}  + \frac{d^2 \Delta^2}{\tau^2} \\
        &\overset{\circledOne}{\leq}& 4d \sigma^2_* + 4 d  L_{\xi}^2 \tau^2 + \frac{d^2 \Delta^2}{\tau^2}, 
    \end{eqnarray*}
    where $\circledOne =$ the inequality obtained from Assumption \ref{ass:stoch_noise}.
\end{proof}

We can now explicitly write down the convergence of the gradient-free SEG-US method.
   
\begin{lemmasec}[Proof of Lemma~\ref{lem:zosseg-convergence-rate}]\label{proof:zosseg-convergence-rate-lemma}
     Substituting upper bounds on the bias (Lemma \ref{lem:bias-of-gradapprox}) and second moment (Lemma \ref{lem:bounding-sec-moment-grad-approx}) for the gradient approximation (see Definition \ref{def:gradientfreeapprox}) in the convergence of the first-order method (Theorem \ref{thm:S-SEG_convergence_rate}).
\end{lemmasec}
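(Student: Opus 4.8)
The plan is to observe that the batched gradient approximation $g_k^B$ driving Algorithm~\ref{alg:zosseg} is itself an instance of the biased stochastic oracle of Definition~\ref{def:b_oracle} for the saddle operator $F(z) = \left[\nabla_x f(x,y);\, -\nabla_y f(x,y)\right]$, and then to read the bound off Corollary~\ref{cor:S-SEGUS-o-convergence} after substituting its two problem-dependent constants with the estimates of Lemmas~\ref{lem:bias-of-gradapprox} and~\ref{lem:bounding-sec-moment-grad-approx}. Concretely, I would write $g^B(z,\xi,e) = F^B_\xi(z) + b(z)$ with deterministic bias $b(z) := \Exp_{\xi,e}\left[g^B(z,\xi,e)\right] - \nabla f(z)$ and centered residual $F^B_\xi(z) := g^B(z,\xi,e) - b(z)$, so that $\Exp\left[F^B_\xi(z^*)\right] = \nabla f(z^*) = 0$; then~\eqref{eq:S_SEG_AS_stepsize_1} holds, and~\eqref{eq:S_SEG_AS_stepsize_and_mu} holds with the constant stepsize $\gamma = \tfrac{1}{6L_{\max}}$ because $\overline{\mu} \ge 0$, exactly as in Example~\ref{ex:uniform_sampling}. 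Since $g_k^B$ and $g_{k+\frac12}^B$ reuse the same batch $\{(e_i,\xi_i)\}_{i=1}^B$, I would let the $\xi^k$ of Section~\ref{sec:biased_gradient} be this entire batch, which is precisely the same-sample setting the \algname{S-SEG} analysis is built for.

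For the bias constant, note that averaging a batch does not change the mean, so $b$ equals $\Exp_e\left[g(z,\xi,e)\right] - \nabla f(z)$ for a single sample, and Lemma~\ref{lem:bias-of-gradapprox} together with $L_\xi \le L_{\max}$ gives $\|b(z)\| \le L_{\max}\tau + \tfrac{d\Delta}{\tau}$ for all $z$; hence I would take $\zeta^2 \lesssim L_{\max}^2\tau^2 + \tfrac{d^2\Delta^2}{\tau^2}$ (using $(a+b)^2 \le 2a^2+2b^2$). Substituting this $\zeta^2$ into the term $\tfrac{\zeta^2}{\overline{\mu}^2}$ of Corollary~\ref{cor:S-SEGUS-o-convergence} yields the two non-decaying terms $\tfrac{L_{\max}^2\tau^2}{\overline{\mu}^2} + \tfrac{d^2\Delta^2}{\overline{\mu}^2\tau^2}$ of the claimed rate.

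For the variance at the solution, the $\tfrac1N$-term of Corollary~\ref{cor:S-SEGUS-o-convergence} carries the constant $\sigma_{\algname{US}*}^2 = \Exp_\xi\big\|F^B_\xi(z^*)\big\|^2$ (with the stepsize $\gamma = \tfrac{1}{6L_{\max}}$ absorbed). Since $F^B_\xi(z^*) = g^B(z^*,\xi,e) - \Exp\left[g^B(z^*,\xi,e)\right]$ is a centered average of $B$ i.i.d.\ terms, $\Exp\big\|F^B_\xi(z^*)\big\|^2 = \tfrac{1}{B}\,\mathrm{Var}\big(g(z^*,\xi,e)\big) \le \tfrac{1}{B}\,\Exp\big\|g(z^*,\xi,e)\big\|^2$, and Lemma~\ref{lem:bounding-sec-moment-grad-approx} bounds the last quantity by $4d\sigma_*^2 + 4dL_{\max}^2\tau^2 + \tfrac{d^2\Delta^2}{\tau^2}$; thus $\sigma_{\algname{US}*}^2 = \mathcal{O}\big(\tfrac{1}{B}\big(d\sigma_*^2 + dL_{\max}^2\tau^2 + \tfrac{d^2\Delta^2}{\tau^2}\big)\big)$, and putting this into $\tfrac{\sigma_{\algname{US}*}^2}{\overline{\mu}^2 N}$ produces the three decaying terms $\tfrac{d\sigma_*^2}{\overline{\mu}^2 NB} + \tfrac{dL_{\max}^2\tau^2}{\overline{\mu}^2 NB} + \tfrac{d^2\Delta^2}{\overline{\mu}^2 NB\tau^2}$, while the exponential term $\tfrac{L_{\max}R_0^2}{\overline{\mu}}\exp\big(-\tfrac{\overline{\mu}N}{L_{\max}}\big)$ is inherited unchanged. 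Collecting the six terms would then give the stated bound.

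The step I expect to be the real obstacle is not this bookkeeping but justifying that the abstract hypotheses behind Corollary~\ref{cor:S-SEGUS-o-convergence} — Assumptions~\ref{as:lipschitzness} and~\ref{as:str_monotonicity}, and hence (through Theorem~\ref{thm:S-SEG_convergence_rate}) Assumption~\ref{as:unified_assumption_general} with the stepsize restriction~\eqref{eq:S_SEG_AS_stepsize_upper_bound} — genuinely transfer to the split $g^B = F^B_\xi + b$: one needs to argue that $L$-smoothness and convexity--concavity of $f(\cdot,\xi)$ descend to the smoothed operator underlying $F^B_\xi$, that $b$ stays deterministic and bounded, and that $\gamma = \tfrac{1}{6L_{\max}}$ still meets~\eqref{eq:S_SEG_AS_stepsize_upper_bound}, so that the cross-terms involving $b$ are absorbed exactly as in Lemmas~\ref{lem:second_moment_bound_S_SEG} and~\ref{lem:P_k_bound_S_SEG}. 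Once that reduction is secured, Lemma~\ref{lem:zosseg-convergence-rate} is an immediate specialization of Corollary~\ref{cor:S-SEGUS-o-convergence}.
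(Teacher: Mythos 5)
Your proposal is correct and is essentially the paper's own argument: the paper's proof is precisely the one-line substitution of the bias bound (Lemma~\ref{lem:bias-of-gradapprox}) for $\zeta$ and of the second-moment bound (Lemma~\ref{lem:bounding-sec-moment-grad-approx}), reduced by the factor $B$ through batching, into the biased \algname{S-SEG} rate (Theorem~\ref{thm:S-SEG_convergence_rate}, specialized to Uniform Sampling as in Corollary~\ref{cor:S-SEGUS-o-convergence}). Your additional bookkeeping --- the explicit split $g^B = F^B_\xi + b$ with deterministic $b$, the $\tfrac1B$ variance reduction, and the flagged transfer of Assumptions~\ref{as:lipschitzness}--\ref{as:str_monotonicity} to the smoothed operator --- only spells out steps the paper leaves implicit.
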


\begin{theoremsec}[Proof of Theorem~\ref{th:zosseg-epsilon-acc}]\label{proof:zosseg-epsilon-acc-proof}
        Let Assumptions \ref{as:lipschitzness} and \ref{as:str_monotonicity} hold and let gradient approximation (see~Definition~\ref{def:gradientfreeapprox}) satisfy Assumption \ref{ass:stoch_noise}, then Zero-Order Same-Sample Stochastic Extragradient algorithm with Uniform~Sampling~(see Algorithm~\ref{alg:zosseg}) achieves $\varepsilon$-accuracy: $\expect{ \| x^N - z^* \| ^ 2} \leq \varepsilon$ after: 
        \begin{eqnarray*}
           N &=& \mathcal{O}\left( \frac{L_{\max}}{\overline{\mu}} \ln \left( \frac{R_0^2 L_{\max}}{\varepsilon \overline{\mu}} \right) \right) ;\quad 
           B = \mathcal{O}\left( \max\left\{ \frac{d \overline{\mu}}{L_{\max} \ln \left( \frac{R_0^2 L_{\max}}{\varepsilon \overline{\mu}} \right)} \max \left\{ \frac{\sigma_{*}^2}{\varepsilon \overline{\mu}^2 } ,  1 \right\}, 1 \right\} \right); \\
           \tau &\leq& \frac{\overline{\mu}}{L_{\max}} \sqrt{\varepsilon}; \quad
           \Delta \leq \frac{\varepsilon \overline{\mu}^2}{L_{\max} \sqrt{d}} \min \left\{ \max \left\{ 1 , \frac{ \sigma_{*}}{\overline{\mu} \sqrt{\varepsilon}}, \sqrt{\frac{L_{\max} \ln \left( \frac{R_0^2 L_{\max}}{\varepsilon \overline{\mu}} \right)}{d \overline{\mu}}} \right\}, \frac{1}{\sqrt{d}} \right\}.
        \end{eqnarray*}
        We can now write O-notation for $T = N \cdot B$:
        \begin{eqnarray*}
             T = N \cdot B = \mathcal{O}\left(\max \left\{ d, \frac{d \sigma_{*}^2}{\varepsilon \overline{\mu}^2}, \frac{L_{\max}}{\overline{\mu}} \ln \left( \frac{R_0^2 L_{\max}}{\varepsilon \overline{\mu}} \right) \right\} \right).
        \end{eqnarray*}
\end{theoremsec}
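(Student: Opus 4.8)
The plan is to start from the convergence bound of Lemma~\ref{lem:zosseg-convergence-rate}, which controls $\expect{\norms{z^N-z^*}^2}$ by a sum of six terms, and to require that each of the six terms be at most a constant multiple of $\varepsilon$ (taking each $\le\varepsilon/6$ suffices and only affects hidden constants). Each such inequality isolates one of the free parameters $N,\tau,B,\Delta$, so the real work is carrying out the substitutions in a dependency-respecting order. First I would handle the exponentially decaying term $\frac{L_{\max}R_0^2}{\overline{\mu}}\exp(-\overline{\mu}N/L_{\max})$, which forces $N=\mathcal{O}\!\left(\tfrac{L_{\max}}{\overline{\mu}}\ln\tfrac{R_0^2 L_{\max}}{\varepsilon\overline{\mu}}\right)$. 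Next, the term $\frac{L_{\max}^2\tau^2}{\overline{\mu}^2}$ (which involves neither $N$ nor $B$) forces $\tau\le\tfrac{\overline{\mu}}{L_{\max}}\sqrt{\varepsilon}$; throughout the remaining substitutions I would take $\tau$ at this largest admissible value, since this is exactly what makes the admissible noise level largest (at the cost of an extra branch appearing in the bound for $B$).

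With $N$ fixed and $\tau$ at its ceiling, I would then treat the two terms $\frac{d\sigma_*^2}{\overline{\mu}^2 NB}$ and $\frac{dL_{\max}^2\tau^2}{\overline{\mu}^2 NB}$: solving each for $B$ gives the lower bounds $B\gtrsim\tfrac{d\sigma_*^2}{\varepsilon\overline{\mu}L_{\max}\ln(\cdot)}$ and $B\gtrsim\tfrac{d\overline{\mu}}{L_{\max}\ln(\cdot)}$ respectively (writing $\ln(\cdot)=\ln\tfrac{R_0^2 L_{\max}}{\varepsilon\overline{\mu}}$), so $B=\mathcal{O}\!\left(\max\!\left\{\tfrac{d\sigma_*^2}{\varepsilon\overline{\mu}L_{\max}\ln(\cdot)},\tfrac{d\overline{\mu}}{L_{\max}\ln(\cdot)},1\right\}\right)$, which is precisely the displayed $B$ after factoring $\tfrac{d\overline{\mu}}{L_{\max}\ln(\cdot)}$ out of the first two branches. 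Finally $\Delta$ is constrained by the two remaining terms. The term $\frac{d^2\Delta^2}{\overline{\mu}^2\tau^2}\le\varepsilon$, with $\tau$ at its ceiling, gives $\Delta\lesssim\tfrac{\varepsilon\overline{\mu}^2}{dL_{\max}}$; this is the branch of the claimed minimum selected by the inner factor $\tfrac{1}{\sqrt d}$. The term $\frac{d^2\Delta^2}{\overline{\mu}^2 NB\tau^2}\le\varepsilon$ gives $\Delta\lesssim\tfrac{\overline{\mu}\tau}{d}\sqrt{NB\varepsilon}$; inserting $N$ and $\tau$ and then tracing which of the three lower bounds of $B$ is active ($1$, the $\tfrac{\sigma_*^2}{\varepsilon\overline{\mu}^2}$-scaled one, or the $\tfrac{L_{\max}\ln(\cdot)}{d\overline{\mu}}$-scaled one) reproduces exactly the three entries of the inner $\max\{1,\tfrac{\sigma_*}{\overline{\mu}\sqrt{\varepsilon}},\sqrt{\tfrac{L_{\max}\ln(\cdot)}{d\overline{\mu}}}\}$. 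Requiring $\Delta$ below the minimum of these two constraints yields the stated bound. The last step is $T=N\cdot B$: multiplying the chosen $N$ into each branch of $B$, the logarithmic factor $\tfrac{L_{\max}}{\overline{\mu}}\ln(\cdot)$ cancels the $\ln(\cdot)$ in the first two branches, leaving $T=\mathcal{O}\!\left(\max\{d,\tfrac{d\sigma_*^2}{\varepsilon\overline{\mu}^2},\tfrac{L_{\max}}{\overline{\mu}}\ln(\cdot)\}\right)$.

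I expect the main obstacle to be the bookkeeping for $\Delta$ rather than any individual estimate. Because $\Delta$ enters two of the six terms and the heavier of these involves the product $NB\tau^2$ — where $B$ is itself a maximum of three quantities and $\tau$ is pinned at its ceiling — one has to be disciplined about which regime is active in order to land on the nested $\min$–$\max$ expression in the statement and not merely a weaker bound. A secondary, purely routine point is verifying that distributing the tolerance over the six terms, and the various $\mathcal{O}(1)$ constants coming from Lemmas~\ref{lem:bias-of-gradapprox} and~\ref{lem:bounding-sec-moment-grad-approx} and from the choice $\alpha=\tfrac18$, only perturb hidden constants, so that the $\mathcal{O}(\cdot)$ claims for $N$, $B$, $T$ and the explicit inequalities for $\tau$ and $\Delta$ are unaffected.
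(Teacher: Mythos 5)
Your proposal is correct and follows essentially the same route as the paper's proof: bound each of the six terms of Lemma~\ref{lem:zosseg-convergence-rate} by (a constant times) $\varepsilon$, extract $N$ from the exponential term, $\tau$ from the $NB$-free $\tau^2$ term, $B$ from the two $\nicefrac{1}{NB}$ terms, $\Delta$ from the two $\Delta^2$ terms (with $\tau$, $N$, $B$ substituted, yielding the nested $\min$--$\max$), and finally $T=N\cdot B$ with the logarithms cancelling. No substantive differences from the paper's argument.
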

\begin{proof}
Let's assume that we know  variables $L_{\max}, R_0, \overline{\mu}, \sigma_{*}^2, d$. Let us bound the other variables.

    \begin{eqnarray*}
       \expect{ \| x^N - z^* \| ^ 2} 
        & \lesssim \underbrace{\frac{L_{\max}R_0^2}{\overline{\mu}}\exp \left( -\frac{\overline{\mu}N}{L_{\max}} \right)}_{\circledOne}  + \underbrace{\frac{ d \sigma_{*}^2}{\overline{\mu}^2 N B}}_{\circledTwo} + \underbrace{\frac{d L_{\max}^2 \tau^2}{\overline{\mu}^2 N B}}_{\circledThree}
        \nonumber \\
        & \quad \quad   + \underbrace{\frac{d^2 \Delta^2}{\overline{\mu}^2 N B \tau^2}}_{\circledFour} + \underbrace{\frac{L_{\max}^2 \tau^2}{\overline{\mu}^2 }}_{\circledFive} +  \underbrace{\frac{d^2 \Delta^2}{\overline{\mu}^2 \tau^2}}_{\circledSix} .
        \nonumber 
   \end{eqnarray*}
    \textbf{From term $\circledOne$} we find the number of iterations $N$ required for Algorithm~\ref{alg:zosseg} to achieve $\varepsilon$-accuracy:
    \begin{eqnarray}
        \circledOne: \quad \frac{L_{\max}R_0^2}{\overline{\mu}} \exp \left( -\frac{\overline{\mu}N}{L_{\max}} \right) \leq \varepsilon \quad  &\Rightarrow& \quad N \geq  \frac{L_{\max}}{\overline{\mu}} \ln \left( \frac{R_0^2 L_{\max}}{\varepsilon \overline{\mu}} \right) ; \notag \\
         N &=& \mathcal{O}\left( \frac{L_{\max}}{\overline{\mu}} \ln \left( \frac{R_0^2 L_{\max}}{\varepsilon \overline{\mu}} \right) \right). \label{eq:N-notation}
    \end{eqnarray}

    \textbf{From term $\circledFive$} we find the smoothing parameter $\tau$:
    \begin{eqnarray}
        \circledFive: \quad \frac{L_{\max}^2 \tau^2}{\overline{\mu}^2 } \leq \varepsilon \quad &\Rightarrow& \quad \tau \leq  \frac{\overline{\mu}}{L_{\max}} \sqrt{\varepsilon};
         \label{eq:tau-notation}
    \end{eqnarray}

    \textbf{From terms $\circledTwo$ and $\circledThree$} we find the batch size $B$ required to achieve optimality in iteration complexity~$N$: 
    \begin{eqnarray}
        \circledTwo: \quad \frac{ d \sigma_{*}^2}{\overline{\mu}^2 N B} \leq \varepsilon \quad &\Rightarrow& \quad B \geq \frac{d \sigma_{*}^2}{\varepsilon \overline{\mu}^2 N } 
        \overset{\ref{eq:N-notation}}{=} \mathcal{O}\left( \frac{d \sigma_{*}^2}{\varepsilon \overline{\mu} L_{\max} \ln \left( \frac{R_0^2 L_{\max}}{\varepsilon \overline{\mu}} \right)} \right);
        \nonumber\\
        \circledThree: \quad \frac{d L_{\max}^2 \tau^2}{\overline{\mu}^2 N B} \leq \varepsilon \quad &\Rightarrow& \quad B \geq \frac{d L_{\max}^2 \tau^2}{\varepsilon \overline{\mu}^2 N } 
        \overset{\eqref{eq:N-notation}, \eqref{eq:tau-notation}}{=}  \mathcal{O}\left( \frac{d \overline{\mu}}{L_{\max} \ln \left( \frac{R_0^2 L_{\max}}{\varepsilon \overline{\mu}} \right)} \right);
        \nonumber \\
         B &=& \mathcal{O}\left( \max\left\{ \frac{d \overline{\mu}}{L_{\max} \ln \left( \frac{R_0^2 L_{\max}}{\varepsilon \overline{\mu}} \right)} \max \left\{ \frac{\sigma_{*}^2}{\varepsilon \overline{\mu}^2 } ,  1 \right\}, 1 \right\} \right). \label{eq:B-notation}
    \end{eqnarray}

    \textbf{From the remaining terms $\circledFour$ and $\circledSix$}, we find the maximum allowable level of adversarial noise $\Delta$ that still guarantees the convergence to desired accuracy~$\varepsilon$:
    \begin{eqnarray}
        \circledFour: \quad \frac{d^2 \Delta^2}{\overline{\mu}^2 N B \tau^2} \leq \varepsilon \quad &\Rightarrow& \quad \Delta \leq \frac{\varepsilon \overline{\mu}^2 \sqrt{B N}}{d L_{\max}} \overset{\eqref{eq:N-notation}, \eqref{eq:B-notation}}{=} \max \left\{ \frac{\varepsilon \overline{\mu}^2}{L_{\max} \sqrt{d}}, \frac{\overline{\mu} \sigma_{*}}{L_{\max}} \sqrt{\frac{\varepsilon}{d}}, \frac{\varepsilon \overline{\mu}^2}{d}\sqrt{\frac{\ln \left( \frac{R_0^2 L_{\max}}{\varepsilon \overline{\mu}} \right)}{L_{\max} \overline{\mu}}} \right\};
        \nonumber \\
        \circledSix: \frac{d^2 \Delta^2}{\overline{\mu}^2 \tau^2} \leq \varepsilon \quad &\Rightarrow& \quad \Delta \leq \frac{\overline{\mu} \tau \sqrt{\varepsilon}}{d} \quad \overset{\eqref{eq:tau-notation}}{=} \quad \frac{\varepsilon \overline{\mu}^2}{d L_{\max}};
        \nonumber \\
        \Delta &\leq&  \frac{\varepsilon \overline{\mu}^2}{L_{\max} \sqrt{d}} \min \left\{ \max \left\{ 1 , \frac{ \sigma_{*}}{\overline{\mu} \sqrt{\varepsilon}}, \sqrt{\frac{L_{\max} \ln \left( \frac{R_0^2 L_{\max}}{\varepsilon \overline{\mu}} \right)}{d \overline{\mu}}} \right\}, \frac{1}{\sqrt{d}} \right\}. \nonumber
    \end{eqnarray}

    We can now write $\mathcal{O}$-notation for $T = N \cdot B$:
    \begin{eqnarray*}
         T = N \cdot B = \mathcal{O}\left(\max \left\{ d, \frac{d \sigma_{*}^2}{\varepsilon \overline{\mu}^2}, \frac{L_{\max}}{\overline{\mu}} \ln \left( \frac{R_0^2 L_{\max}}{\varepsilon \overline{\mu}} \right) \right\} \right).
    \end{eqnarray*}
\end{proof}

\subsection{Gradient-free oracle with stochastic noise: missing proofs}
In this section we substitute the deterministic noise in the gradient-free oracle to a stochastic noise with bounded second moment. Instead of stochastic value of the objective function $f$ we consider its true value with adding the mentioned stochastic noise. This will lead to a slight change in the convergence rate of Algorithm~\ref{alg:zosseg} and estimation on the adversarial noise $\Delta$. However, the idea behind the proofs remains exactly the same as in the previous section.
\begin{lemmasec}[Bias of gradient approximation with stochastic noise]\label{lem:bias-of-gradapprox-stoch-noise}
Let Assumption~\ref{as:lipschitzness} hold. Then gradient approximation (see~Definition~\ref{def:gradientfreeapprox-stochnoise}) satisfies the following inequality:
    \begin{eqnarray*}
        \|\Exp\left[ g(z_k, \xi, \xi', e) \right] - \nabla f(z_k) \| &\leq& L_{\xi}\tau,
    \end{eqnarray*}
    where $d = d_x + d_y$,  $\tau$ is a smoothing parameter (see Definition~\ref{def:gradientfreeapprox-stochnoise}) and $\Delta > 0$ is the upper-bound for second moment of bias (see Definition \ref{def:gradientfreeoracle-stochnoise}).
\end{lemmasec}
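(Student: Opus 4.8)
\emph{Sketch of the proof (for the full derivation see the Appendix).}
The plan is to follow the argument of Lemma~\ref{lem:bias-of-gradapprox} almost verbatim, with the single difference that in the stochastic-noise model the two additive perturbations cancel in expectation instead of accumulating into the $\frac{d\Delta}{\tau}$ term. First I would substitute Definition~\ref{def:gradientfreeoracle-stochnoise} into Definition~\ref{def:gradientfreeapprox-stochnoise}, writing
$g(z_k,\xi,\xi',e) = \frac{d}{2\tau}\bigl(f(z_k+\tau e) - f(z_k-\tau e)\bigr)e + \frac{d}{2\tau}(\xi-\xi')e$,
and then take the full expectation over $(\xi,\xi',e)$, splitting it into a ``function part'' and a ``noise part''.

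The key step is the noise part: since $\xi$ and $\xi'$ are i.i.d.\ and independent of $e$, we have $\Exp\bigl[(\xi-\xi')e\bigr] = \bigl(\Exp[\xi]-\Exp[\xi']\bigr)\Exp[e] = 0$, so the stochastic noise contributes nothing to the bias. This is exactly where the present lemma departs from Lemma~\ref{lem:bias-of-gradapprox}: there the deterministic perturbation $\delta(\cdot)$ did not cancel (the two query points use the \emph{same} $\delta$ but at \emph{different} arguments), producing the extra $\frac{d\Delta}{\tau}$, whereas here the symmetric, independent draws annihilate it.

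It then remains to bound $\norms{\Exp\bigl[\tfrac{d}{2\tau}(f(z_k+\tau e) - f(z_k-\tau e))e\bigr] - \nabla f(z_k)}$. Using that the distribution of $e$ on $S^d(1)$ is symmetric, this equals $\norms{\Exp\bigl[\tfrac{d}{\tau}f(z_k+\tau e)e\bigr] - \nabla f(z_k)}$, and by the same version of Stokes' theorem invoked in Lemma~\ref{lem:bias-of-gradapprox} this is $\norms{\Exp[\nabla f(z_k+\tau u)] - \nabla f(z_k)}$ with $u$ uniformly distributed on $B^d(1)$. Passing to the variational form of the Euclidean norm and applying Assumption~\ref{as:lipschitzness} yields $\sup_{v\in S^d(1)}\Exp\bigl[|\nabla_v f(z_k+\tau u) - \nabla_v f(z_k)|\bigr] \le L_\xi\tau\,\Exp[\norms{u}] \le L_\xi\tau$, which is the claimed bound.

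There is essentially no obstacle here; the only point needing care is justifying the decomposition of the expectation into a function part and a noise part and using the independence (and identical distribution) of $\xi,\xi'$ to kill the latter — everything after that is a line-by-line repetition of the computation in Lemma~\ref{lem:bias-of-gradapprox}.
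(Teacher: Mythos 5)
Your proposal is correct and follows essentially the same route as the paper's own proof: the same decomposition of $g(z_k,\xi,\xi',e)$ into a function part plus the noise term $\frac{d}{2\tau}(\xi-\xi')e$, the same use of the independence of $(\xi,\xi')$ from $e$ together with $\xi,\xi'$ being identically distributed to make the noise contribution vanish, and the same symmetry-of-$e$ / Stokes' theorem / variational-norm / Lipschitzness chain giving $L_\xi\tau\,\Exp[\norms{u}]\le L_\xi\tau$. No gaps.
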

\begin{proof}
    Using the variational representation of the Euclidean norm and Definition \ref{def:gradientfreeapprox-stochnoise} we can write:
    \begin{eqnarray*}
        \|\Exp\left[ g(z_k, \xi, \xi', e) \right] - \nabla f(z_k) \| &=&
        \norms{\expect{\frac{d}{2 \tau}\left( \Tilde{f} \left( z_k + \tau e, \xi) - \Tilde{f}(z_k - \tau e, \xi' \right) \right) e} - \nabla f(z_k)} \\
        &\overset{\circledOne}{=}& \norms{\expect{\frac{d}{\tau} f (z_k + \tau e) e} - \nabla f(z_k) + \expect{\frac{d}{2\tau}(\xi - \xi')e}} \\
        & \overset{\circledTwo}{\leq}&  \norms{\expect{\frac{d}{\tau} f(z_k + \tau e) e} - \nabla f(z_k)} + \frac{d}{2\tau}\norms{\expect{(\xi - \xi')e}} \\
        &\overset{\circledThree}{=}& \norms{\expect{\nabla f(z_k + \tau u)} - \nabla f(z_k)} + \frac{d}{2\tau}\norms{\expect{\xi e} - \expect{\xi' e}}\\
        &\overset{\circledFour}{=}&\sup_{v \in S^d(1)} \expect{| \nabla_v f(z_k + \tau u) - \nabla_v f(z_k)|} + \frac{d}{2\tau}\norms{\Exp \xi \Exp e - \Exp \xi' \Exp e} \\
        &\overset{\ref{as:lipschitzness}}{\leq}& L_{\xi} \tau \expect{\norms{u}} \leq L_{\xi} \tau,
    \end{eqnarray*}
    where $u \in B^d(1)$, $\circledOne =$ the equality is obtained from the fact, namely, distribution of $e$ is symmetric,\\ $\circledTwo =$ the common triangle inequality, $\circledThree =$ the equality is obtained from a version of Stokes’ theorem and the fact that $\xi$ and $\xi'$ are i.i.d., $\circledFour =$ the equality obtained from the fact that $\xi$ and $e$ are independent. 
\end{proof}

\begin{lemmasec}[Second moment of gradient approximation with stochastic noise]\label{lem:bounding-sec-moment-grad-approx-stoch-noise}
Let Assumptions~\ref{as:lipschitzness} and \ref{ass:stoch_noise} hold. Then gradient approximation (see~Definition~\ref{def:gradientfreeapprox-stochnoise}) satisfies the following inequality:
    \begin{eqnarray*}
         \expect{\norms{g(z^*, \xi, \xi', e)}^2} &\leq& 4d \sigma^2_* + 4 d L_{\xi}^2 \tau^2 + \frac{2d^2 \Delta^2}{\tau^2}
    \end{eqnarray*}
    where $d = d_x + d_y$, $\tau$ is a smoothing parameter (see Definition~\ref{def:gradientfreeapprox-stochnoise}) and $\Delta > 0$ is the upper-bound for second moment of bias (see Definition \ref{def:gradientfreeoracle-stochnoise}).
\end{lemmasec}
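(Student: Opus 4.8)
The plan is to mirror the proof of Lemma~\ref{lem:bounding-sec-moment-grad-approx}, isolating the new stochastic-noise contribution. First I would invoke Definition~\ref{def:gradientfreeapprox-stochnoise} and use $e\in S^d(1)$, so $\norms{e}=1$, to obtain
\[
\expect{\norms{g(z^*,\xi,\xi',e)}^2} = \frac{d^2}{4\tau^2}\,\expect{\left(\Tilde{f}(z^*+\tau e,\xi) - \Tilde{f}(z^*-\tau e,\xi')\right)^2}.
\]
Substituting the gradient-free oracle $\Tilde{f}(z,\xi)=f(z)+\xi$ from Definition~\ref{def:gradientfreeoracle-stochnoise} rewrites the bracket as $f(z^*+\tau e)-f(z^*-\tau e)+\xi-\xi'$, and then \eqref{eq:a+b} with $n=2$ splits the expectation into a ``smoothing'' term $\tfrac{d^2}{2\tau^2}\expect{\left(f(z^*+\tau e)-f(z^*-\tau e)\right)^2}$ and a ``noise'' term $\tfrac{d^2}{2\tau^2}\expect{(\xi-\xi')^2}$.

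For the smoothing term I would copy the argument of Lemma~\ref{lem:bounding-sec-moment-grad-approx} almost verbatim: apply Wirtinger--Poincar\'e (Lemma~\ref{lem:Wirtinger_Poincare}) to the map $e\mapsto f(z^*+\tau e)-f(z^*-\tau e)$ so as to replace $\expect{\left(f(z^*+\tau e)-f(z^*-\tau e)\right)^2}$ by $\tfrac{\tau^2}{d}\expect{\norms{\nabla f(z^*+\tau e,\xi)+\nabla f(z^*-\tau e,\xi)}^2}$, insert $\pm\,2\nabla f(z^*,\xi)$, use \eqref{eq:a+b}, Assumption~\ref{as:lipschitzness}, and finally Assumption~\ref{ass:stoch_noise} together with $\nabla f(z^*)=0$ (which holds since $F(z^*)=0$). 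This yields the bound $4d\sigma_*^2+4dL_\xi^2\tau^2$ for the smoothing term, exactly as in the deterministic case.

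For the noise term I would use that $\xi$ and $\xi'$ are i.i.d.\ with $\expect{\xi^2}\le\Delta^2$: by \eqref{eq:a+b}, $(\xi-\xi')^2\le 2\xi^2+2(\xi')^2$, hence $\expect{(\xi-\xi')^2}\le 4\Delta^2$ and $\tfrac{d^2}{2\tau^2}\expect{(\xi-\xi')^2}\le \tfrac{2d^2\Delta^2}{\tau^2}$. Summing the two bounds gives the claim. The main obstacle is not analytic depth but bookkeeping: one must be sure to use that $\xi,\xi'$ are independent of the random direction $e$ (and of each other), as emphasised after Definition~\ref{def:gradientfreeapprox-stochnoise}, so that the cross term $\expect{\left(f(z^*+\tau e)-f(z^*-\tau e)\right)(\xi-\xi')}$ causes no harm (in fact it vanishes by symmetry of $e$); otherwise the proof is a line-by-line adaptation of the deterministic-noise lemma with $\delta(z^*\pm\tau e)$ replaced by $\xi,\xi'$ and the constant of the last term changing from $\tfrac{d^2\Delta^2}{\tau^2}$ to $\tfrac{2d^2\Delta^2}{\tau^2}$.
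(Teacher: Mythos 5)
Your proposal is correct and follows essentially the same route as the paper's proof: split via inequality~\eqref{eq:a+b} into the smoothing and noise contributions, bound the former by Wirtinger--Poincar\'e (Lemma~\ref{lem:Wirtinger_Poincare}) with the insertion of $\pm 2\nabla f(z^*)$, Assumption~\ref{as:lipschitzness} and Assumption~\ref{ass:stoch_noise}, and the latter by $\expect{(\xi-\xi')^2}\leq 4\Delta^2$, yielding the $\tfrac{2d^2\Delta^2}{\tau^2}$ term. Your explicit remark that $\nabla f(z^*)=0$ justifies the final use of Assumption~\ref{ass:stoch_noise} is in fact slightly more careful than the paper's write-up.
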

\begin{proof}
    By Definition \ref{def:gradientfreeapprox-stochnoise} and Wirtinger-Poincare (see Lemma \ref{lem:Wirtinger_Poincare}) we have:
    \begin{eqnarray*}
        \expect{\norms{g(z^*, \xi, \xi', e)}^2} &=& \frac{d^2}{4 \tau^2} \expect{\norms{\left(\Tilde{f}(z^* + \tau e, \xi) - \Tilde{f}(z^* - \tau e, \xi')\right) e}^2} \\
         &=& \frac{d^2}{4 \tau^2} \expect{\left(f(z^* + \tau e) - f(z^* - \tau e) + \xi - \xi'\right)^2}\\
        &\overset{\eqref{eq:a+b}}{\leq}& \frac{d^2}{2 \tau^2} \left( \expect{\left(f(z^* + \tau e) - f(z^* - \tau e)\right)^2} + \expect{(\xi - \xi')^2} \right)\\
        &\overset{\ref{lem:Wirtinger_Poincare}, \eqref{eq:a+b}}{\leq}& \frac{d^2}{2 \tau^2} \left( \frac{\tau^2}{d} \expect{\norms{ \nabla f(z^* + \tau e) + \nabla f(z^* - \tau e)}^2} + 2 \expect{ \xi^2} + 2 \expect{ \xi'^2} \right) \\
        &\leq& \frac{d^2}{2 \tau^2} \left( \frac{\tau^2}{d} \expect{\norms{ \nabla f(z^* + \tau e) + \nabla f(z^* - \tau e) \pm 2 \nabla f(z^*)}^2} + 4 \Delta^2 \right) \\
        &\overset{\ref{as:lipschitzness}, \eqref{eq:a+b}}{\leq}& 4d \norms{\nabla f(z^*)}^2 + 4 d  L_{\xi}^2 \tau^2 \Exp{\norms{e}^2}  + \frac{2d^2 \Delta^2}{\tau^2} \\
        &\overset{\circledOne}{\leq}& 4d \sigma^2_* + 4 d  L_{\xi}^2 \tau^2 + \frac{2d^2 \Delta^2}{\tau^2}, 
    \end{eqnarray*}
    where $\circledOne =$ the inequality obtained from Assumption \ref{ass:stoch_noise}.
\end{proof}

\begin{lemmasec}[Proof of Lemma~\ref{lem:zosseg-convergence-rate-stoch-noise}]\label{proof:zosseg-convergence-rate-lemma-stoch-noise}
     Substituting upper bounds on the bias (Lemma \ref{lem:bias-of-gradapprox-stoch-noise}) and second moment (Lemma \ref{lem:bounding-sec-moment-grad-approx-stoch-noise}) for the gradient approximation (see Definition \ref{def:gradientfreeapprox-stochnoise}) in the convergence of the first-order method (Theorem \ref{thm:S-SEG_convergence_rate}).
\end{lemmasec}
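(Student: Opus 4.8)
The plan is to reduce the claim to the biased first-order result already established, exactly as announced in the one-line statement: exhibit the batched randomized estimator of Definition~\ref{def:gradientfreeapprox-stochnoise} as an instance of the biased stochastic oracle of Definition~\ref{def:b_oracle} for the saddle-point operator $F_\xi(z) = [\nabla_x f(x,y,\xi); -\nabla_y f(x,y,\xi)]$, and then invoke the Uniform Sampling specialization of Theorem~\ref{thm:S-SEG_convergence_rate}, i.e.\ Corollary~\ref{cor:S-SEGUS-o-convergence}, with the bias bound $\zeta$ and the second-moment-at-solution quantity $\sigma_{\algname{US}*}^2$ read off from Lemmas~\ref{lem:bias-of-gradapprox-stoch-noise} and~\ref{lem:bounding-sec-moment-grad-approx-stoch-noise}.

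First I would note that $L$-smoothness and strong convexity--concavity of $f(\cdot,\xi)$ make $F_\xi$ satisfy Assumptions~\ref{as:lipschitzness} and~\ref{as:str_monotonicity} with the constants $L_\xi,\mu_\xi$ of Example~\ref{ex:uniform_sampling}, so the hypotheses of Corollary~\ref{cor:S-SEGUS-o-convergence} that concern the operator are met. I would then split $g(z,\xi,\xi',e) = F_\xi(z) + b(z)$, letting $b(z)$ collect the ball-smoothing error and absorbing the mean-zero fluctuation of the finite difference and of the injected noise $\xi,\xi'$ into the stochastic part; Lemma~\ref{lem:bias-of-gradapprox-stoch-noise} then gives $\|b(z)\| \le L_\xi\tau \le L_{\max}\tau$, so one may take $\zeta = L_{\max}\tau$. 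The decisive point here is that, unlike the deterministic perturbation of Section~\ref{subsec:determ_noise}, the stochastic noise contributes no $\frac{d\Delta}{\tau}$ term to the bias, because $\xi,\xi'$ are i.i.d.\ and independent of $e$ so $\Exp[(\xi-\xi')e]=0$. For the variance, Lemma~\ref{lem:bounding-sec-moment-grad-approx-stoch-noise} bounds the single-sample second moment at $z^*$ by $4d\sigma_*^2 + 4dL_\xi^2\tau^2 + \frac{2d^2\Delta^2}{\tau^2}$; averaging the $B$ i.i.d.\ copies in $g_k^B$ multiplies this contribution by $1/B$, and the Uniform-Sampling weighting (hence the $\sup_\xi$ turning $L_\xi$ into $L_{\max}$) yields $\sigma_{\algname{US}*}^2 \lesssim L_{\max}^2\tau^2 + \frac1B\big(d\sigma_*^2 + dL_{\max}^2\tau^2 + \frac{d^2\Delta^2}{\tau^2}\big)$, where the first summand is harmless since after division by $\overline{\mu}^2 N$ it is dominated by the $\zeta^2/\overline{\mu}^2$ term.

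Substituting $\zeta$ and $\sigma_{\algname{US}*}^2$ into the three-term bound of Corollary~\ref{cor:S-SEGUS-o-convergence}: the exponential term $\frac{L_{\max}R_0^2}{\overline{\mu}}\exp(-\overline{\mu}N/L_{\max})$ carries over verbatim; $\frac{\sigma_{\algname{US}*}^2}{\overline{\mu}^2 N}$ expands to $\frac{d\sigma_*^2}{\overline{\mu}^2 NB} + \frac{dL_{\max}^2\tau^2}{\overline{\mu}^2 NB} + \frac{d^2\Delta^2}{\overline{\mu}^2 NB\tau^2}$; and $\frac{\zeta^2}{\overline{\mu}^2}$ becomes $\frac{L_{\max}^2\tau^2}{\overline{\mu}^2}$. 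Collecting the five terms is exactly the asserted rate, and the comparison with Lemma~\ref{lem:zosseg-convergence-rate} becomes transparent: the sixth term $\frac{d^2\Delta^2}{\overline{\mu}^2\tau^2}$ present in the deterministic case is precisely the square of the absent $\frac{d\Delta}{\tau}$ bias contribution. The main obstacle is not any single estimate but the bookkeeping that legitimizes the first step — deciding what plays the role of the deterministic $b(z)$ of Definition~\ref{def:b_oracle} versus the mean-zero stochastic operator, checking that Assumption~\ref{as:unified_assumption_general} is inherited from Theorem~\ref{thm:S-SEG_convergence_rate} after this re-labelling, and propagating the $1/B$ variance reduction through $\sigma_{\algname{AS}}^2$; everything downstream is the mechanical substitution described above.
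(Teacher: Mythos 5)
Your proposal is correct and follows essentially the same route as the paper, whose entire proof of this lemma is precisely the substitution you describe: plug the bias bound ($\zeta = L_{\max}\tau$, with no $\frac{d\Delta}{\tau}$ term thanks to the i.i.d.\ noise) and the batched second-moment bound into the biased \algname{S-SEG} convergence result of Theorem~\ref{thm:S-SEG_convergence_rate} in its Uniform Sampling form (Corollary~\ref{cor:S-SEGUS-o-convergence}). Your writeup is in fact more explicit than the paper's one-line proof about the bookkeeping (what plays the role of $b(z)$, the $1/B$ variance reduction, and why the stochastic case drops the $\frac{d^2\Delta^2}{\overline{\mu}^2\tau^2}$ term), but the argument is the same.
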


\begin{theoremsec}[Proof of Theorem~\ref{th:zosseg-epsilon-acc-stochnoise}]\label{proof:zosseg-epsilon-acc-proof-stoch-noise}
        Let Assumptions \ref{as:lipschitzness} and \ref{as:str_monotonicity} hold and let gradient approximation (see~Definition~\ref{def:gradientfreeapprox}) satisfy Assumption \ref{ass:stoch_noise}, then Zero-Order Same-Sample Stochastic Extragradient with Uniform~Sampling~(see Algorithm~\ref{alg:zosseg}) achieves $\varepsilon$-accuracy: $\expect{ \| x^N - z^* \| ^ 2} \leq \varepsilon$ after: 
        \begin{eqnarray*}
           N &=& \mathcal{O}\left( \frac{L_{\max}}{\overline{\mu}} \ln \left( \frac{R_0^2 L_{\max}}{\varepsilon \overline{\mu}} \right) \right) ;\quad 
           B = \mathcal{O}\left( \max\left\{ \frac{d \overline{\mu}}{L_{\max} \ln \left( \frac{R_0^2 L_{\max}}{\varepsilon \overline{\mu}} \right)} \max \left\{ \frac{\sigma_{*}^2}{\varepsilon \overline{\mu}^2 } ,  1 \right\}, 1 \right\} \right); \\
           \tau &\leq& \frac{\overline{\mu}}{L_{\max}} \sqrt{\varepsilon}; \quad
           \Delta \leq \frac{\varepsilon \overline{\mu}^2}{L_{\max} \sqrt{d}} \max \left\{ 1 , \frac{ \sigma_{*}}{\overline{\mu} \sqrt{\varepsilon}}, \sqrt{\frac{L_{\max} \ln \left( \frac{R_0^2 L_{\max}}{\varepsilon \overline{\mu}} \right)}{d \overline{\mu}}} \right\}.
        \end{eqnarray*}
        We can now write O-notation for $T = N \cdot B$:
        \begin{eqnarray*}
             T = N \cdot B = \mathcal{O}\left(\max \left\{ d, \frac{d \sigma_{*}^2}{\varepsilon \overline{\mu}^2}, \frac{L_{\max}}{\overline{\mu}} \ln \left( \frac{R_0^2 L_{\max}}{\varepsilon \overline{\mu}} \right) \right\} \right).
        \end{eqnarray*}
\end{theoremsec}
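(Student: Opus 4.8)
The plan is to follow the proof of Theorem~\ref{proof:zosseg-epsilon-acc-proof} essentially line for line, the single structural change being that we now start from the convergence guarantee of Lemma~\ref{lem:zosseg-convergence-rate-stoch-noise} rather than Lemma~\ref{lem:zosseg-convergence-rate}; in particular the term $\frac{d^2\Delta^2}{\overline{\mu}^2\tau^2}$ (the ``$\circledSix$'' term of the deterministic analysis, originating from the bias of the approximation) is no longer present, which is exactly why the admissible noise level comes out larger. Concretely, I would split the bound of Lemma~\ref{lem:zosseg-convergence-rate-stoch-noise} into the five summands
\[
\circledOne = \frac{L_{\max}R_0^2}{\overline{\mu}}\exp\!\Bigl(-\tfrac{\overline{\mu}N}{L_{\max}}\Bigr),\ \ \circledTwo = \frac{d\sigma_{*}^2}{\overline{\mu}^2 NB},\ \ \circledThree = \frac{dL_{\max}^2\tau^2}{\overline{\mu}^2 NB},\ \ \circledFour = \frac{d^2\Delta^2}{\overline{\mu}^2 NB\tau^2},\ \ \circledFive = \frac{L_{\max}^2\tau^2}{\overline{\mu}^2},
\]
and require each of the five to be at most a constant multiple of $\varepsilon$ (splitting the hidden constant of $\lesssim$ across the five terms, say each $\le \varepsilon/5$; this affects only absolute constants).

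From $\circledOne \le \varepsilon$ one reads off $N \ge \frac{L_{\max}}{\overline{\mu}}\ln\!\bigl(\frac{R_0^2 L_{\max}}{\varepsilon\overline{\mu}}\bigr)$, i.e.\ $N = \mathcal{O}\!\bigl(\frac{L_{\max}}{\overline{\mu}}\ln(\frac{R_0^2 L_{\max}}{\varepsilon\overline{\mu}})\bigr)$. From $\circledFive \le \varepsilon$ one gets $\tau \le \frac{\overline{\mu}}{L_{\max}}\sqrt{\varepsilon}$, and I would fix $\tau$ at this value for all subsequent substitutions. The two ``batch'' terms then each yield a lower bound on $B$: $\circledTwo$ gives $B \gtrsim \frac{d\sigma_{*}^2}{\varepsilon\overline{\mu}^2 N} = \mathcal{O}\!\bigl(\frac{d\sigma_{*}^2}{\varepsilon\overline{\mu}L_{\max}\ln(\cdot)}\bigr)$ after inserting $N$, while $\circledThree$ gives $B \gtrsim \frac{dL_{\max}^2\tau^2}{\varepsilon\overline{\mu}^2 N} = \mathcal{O}\!\bigl(\frac{d\overline{\mu}}{L_{\max}\ln(\cdot)}\bigr)$ after inserting $N$ and $\tau$; taking the maximum of the two (and of $1$) produces the announced $B = \mathcal{O}\!\bigl(\max\{\frac{d\overline{\mu}}{L_{\max}\ln(\cdot)}\max\{\frac{\sigma_{*}^2}{\varepsilon\overline{\mu}^2},1\},1\}\bigr)$.

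It remains to handle $\circledFour$. Rearranging $\circledFour \le \varepsilon$ and substituting the fixed $\tau$ gives $\Delta \le \frac{\varepsilon\overline{\mu}^2\sqrt{NB}}{dL_{\max}}$; inserting $N$ together with the three branches of $B$ makes $\sqrt{NB}$ equal to $\sqrt d$, to $\frac{\sqrt d\,\sigma_{*}}{\sqrt{\varepsilon}\,\overline{\mu}}$, or to $\sqrt{\frac{L_{\max}\ln(\cdot)}{\overline{\mu}}}$, respectively, whence
\[
\Delta \le \frac{\varepsilon\overline{\mu}^2}{L_{\max}\sqrt d}\max\!\Bigl\{1,\ \tfrac{\sigma_{*}}{\overline{\mu}\sqrt{\varepsilon}},\ \sqrt{\tfrac{L_{\max}\ln(R_0^2 L_{\max}/(\varepsilon\overline{\mu}))}{d\overline{\mu}}}\Bigr\}.
\]
This is exactly the deterministic bound of Theorem~\ref{proof:zosseg-epsilon-acc-proof} with its outer $\min\{\cdot,1/\sqrt d\}$ deleted --- that factor was precisely the contribution of $\circledSix$, which is now absent. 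Finally $T = N\cdot B$, and since $NB$ equals $d$, $\frac{d\sigma_{*}^2}{\varepsilon\overline{\mu}^2}$, or $N$ in the three branches, we obtain $T = \mathcal{O}\!\bigl(\max\{d,\ \frac{d\sigma_{*}^2}{\varepsilon\overline{\mu}^2},\ \frac{L_{\max}}{\overline{\mu}}\ln(\frac{R_0^2 L_{\max}}{\varepsilon\overline{\mu}})\}\bigr)$, as claimed.

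The whole argument is elementary once Lemma~\ref{lem:zosseg-convergence-rate-stoch-noise} is available (and that lemma is itself obtained by feeding the bias bound of Lemma~\ref{lem:bias-of-gradapprox-stoch-noise} and the second-moment bound of Lemma~\ref{lem:bounding-sec-moment-grad-approx-stoch-noise} into Theorem~\ref{thm:S-SEG_convergence_rate}). The only place that demands real care --- and hence the ``hard part'' --- is the final step: one must track which branch of the nested $\max$ defining $B$ is active and carry the matching contribution consistently through $\sqrt{NB}$ into both $\Delta$ and $T$, so that the three-way maxima in the concluding statements line up; no blow-up or delicate estimate is involved beyond this bookkeeping.
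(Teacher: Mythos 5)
Your proposal is correct and is essentially the paper's own argument: the paper's proof of this statement simply says that Lemma~\ref{lem:zosseg-convergence-rate-stoch-noise} differs from Lemma~\ref{lem:zosseg-convergence-rate} only by the absence of the last term $\frac{d^2\Delta^2}{\overline{\mu}^2\tau^2}$, so one repeats the derivation of Theorem~\ref{proof:zosseg-epsilon-acc-proof} term by term, with the sole change that the $\min\{\cdot,1/\sqrt d\}$ in the bound on $\Delta$ disappears. You spell out this bookkeeping (the five terms, the branches of $B$ feeding into $\sqrt{NB}$, $\Delta$, and $T$) in more detail than the paper does, but the route is the same.
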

\begin{proof}
As we can see from Lemma~\ref{lem:zosseg-convergence-rate-stoch-noise} convergence rate for Algorithm~\ref{alg:zosseg} with gradient approximation with stochastic noise (see Definition~\ref{def:gradientfreeapprox-stochnoise}) differs from results of Lemma~\ref{lem:zosseg-convergence-rate} only with the abundance of the last term. Therefore, the proof of this theorem remains the same as the proof of Theorem~\ref{th:zosseg-epsilon-acc}, except for the estimation for parameter $\Delta$ which now has less equations, thus leading to abundance of the minimizing condition. 
\end{proof}

\newpage
\subsection{Experiments: parameters fitting}\label{app:experiments}

In this subsection we present graphs for our algorithm with different values of batch size $B$. For the deterministic noise we use the following function: $\delta(z) = \frac{\Delta}{1 + \|z\|}$. For the stochastic noise we use Standard Normal distribution multiplied by $\Delta$.
We optimize $f(x, y)$ \eqref{eq:experiments_problems} with parameters: $d_x = 64, d_y = 64$ (dimensional of problem), $n = 32$
(number of functions in a finite sum), $\tau = 1$
(smoothing parameter), $\gamma = 0.05$ (step size), $\Delta = 0.001$.

\begin{figure}[htbp]
    \centering
    \begin{minipage}{0.48\linewidth}
        \centering
        \includegraphics[width=\linewidth, height=7cm]{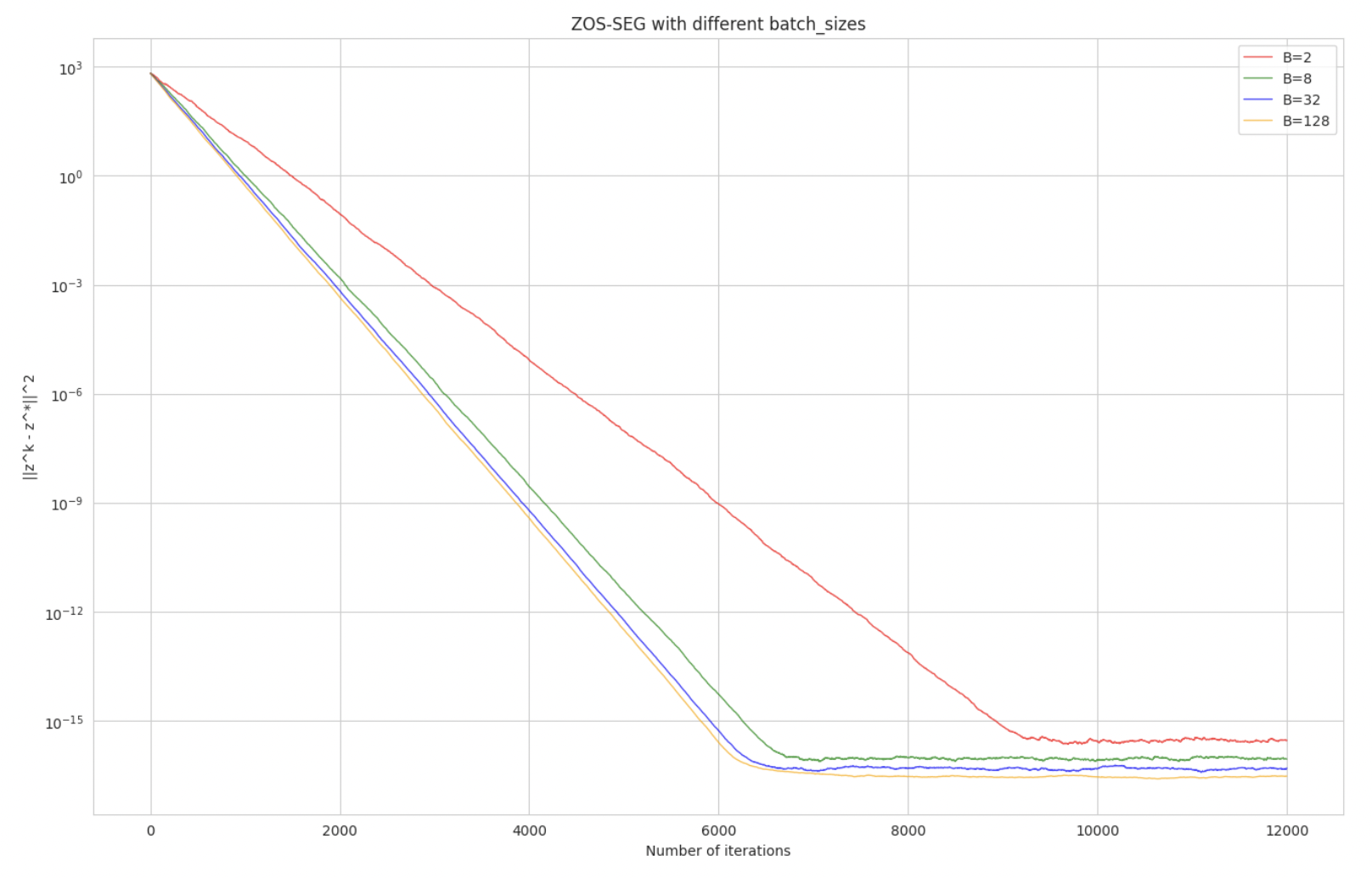}
    \end{minipage}
    \hfill 
    \begin{minipage}{0.48\linewidth}
        \centering
        \includegraphics[width=\linewidth, height=7cm]{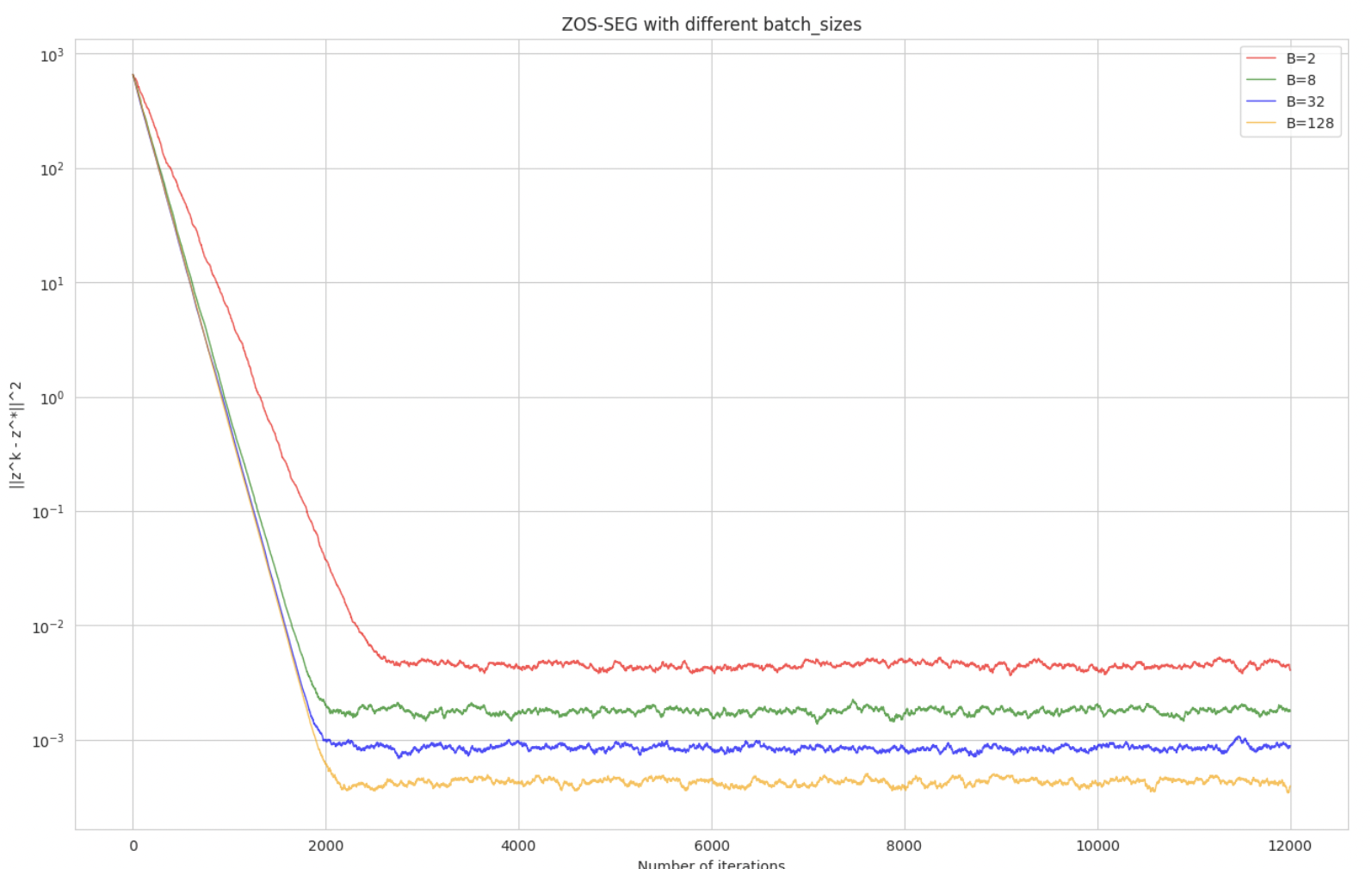}
    \end{minipage}
    \caption{Convergence of the Zero-Order~Same-sample~Stochastic~Extragradient (\algname{ZOS-SEG}) algorithm for different batch sizes $B$: deterministic and stochastic noise setups accordingly.}
    \label{fig:both_noises}
\end{figure}

\end{document}